\newtheorem{thm}{thm}[section]
\newtheorem{theorem}{Theorem}
\newtheorem{lemma}[thm]{Lemma}
\newtheorem{proposition}[thm]{Proposition}
\newtheorem{corollary}[thm]{Corollary}
\newtheorem{nono-theorem}{Theorem}[]
\newtheorem*{theorem-nonum}{Theorem}
\newtheorem{fact}{Fact}
\newtheorem{claim}[thm]{Claim}
\theoremstyle{definition}
\newtheorem*{definition*}{Definition}
\newcommand{\Z}{{\mathbb Z}}
\newcommand{\R}{{\mathbb R}}
\newcommand{\N}{{\mathbb N}}
\newcommand{\W}{{\mathbb{W}}}
\newcommand{\E}{\mathbb{E}}
\newcommand\B{{\mathcal B}}        
\newcommand\garbage[1]{}
\renewcommand{\P}{{\mathbb P}}
\newcommand{\aS}{S_\alpha S}
\begin{document}
\author{Zemer Kosloff}
\thanks{The research of Z.K. was partially supported by ISF grant No. 1570/17}
\address{Einstein Institute of Mathematics,
	Hebrew University of Jerusalem, Edmond J. Safra Campus, Jerusalem 91904,
	Israel}
\email{zemer.kosloff@mail.huji.ac.il}

\author{Dalibor Voln\'y}
\address{Laboratoire de Math\'{e}matiques Raphael Salem,
	UMR 6085, Universit\'e de Rouen Normandie, France}
\email{dalibor.volny@univ-rouen.fr}

	\title{Stable Functional CLT for deterministic systems}


		\keywords{Dynamical systems, stable processes, limit theorems, weak convergence}
		\subjclass[2010]{60G10, 28D05, 37A05; 37A50, 60F05, 60E07}
	
\maketitle

\begin{abstract}
We show that $\alpha$ stable L\'evy motions can be simulated by any ergodic and aperiodic probability preserving 
transformation. Namely we show: 

- for $0<\alpha<1$ and every $\alpha$ stable L\'evy motion $\W$, there exists a function $f$ whose partial sum process converges in distribution to $\W$. 

- for $1\leq \alpha <2$ and every symmetric $\alpha$ stable L\'evy motion, there exists a function $f$ whose partial sum process converges in distribution to $\W$,

- for $1< \alpha <2$ and every $-1\leq\beta \leq 1$ there exists a function $f$ whose  associated time series is in the classical domain of attraction of an $S_\alpha (\ln(2), \beta,0)$
random variable.

	\end{abstract}

\section{introduction}
Thouvenot and Weiss showed in \cite{MR2887924} that for every $(X,\B,m,T)$ an aperiodic, probability preserving system and a random variable $Y$, there exists a function $f:X\to\R$ and a sequence $a_n\to\infty$ such that 
\[
\frac{1}{a_n}\sum_{k=0}^{n-1}f\circ T^k\ \ \text{converges in distribution to}\ \ Y.
\]
This result means that any distribution can be approximated by observations of an aperiodic, probability preserving system. See also  \cite{MR3795070} for a refinement of this distributional convergence result for positive random variables and the subsequent \cite{MR3896837} which is concerned with the possible growth rate of the normalizing constants $a_n$. 

A natural question which arises is given a stochastic process $Y=(Y(t)) _{t\in\R}$ whose sample paths are in a Polish space $\mathcal{D}$ can we \textbf{simulate} it using our dynamical system. That is does there exist a measurable function $f:X\to\R$ and normalizing constants $a_n$ and $b_n$ such that the processes $Y_n:X\to \mathcal{D}$ defined by $Y_n(t)(x)=\frac{1}{a_n}\left(\sum_{k=0}^{[nt]}f\circ T^k(x)-b_{[nt]}\right)$ converge in distribution to $Y$.

As noted by Gou\"ezel in \cite{MR3896837}, by a famous result of Lamperti (see \cite[Theorem 8.5.3.]{MR1015093}), any process $Y$ which can be simulated in this manner must be self-similar and the normalizing constants need to be of the form $a_n=n^{\alpha}L(n)$ with $L(n)$ a slowly varying function and $\alpha$ is the self similarity index of the process.  Perhaps due to this, results about simulation of processes are rather scarce; to the best of our knowledge the only such result is \cite{MR1624218}, where the second author has answered a question  of Burton and Denker \cite{MR891642} and showed that every aperiodic, probability preserving system can simulate a Brownian motion with classical normalizing constants $a_n=\sqrt{n}$.  

An important subclass of self-similar process is the class of $\alpha$-stable L\'evy motions which we describe in the next subsection. They include the Brownian motion $(\alpha=2)$ and the Cauchy-L\'evy motion ($\alpha=1$) which is a process with independent increments which are Cauchy-distributed and are often used to model heavy tailed phenomena. 

In this work we show that given an aperiodic, ergodic, probability preserving transformation $(X,\B,m,T)$:
\begin{itemize}
	\item Every $\alpha$-stable L\'evy motion with $\alpha\in(0,1)$ can be simulated by this transformation. 
	\item Every symmetric $\alpha$-stable L\'evy motion can be simulated using this transformation.  
\end{itemize}
One may ask what about general $\alpha$-stable L\'evy motions when $\alpha\in[1,2)$. In this regard we extend the results of \cite{2022arXiv221103448K} and show a classical CLT result for any alpha-stable distribution when $\alpha\neq 1$. 
\subsection{Definitions and statement of the theorems}

A random variable $Y$ is \textbf{stable} if there exists a sequence $Z_1,Z_2,\ldots$ of i.i.d. random variables and sequences $a_n,b_n$ such that
\[
\frac{\sum_{k=1}^n Z_k-a_n}{b_n},\ \text{converges in distribution to} \ Y,\ \text{as}\ n\to\infty. 
\] 
In other words, $Y$ arises as a distributional limit of a central limit theorem, see \cite{MR0322926}. Furthermore in this case, $b_n$ is regularly varying of index $\frac{1}{\alpha}$ which implies that $b_n=n^{1/\alpha}L(n)$ where $L(n)$ is a slowly varying function.  A stable distribution is uniquely defined by its characteristic function (Fourier transform). Namely a random variable is $\alpha$-stable, $0<\alpha\leq 2$, if there exists $\sigma>0$, $\beta\in[-1,1]$ and $\mu\in\R$ such that for all $\theta\in\R$. 
\[
\mathbb{E}(\exp(i\theta Y))=\begin{cases}
	\exp\left\{\left(-\sigma^\alpha|\theta|^\alpha(1-i\beta(\text{sign}(\theta)\tan(\frac{\pi\alpha}{2})\right)+i\mu\theta\right\}, &\alpha\neq 1,\\
	\exp\left\{\left(-\sigma^\alpha|\theta|^\alpha(1+\frac{i\beta}{2}(\text{sign}(\theta)\ln (\theta)\right)+i\mu\theta\right\}, & \alpha=1.
\end{cases}
\]
The constant $\sigma>0$ is the dispersion parameter and $\beta$ is the skewness parameter. In this case we will say that $Y$ is an alpha stable random variable with dispersion parameter $\sigma$, skewness parameter $\beta$ and shift parameter $\mu$, or in short $Y$ is \textbf{an $S_\alpha(\sigma,\beta,\mu)$ random variable}.  If $\mu=\beta=0$ and $\sigma>0$ then the random variable is \textbf{symmetric $\alpha$ stable} and we will abbreviate $Y$ is $S\alpha S(\sigma)$.

A \textbf{probability preserving dynamical system} is a quadruplet $(\mathcal{X},\B,m,T)$ where $(\mathcal{X},\B,m)$ is a standard probability space, $T$ is a measurable self map of $X$ and $m\circ T^{-1}=m$. The system is \textbf{aperiodic} if the collection of all periodic points is a null set.  It is \textbf{ergodic} if every $T$-invariant set is either a null or a co-null set. Given a function $f:X\to \R$ we write $S_n(f):=\sum_{k=0}^{n-1}f\circ T^k$ for the corresponding random walk. 

Recall that if $Y_n$ and $Y$ are random variables taking values in a Polish space $\mathbb{X}$, then $Y_n$ converges to $Y$ in distribution if for every continuous function $G:\mathbb{X}\to\R$,
\[
\lim_{n\to\infty}\E\left(G\left(Y_n\right)\right)=\E(G(Y)). 
\]
Here $\E$ denotes the expectation with respect to the relevant probability measure of the space on which the random variable is defined on. 
\begin{theorem-nonum}(See Theorem \ref{thm: CLT a>1})
For every ergodic and aperiodic probability preserving system $(\mathcal{X},\B,m,T)$, $\alpha>1$ and $\beta\in[-1,1]$, there exists a function $f:X\to\R$ and $B_n\to\infty$ such that
\[
\frac{S_n(f)+B_n}{n^{1/\alpha}}\ \ \text{converges in distribution to} \ S_\alpha(\sqrt[\alpha]{\ln(2)},\beta,0).
\]
\end{theorem-nonum}

A process $\W=\big(\W_s\big)_{s\in [0,1]}$ is an $S_\alpha(\sigma,\beta,0)$ \textbf{L\'evy motion} if it has independent increments and for all $0\leq s<t\leq 1$, $\W_t-\W_s$ is $S_\alpha(\sigma\sqrt[\alpha]{t-s},\beta,0)$ distributed. The existence of an $S_\alpha(\sigma,\beta,0)$ stable motion can be demonstrated via a functional CLT  (also called a weak invariance principle), the details described below appear in \cite{MR2271424}. 

Consider the vector space $D([0,1])$ of functions $f:[0,1]\to\R$ which are right-continuous with left limits, also known as Cadlag functions. Equipped with the Skorohod $J_1$ topology, $D([0,1])$ is a Polish space. Now a natural construction of a distribution on $D([0,1])$ is to take $X_1,X_2,\ldots $, an i.i.d. sequence of random variables and $a_n>0$ and define a $D([0,1])$ valued random variable $\W_n$ via 
\[
\W_n(t)=a_nS_{[nt]}(X)
\]
where $S_n(X):=\sum_{k=1}^nX_k$ and $[\cdot]$ is the floor function. By \cite[Corollary 7.1.]{MR2271424}, if $X_i$ are $S_\alpha(\sigma,\beta,0)$ and $a_n=n^{1/\alpha}$, then $\W_n$ converge in distribution (as random variables on the Polish space $D([0,1])$ with the $J_1$ topology), its limit being an $S_\alpha(\sigma,\beta,0)$ L\'evy motion. The main result of this work is such functional CLT results in the setting of dynamical systems.

\begin{theorem-nonum}
Let  $(\mathcal{X},\B,m,T)$ be an ergodic and aperiodic probability preserving system.
\begin{itemize}
	\item[(Thm \ref{thm: main a<1})] For every $\alpha\in (0,1)$, $\sigma>0$ and $\beta\in[-1,1]$, there exists $f:X\to\R$ such that $\W_n(f)(t):=\frac{1}{n^{1/\alpha}}S_{[nt]}(f)$ converges in distribution to an $S_\alpha(\sigma,\beta,0)$ L\'evy motion. 
	
	\item[(Thm \ref{thm: a>1})] For every $\alpha\in [1,2)$ and $\sigma>0$, there exists $f:X\to\R$ such that $\W_n(f)(t):=\frac{1}{n^{1/\alpha}}S_{[nt]}(f)$ converges in distribution to a $\aS(\sigma)$ L\'evy motion. 
	
\end{itemize} 
\end{theorem-nonum} 

We remark that while the results in Theorems \ref{thm: main a<1} provide a function $f$ whose partial sum process $\W_n(f)$ converges to an $S_\alpha(\sqrt[\alpha]{\ln(2)},\beta,0)$ L\'evy motion, the scaling property of $\alpha$-stable distribution gives that writing $c:=\frac{\sigma}{\sqrt[\alpha]{\ln(2)}}$,  $\W_n(cf)$ converges to an  
$S_\alpha(\sigma,\beta,0)$ L\'evy motion. A similar remark is true with regards to Theorem \ref{thm: a>1}.

\subsection*{Notation} Here and throughout $\log(x)$ denotes the logarithm of $x$ are in base 2, and similarly $\ln(x)$ is the natural logarithm of $x$.

Given two non-negative sequences $a_n$  and $b_n$ we write $a_n\lesssim b_n$ if there exists $C>0$ such that $a_n\leq Cb_n$ for all $n\in\N$ and if in addition $b_n>0$ for all $n$ we write $a_n\sim b_n$ if $\lim_{n\to\infty}\frac{a_n}{b_n}=1$, 

For a function $f:X\to\R$ and $p>0$, $\|f\|_p:=\left(\int |f|^pdm\right)^{1/p}$. 
\section{Construction of the function}
\subsection{Target distributions} \label{sub:target triangular array}
Let $(\Omega,\mathcal{F},\P)$ be a probability space. Let $\left\{X_k(m):\ k,m\in\N\right\}$ be independent random variables so that for every $k\in\N$, $X_k(1),X_k(2),X_k(3),\ldots$ are i.i.d. $S_\alpha\left(\sigma_k,1,0\right)$ random variables with $\sigma_k^\alpha=\frac{1}{k}$. 

For every $k,m\in\N$, define $Y_k(m)=X_k(m)1_{\left[2^k\leq X_k(m)\leq 4^k\right]}$ and its discretisation on a grid of scale $4^{-k}$ defined by 
\[
Z_k(m)=\sum_{j=2^k4^k}^{4^{2k}} \left(\frac{j}{4^k}\right)1_{\left[\frac{j}{4^{k}}\leq Y_k(m)<\frac{j+1}{4^k}\right]}.
\] 
The following fact easily follows from the definitions.

\begin{fact} \label{fact:1}
For every $k\in\N$, $Z_k(1),Z_k(2),\ldots$ are i.i.d. random variables supported on the finite set $\left\{2^k,2^k+4^{-k},\ldots,4^k\right\}$ and for all $m\in\N$, $$0\leq Y_k(m)-Z_k(m)<4^{-k}.$$
\end{fact}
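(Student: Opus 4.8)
The plan is to observe that, for each fixed $k$, every $Z_k(m)$ is the image of $X_k(m)$ under one and the same Borel map, and then to read both assertions off the explicit form of that map. Concretely, let $g_k\colon\R\to\R$ be the composition of $x\mapsto x\,\mathbf{1}_{[2^k,4^k]}(x)$ with $y\mapsto\sum_{j=2^k4^k}^{4^{2k}}\tfrac{j}{4^k}\,\mathbf{1}_{[\,j/4^k,\,(j+1)/4^k)}(y)$, so that $Z_k(m)=g_k(X_k(m))$ for all $m$. Since $X_k(1),X_k(2),\dots$ are i.i.d., their images $Z_k(1),Z_k(2),\dots$ under the single measurable map $g_k$ are again i.i.d.; this disposes of the i.i.d.\ assertion with no further work.

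For the support assertion I would use that the half-open intervals $[\,j/4^k,\,(j+1)/4^k)$, $j\in\Z$, are pairwise disjoint, so at most one indicator in the sum defining $Z_k(m)$ is nonzero. Hence $Z_k(m)=j/4^k$ whenever $Y_k(m)$ falls in the $j$-th interval for some $j$ in the summation range $\{2^k4^k,\dots,4^{2k}\}$, and $Z_k(m)=0$ otherwise. As $2^k4^k/4^k=2^k$ and $4^{2k}/4^k=4^k$, the set of attainable nonzero values is precisely $\{2^k,2^k+4^{-k},\dots,4^k\}$, so $Z_k(m)$ takes values in the finite set $\{0\}\cup\{2^k,2^k+4^{-k},\dots,4^k\}$.

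For the bound $0\le Y_k(m)-Z_k(m)<4^{-k}$ I would split into two cases. If $X_k(m)\notin[2^k,4^k]$ then $Y_k(m)=0$, and since $0$ lies in no interval $[\,j/4^k,(j+1)/4^k)$ with $j\ge 2^k4^k\ge 1$, all indicators vanish and $Z_k(m)=0=Y_k(m)$. If $X_k(m)\in[2^k,4^k]$, then $Y_k(m)=X_k(m)\in[2^k,4^k]$, hence $4^kY_k(m)\in[2^k4^k,4^{2k}]$ and $j^\ast:=\lfloor 4^kY_k(m)\rfloor$ is an integer lying in the summation range; thus $Y_k(m)\in[\,j^\ast/4^k,(j^\ast+1)/4^k)$, exactly this one indicator equals $1$, and $Z_k(m)=j^\ast/4^k$. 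Then $Y_k(m)-Z_k(m)=\bigl(4^kY_k(m)-\lfloor 4^kY_k(m)\rfloor\bigr)4^{-k}\in[0,4^{-k})$, which is the claim.

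There is no genuine obstacle here: the fact is a bookkeeping consequence of the definitions. The only two points that warrant a sentence of care are the boundary cases in the last step — that the value $Y_k(m)=0$ is not inadvertently assigned a nonzero grid value (this is where $2^k4^k\ge 1$, i.e.\ $k\ge 1$, enters) and that the right endpoint $Y_k(m)=4^k$ is still captured, by the top index $j=4^{2k}$, since $4^k\in[4^k,4^k+4^{-k})$.
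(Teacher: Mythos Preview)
Your proof is correct and is precisely the intended argument: the paper gives no proof beyond the remark that the fact ``easily follows from the definitions,'' and your careful unpacking of the map $g_k$ is exactly what that remark points to. You are in fact slightly more careful than the statement itself, correctly noting that $Z_k(m)$ also takes the value $0$ (when $X_k(m)\notin[2^k,4^k]$), so the support is $\{0\}\cup\{2^k,2^k+4^{-k},\ldots,4^k\}$; the paper's phrasing suppresses this, but your version is the accurate one and the boundary checks you flag are handled correctly.
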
 
The construction of the cocycle will hinge on realizing a triangular array of the $Z$ random variables in a dynamical system. 

\subsection{Construction of the function}
Let $(\mathcal{X},\B,m,T)$ be an ergodic, aperiodic, probability preserving system. We first recall some definitions and the copying lemma of \cite{MR4374685} and its application as in \cite{2022arXiv221103448K}.

A finite partition of $\mathcal{X}$ is \textbf{measurable} if all of its pieces (atoms) are Borel-measurable. 
Recall that a finite sequence of random variables $X_1,\ldots,X_n:\mathcal{X}\to \R$, each taking finitely many of values, is \textbf{independent of a finite partition} $\mathcal{P}=(P)_{P\in \mathcal{P}}$ if for all $s\in \R^n$ and $P\in\mathcal{P}$,
\[
m\left(\left(X_j\right)_{j=1}^n=s|P\right)=m\left(\left(X_j\right)_{j=1}^n=s\right).
\]
We will embed the triangular array using the following key proposition. 
\begin{proposition}\cite[Proposition 2]{MR4374685}\label{prop:KV20} Let $(\mathcal{X},\B,m,T)$ be an aperiodic, ergodic, probability preserving transformation and $\mathcal{P}$ a finite-measurable partition of $\mathcal{X}$. For every finite set $A$ and $U_1,U_2,\ldots,U_n$ an i.i.d. sequence of $A$ valued random variables, there exists $f:\mathcal{X}\to A$ such that $(f\circ T^j)_{j=0}^{n-1}$ is distributed as $(U_j)_{j=1}^n$ and $(f\circ T^j)_{j=0}^{n-1}$ is independent of $\mathcal{P}$. 
\end{proposition}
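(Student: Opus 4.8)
The plan is to realise the i.i.d.\ string $(U_1,\dots,U_n)$ along $T$-orbits by representing $\mathcal{X}$ as a tower with \emph{no} residual part and then labelling its columns by independent i.i.d.\ words. Write $p$ for the common law of the $U_i$ on $A$ and $p^{\otimes n}$ for the law of $(U_1,\dots,U_n)$ on $A^n$. Since $T$ is aperiodic the measure $m$ is non-atomic, so every measurable set may be split into measurable pieces of prescribed measures and, more generally, refined so as to become independent of any prescribed finite partition; this elementary fact is the only "input" beyond the tower machinery.

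First I would fix a large integer $N>2n$ and apply Alpern's multitower (Rokhlin) lemma to the aperiodic transformation $T$ with the coprime heights $N$ and $N+1$, obtaining measurable base sets $D_N,D_{N+1}$ so that, up to a null set,
\[
\mathcal{X}=\bigsqcup_{h\in\{N,N+1\}}\ \bigsqcup_{i=0}^{h-1}T^{i}D_h,
\]
with the floors $T^iD_h$ pairwise disjoint, $T$ carrying each floor onto the next one, and the roof $\bigsqcup_h T^{h-1}D_h$ mapped bijectively back onto $D:=D_N\sqcup D_{N+1}$. Let $S$ be the induced first-return map of $D$ (so $S=T^h$ on $D_h$), an invertible, $m|_D$-preserving, aperiodic transformation. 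Then I would attach to each $y\in D_h$ a word $w(y)\in A^{h}$, written as a concatenation of a head $H(y)\in A^{n-1}$, a middle $M(y)\in A^{h-2(n-1)}$ and a tail $\Theta(y)\in A^{n-1}$, and define $f$ by $f:=w(y)_i$ on $T^iD_h$ (and arbitrarily on the exceptional null set). The word map would be produced by successive non-atomic splittings of $D$: choose the heads with conditional law $p^{\otimes(n-1)}$ and independent of the finite partition $\mathcal{R}$ generated by the sets $D_h\cap T^{-i}P$ ($0\le i<N+1$, $P\in\mathcal{P}$), their $S$-images, and the column-height partition $\{D_N,D_{N+1}\}$; then the middles with the complementary product law and independent of the heads and of $\mathcal{R}$; then the tails with law $p^{\otimes(n-1)}$ and independent of the heads, of the middles, of the head of the \emph{next} column $y\mapsto H(Sy)$, and of $\mathcal{R}$. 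In this way each column word $w|_{D_h}$ has conditional law $p^{\otimes h}$, the tail of a column is jointly i.i.d.-$p$ with the head of the next column, and every event built from these data is independent of $\mathcal{P}$.

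To finish, fix $s\in A^n$ and $P\in\mathcal{P}$ and decompose $\{x:\ f(T^jx)=s_j,\ 0\le j<n\}\cap P$ over the floor $T^iD_h$ containing $x$. When $i+n\le h$ the window $f(x),\dots,f(T^{n-1}x)$ lies inside the single word $w(y)$, so its conditional probability on that floor is $p^{\otimes n}(s)$; when $i+n>h$ the window is a suffix of the tail $\Theta(y)$ followed by a prefix of the next head $H(Sy)$, which is again an i.i.d.-$p$ string of length $n$, since any contiguous block of a concatenation of independent i.i.d.-$p$ strings is i.i.d.-$p$. In both cases the relevant event is independent of $D_h\cap T^{-i}P$ by the construction of $w$, so each floor contributes $p^{\otimes n}(s)\,m\big(T^iD_h\cap P\big)$; summing over all floors and using $\sum_{h,i}m(T^iD_h\cap P)=m(P)$ gives $m\big(\{x:\ f(T^jx)=s_j\ \forall j<n\}\cap P\big)=p^{\otimes n}(s)\,m(P)$, which is exactly the two assertions of the proposition.

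The step I expect to be the real obstacle is obtaining \emph{exact}, not merely approximate, identities: a single Rokhlin tower leaves a residual set on which $f$ is uncontrolled, so one must tile $\mathcal{X}$ exactly (hence Alpern rather than plain Rokhlin), and the labelling must then be arranged so that three requirements hold simultaneously and on the nose --- column words exactly i.i.d.-$p$, consecutive columns exactly jointly independent so that windows straddling a roof remain i.i.d.-$p$, and everything exactly independent of $\mathcal{P}$. The careful ordering of the non-atomic splittings, and in particular making the tail of each column independent of the head of the next column even though the return map $S$ couples them, is where essentially all the work goes; once it is in place, the verification is just the summation over floors above.
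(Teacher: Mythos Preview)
The paper does not prove this proposition; it is quoted from \cite[Proposition~2]{MR4374685} and used as a black box, so there is no in-paper argument to compare against. Your plan via Alpern's two-tower lemma with coprime heights is the standard route to such \emph{exact} i.i.d.\ embeddings and is essentially how the result is obtained in the cited source: tile $\mathcal{X}$ with no residual set, split each column word into head/middle/tail of lengths $n-1,\ h-2(n-1),\ n-1$ so that every length-$n$ window is either interior to a column or a tail--head concatenation across a roof, and order the non-atomic splittings so that tails are independent of the next head and all labels are independent of the pullback of $\mathcal{P}$ to the base. The plan is sound. One detail worth making explicit when you write it up: in the straddling case you need $(\Theta(y),H(Sy))$ \emph{jointly} independent of the base event $D_h\cap T^{-i}P$, and since ``$H\circ S$ independent of $Q$'' is equivalent to ``$H$ independent of $SQ$'', this is precisely why the $S$-images must be included in your partition $\mathcal{R}$, as you do.
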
 
Using this we deduce the following.

\begin{corollary}\label{cor: weak Sinai}
Let $(\mathcal{X},\B,m,T)$ be an aperiodic, ergodic, probability preserving transformation and $\left(Z_k(j)\right)_{\{k\in\N,1\leq j\leq 4^{k^2} \}}$ be the triangular array from subsection \ref{sub:target triangular array}. There exist functions $f_k,g_k:\mathcal{X}\to\mathbb{R}$ such that $\left(f_k\circ T^{j-1}\right)_{\{k\in\N,1\leq j\leq  4^{k^2} \}}$ and $\left(g_k\circ T^{j-1}\right)_{\{k\in\N,1\leq j\leq  4^{k^2} \}}$ are independent and each is distributed as $\left(Z_k(j)\right)_{\{k\in\N,1\leq j\leq 4^{k^2}\}}$.
\end{corollary}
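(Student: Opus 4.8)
The plan is to construct the $h_k := (f_k,g_k)$ one \emph{row} at a time, invoking Proposition \ref{prop:KV20} once per level $k$ after bundling $f_k$ and $g_k$ into a single function valued in a product alphabet. First, enlarge the probability space $(\Omega,\mathcal F,\P)$ so that it carries, in addition to the array $(Z_k(j))$, an independent copy $(Z_k'(j))$ of it, with all the variables $\{Z_k(j),Z_k'(j)\}$ mutually independent. For each $k$ put $W_k(j):=(Z_k(j),Z_k'(j))$; by Fact \ref{fact:1} this takes values in the finite set $A_k:=\{2^k,2^k+4^{-k},\ldots,4^k\}^2$, and $\big(W_k(j)\big)_{j=1}^{4^{k^2}}$ is an i.i.d.\ sequence of $A_k$-valued random variables whose two coordinate-sequences are independent and each distributed as $\big(Z_k(j)\big)_{j=1}^{4^{k^2}}$.

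Next I would run an induction on $k$. Assume $h_1,\dots,h_{k-1}$ have been built with $h_i:\mathcal X\to A_i$, and let $\mathcal Q_i$ be the (finite, measurable) partition of $\mathcal X$ into the level sets of $h_i$. Set
\[
\mathcal P_{k-1}:=\bigvee_{i=1}^{k-1}\ \bigvee_{j=1}^{4^{i^2}} T^{-(j-1)}\mathcal Q_i ,
\]
which is a finite measurable partition generating the same $\sigma$-algebra as the random vector $\big(h_i\circ T^{j-1}\big)_{i<k,\ 1\le j\le 4^{i^2}}$. Apply Proposition \ref{prop:KV20} with this partition, the finite set $A_k$, length $n=4^{k^2}$, and $U_j=W_k(j)$, to obtain $h_k:\mathcal X\to A_k$ such that $H_k:=\big(h_k\circ T^{j-1}\big)_{j=1}^{4^{k^2}}$ is distributed as $\big(W_k(j)\big)_{j=1}^{4^{k^2}}$ and is independent of $\mathcal P_{k-1}$. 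By the choice of $\mathcal P_{k-1}$ this says exactly that $H_k$ is independent of $(H_i)_{i<k}$; hence the blocks $(H_k)_{k\in\N}$ are mutually independent, and each $H_k$ has the law of $\big(W_k(j)\big)_{j=1}^{4^{k^2}}$.

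Finally set $f_k:=\pi_1\circ h_k$ and $g_k:=\pi_2\circ h_k$ for the coordinate projections $\pi_1,\pi_2:A_k\to\{2^k,\ldots,4^k\}$, and write $F_k:=\big(f_k\circ T^{j-1}\big)_{j=1}^{4^{k^2}}$, $G_k:=\big(g_k\circ T^{j-1}\big)_{j=1}^{4^{k^2}}$, so that $H_k=(F_k,G_k)$. From the structure of $(W_k(j))_j$ we get $F_k\eqd G_k\eqd \big(Z_k(j)\big)_{j=1}^{4^{k^2}}$ with $F_k$ independent of $G_k$. Since $F_k\subset\sigma(H_k)$, $G_k\subset\sigma(H_k)$ and the $\sigma(H_k)$ are mutually independent, a routine factorization of finite-dimensional distributions shows that $\big(f_k\circ T^{j-1}\big)_{\{k,\,1\le j\le 4^{k^2}\}}$ and $\big(g_k\circ T^{j-1}\big)_{\{k,\,1\le j\le 4^{k^2}\}}$ are independent, and that each is distributed as $\big(Z_k(j)\big)_{\{k,\,1\le j\le 4^{k^2}\}}$ (rows independent because the $H_k$ are, i.i.d.\ within each row, and with the correct one-dimensional marginals), which is the claim.

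The only real content here is organizational: the trick is to merge $f_k$ and $g_k$ into a single product-alphabet function so a single use of Proposition \ref{prop:KV20} realizes level $k$, and to hand that proposition the partition $\mathcal P_{k-1}$ recording every symbol placed so far, so that independence across rows is automatic. There is no analytic obstacle; one just has to check the bookkeeping, namely that at stage $k$ only finitely many translates and finitely many earlier rows enter $\mathcal P_{k-1}$ (so it is a genuine finite measurable partition), and that the final independence assertion does follow from row-wise independence together with independence between rows.
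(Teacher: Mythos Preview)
Your argument is correct for the corollary as stated, but the paper takes a different route with a hidden payoff. Instead of bundling $(f_k,g_k)$ into a product alphabet, the paper realizes each $f_k$ along an orbit segment of length $2\cdot 4^{k^2}$ (not $4^{k^2}$), so that $\big(f_k\circ T^{j-1}\big)_{k\in\N,\,1\le j\le 2\cdot 4^{k^2}}$ has the law of $\big(Z_k(j)\big)_{k\in\N,\,1\le j\le 2\cdot 4^{k^2}}$, and then \emph{defines} $g_k:=f_k\circ T^{4^{k^2}}$; independence of the two blocks then comes for free from the i.i.d.\ structure within each row. What this buys is the structural relation $g_k=f_k\circ T^{4^{k^2}}$, which is not recorded in the statement of the corollary but is essential later: in Theorem~\ref{thm: a>1} (the $\alpha\ge 1$ case) the summand $h_k=f_k-g_k$ must telescope to a $T$-coboundary with transfer function $\sum_{j=0}^{4^{k^2}-1}f_k\circ T^j$, and this telescoping is the sole input to Lemma~\ref{lem: a>1 such a long one} and hence to the control of $\W_n^{(\mathbf{VS})}(h)$ in Subsection~\ref{sub: a>1 symmeteric small}. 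Your product-alphabet construction produces $f_k$ and $g_k$ that are unrelated as functions on $\mathcal X$, so $f_k-g_k$ need not be a coboundary, and the very-small-scale argument would not go through with your functions. (A cosmetic aside: your alphabet $A_k$ should also contain $0$, since $Z_k(m)=0$ whenever $X_k(m)\notin[2^k,4^k]$; this does not affect your argument.)
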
 
\begin{proof}
The sequence $\left(Z_k(m)\right)_{n\in\N,1\leq m\leq 2\cdot4^{k^2}}$ is a sequence of independent random variables and for each $k$, $\left(Z_k(m)\right)_{1\leq m\leq 2\cdot 4^{k^2}}$ are i.i.d. random variables which take finitely many values. 

Proceeding verbatim as in the proof of \cite[Corollary 4]{2022arXiv221103448K}, one obtains a sequence of functions $f_k:\mathcal{X}\to\R$ such that $\left(f_k\circ T^{j-1}\right)_{\{k\in\N,1\leq j\leq  2\cdot 4^{k^2} \}}$ is distributed as $\left(Z_k(j)\right)_{\{k\in\N,1\leq j\leq 2\cdot 4^{k^2}\}}$. Setting $g_k=f_k\circ T^{4^{k^2}}$ the proof is concluded. 
\end{proof}


From now on let $(\mathcal{X},\B,m,T)$ be an aperiodic, ergodic dynamical system and $(f_k)_{k=1}^\infty$ and $(g_k)_{k=1}^\infty$ are the functions from Corollary \ref{cor: weak Sinai}. 
\begin{lemma}\label{lem: BCT 1}
 $\#\{k\in\N: f_k\neq 0\ \text{or}\ g_k\neq 0\}<\infty$, $m$-almost everywhere. 
	\end{lemma}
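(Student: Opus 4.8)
The plan is to apply the first Borel--Cantelli lemma to the events $E_k := \{f_k \neq 0 \text{ or } g_k \neq 0\}$, so it suffices to show $\sum_k m(E_k) < \infty$. By Corollary \ref{cor: weak Sinai}, for each fixed $k$ the random variables $f_k, g_k$ are individually distributed as $Z_k(1)$, hence $m(f_k \neq 0) = m(g_k \neq 0) = \P(Z_k(1) \neq 0)$, and by a union bound $m(E_k) \leq 2\,\P(Z_k(1) \neq 0)$. Now $Z_k(1) \neq 0$ forces $Y_k(1) \neq 0$, which by the definition $Y_k(m) = X_k(m)\mathbf 1_{[2^k \leq X_k(m) \leq 4^k]}$ means $X_k(1) \in [2^k, 4^k]$; in particular $Z_k(1)\neq 0$ implies $X_k(1) \geq 2^k$. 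So the whole estimate reduces to bounding $\P(X_k(1) \geq 2^k)$, where $X_k(1)$ is $S_\alpha(\sigma_k, 1, 0)$ with $\sigma_k^\alpha = 1/k$.

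The key input is the classical tail asymptotics for a (maximally skewed, $\beta = 1$) $\alpha$-stable law: $\P(X_k(1) > \lambda) \sim C_\alpha\, \sigma_k^\alpha\, \lambda^{-\alpha}$ as $\lambda \to \infty$, with $C_\alpha$ depending only on $\alpha$ (see e.g. \cite{MR1015093}). Since $\sigma_k^\alpha = 1/k$ and $\lambda = 2^k$, this gives, for all $k$ large enough, $\P(X_k(1) \geq 2^k) \leq \frac{2 C_\alpha}{k}\, 2^{-\alpha k} \leq 2^{-\alpha k}$ (absorbing the polynomial factor $1/k$ and the constant into the exponential). Hence $m(E_k) \leq 2^{1-\alpha k}$, which is summable since $\alpha > 0$, and the first Borel--Cantelli lemma yields $m(E_k \text{ i.o.}) = 0$, i.e. $\#\{k : f_k \neq 0 \text{ or } g_k \neq 0\} < \infty$ almost everywhere.

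One point requiring a little care: the tail asymptotic as stated is for the case $\alpha \neq 1$; for $\alpha = 1$ with $\beta = 1$ the one-sided tail still decays like $\lambda^{-1}$ up to constants (the right tail of a maximally skewed Cauchy-type stable law is still regularly varying of index $-1$), so the same bound $\P(X_k(1) \geq 2^k) \lesssim \frac{1}{k} 2^{-k}$ holds and summability is unaffected. Alternatively, since the construction only ever uses $\alpha \in (0,2)$ and the lemma is needed uniformly, one can simply quote that every non-degenerate $\alpha$-stable law has tails regularly varying of index $-\alpha$, so $\P(X_k(1) > 2^k) = o(2^{-(\alpha-\e) k})$ for any $\e > 0$, which is more than enough.

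I expect the only genuine obstacle to be pinning down the exact form of the stable tail bound with the $\sigma_k$-dependence made explicit (i.e. that the implied constant in $\P(X_k > \lambda) \lesssim \sigma_k^\alpha \lambda^{-\alpha}$ does not depend on $k$); this is standard but should be cited precisely, since it is what makes the series converge. Everything else is a routine union bound plus Borel--Cantelli.
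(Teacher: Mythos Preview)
Your proof is correct and follows essentially the same approach as the paper: a union bound gives $m(E_k)\le 2\P(Z_k(1)\neq 0)\le 2\P(X_k(1)>2^k)$, then the stable tail bound $\P(X_k(1)>2^k)\le C\sigma_k^\alpha 2^{-\alpha k}=C\,2^{-\alpha k}/k$ yields a summable series, and Borel--Cantelli finishes. The only cosmetic difference is that the paper packages the uniform tail estimate (with explicit $\sigma$-dependence) as its own Proposition~\ref{prop: ST} in the appendix, whereas you cite it externally.
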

\begin{proof}
Since $f_k$ and $g_k$ are $Z_k(1)$ distributed and $X_k(1)$ is $S_\alpha(\sigma_k,1,0)$ distributed, it follows from Proposition \ref{prop: ST} that
\begin{align*}
m(f_k\neq 0\ \text{or} \ g_k\neq 0)&\leq m\left(f_k\neq 0\right)+m\left(g_k\neq 0\right)\\
&=2\P\left(Z_k(1)\neq 0\right)\\
&\leq 2\P\left(X_k(1)>2^k\right)\leq C\frac{2^{-\alpha k}}{k},
\end{align*}
where $C$ is a global constant which does not depend on $k$. Using the union bound and stationarity, 
the right hand being summable, the claim follows from the Borel-Cantelli lemma. 
\end{proof}
In what follows, we assume that $\alpha\in(0,2)$ is fixed and $f_k$ and $g_k$ correspond to the functions in Corollary \ref{cor: weak Sinai}. In addition we write for $h:\mathcal{X}\to\R$ and $n\in\mathbb{N}$,
\[
S_n(h):=\sum_{k=0}^{n-1}h\circ T^k.
\] 

Define 
\[
f=\sum_{k=1}^\infty f_k\ \text{and}\ g=\sum_{k=1}^\infty g_k
\]

Note that by Lemma \ref{lem: BCT 1}, $f$ and $g$ are well defined as the sum in their definition is almost surely a sum of finitely many functions. Recall that the (re scaled) partial sum process of  a function $h:\mathcal{X}\to\R$ is, 
\[
\mathbb{W}_n(h)(t)=\frac{1}{n^{1/\alpha}}S_{[nt]}(h),\ \ 0\leq t\leq 1. 
\]

\begin{theorem}\label{thm: main a<1}
Assume $0<\alpha<1$. Fix $\beta\in [-1,1]$ and define 
\begin{align*}
h_k&:=\left(\frac{1+\beta}{2}\right)^{1/\alpha}f_k-\left(\frac{1-\beta}{2}\right)^{1/\alpha}g_k\\
h&:=\left(\frac{1+\beta}{2}\right)^{1/\alpha}f-\left(\frac{1-\beta}{2}\right)^{1/\alpha}g=\sum_{k=1}^\infty h_k.
\end{align*}
$\W_n(h)\Rightarrow^d \W$ where $\W$ is an $S_\alpha(\ln(2),\beta,0)$ L\'evy stable motion. 
\end{theorem}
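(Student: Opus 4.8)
The plan is to deduce the functional limit theorem from a triangular-array CLT in the Skorohod space $D([0,1])$. The key structural observation is that, by Corollary \ref{cor: weak Sinai}, for each fixed $k$ the block $(f_k\circ T^{j-1})_{j=1}^{4^{k^2}}$ and $(g_k\circ T^{j-1})_{j=1}^{4^{k^2}}$ are i.i.d.\ copies of $Z_k(j)$, and the $f$-block is independent of the $g$-block. Thus along the relevant time scales $h = \sum_k h_k$ behaves like a sum of independent contributions, and each contribution is (up to the $4^{-k}$ discretisation error from Fact \ref{fact:1}) a truncated one-sided $\alpha$-stable variable $Y_k$ with dispersion $\sigma_k^\alpha = 1/k$, scaled asymmetrically by $((1+\beta)/2)^{1/\alpha}$ on the $f$-side and $-((1-\beta)/2)^{1/\alpha}$ on the $g$-side. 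Summing the tail contributions $\sum_k k^{-1} 2^{-\alpha k}$-type estimates, the truncation levels $[2^k,4^k]$ are designed so that the aggregated jump measure converges to the $\alpha$-stable L\'evy measure with skewness $\beta$ and the correct total mass $\ln 2$ (note $\sum_k 1/k$ restricted to the dyadic window $[2^k,4^k]$ produces the $\ln 2 = \ln 4 - \ln 2$ scaling — this is exactly why the target dispersion is $\sqrt[\alpha]{\ln 2}$).

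First I would establish convergence of the finite-dimensional distributions of $\W_n(h)$. For a fixed $t$, $S_{[nt]}(h) = \sum_k S_{[nt]}(h_k)$; using Lemma \ref{lem: BCT 1} only finitely many $k$ contribute pointwise, but more usefully, for the distributional statement one truncates at $k \le K_n$ with $K_n \to \infty$ slowly (so that $4^{K_n^2}$ stays below $n$, ensuring the blocks genuinely look i.i.d.\ within $[0,[nt]]$) and controls the tail $\sum_{k > K_n}$ in probability via the tail bound in the proof of Lemma \ref{lem: BCT 1}. The main body $\sum_{k \le K_n} S_{[nt]}(h_k)/n^{1/\alpha}$ is then a genuine row-sum of a triangular array of independent, uniformly asymptotically negligible summands, and one applies the classical Lévy–Khinchine criterion (convergence of truncated variances, convergence of the jump intensity, centering): the contribution of level $k$ has roughly $[nt]\,\P(2^k \le X_k \le 4^k) \sim [nt]\cdot C (k\,2^{\alpha k})^{-1}$ jumps of size about $((1\pm\beta)/2)^{1/\alpha}$ times something in $[2^k/n^{1/\alpha}, 4^k/n^{1/\alpha}]$, and summing over $k$ in the window where $2^k \approx n^{1/\alpha}$ reconstructs the $\alpha$-stable Lévy measure $\nu(dx) = \big(\tfrac{1+\beta}{2}\cdot \tfrac{c_\alpha}{x^{1+\alpha}}\mathbf 1_{x>0} + \tfrac{1-\beta}{2}\cdot\tfrac{c_\alpha}{|x|^{1+\alpha}}\mathbf 1_{x<0}\big)dx$ with total normalization giving dispersion $\ln 2$. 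Independence of increments over disjoint time intervals $[s,t]$ follows because the jumps landing in disjoint index ranges come from disjoint sets of iterates $T^j$ and the $Z_k$ within a block are i.i.d.; the discretisation error is $O(4^{-k} \cdot [nt])$ per level, summable and $o(n^{1/\alpha})$.

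Second, I would prove tightness in the $J_1$ topology. Since each $h_k$ is nonnegative-times-$f_k$ minus nonnegative-times-$g_k$, and $f_k, g_k \ge 0$ are supported (after truncation) on $[2^k,4^k]$, the process $\W_n(h)$ is a difference of two nondecreasing pure-jump processes $\W_n(f\text{-part})$ and $\W_n(g\text{-part})$; for monotone processes $J_1$-tightness reduces to tightness of the one-dimensional marginals at $t=1$ plus convergence of finite-dimensional distributions, which we already have — this is the standard route (cf.\ the criteria in \cite{MR2271424}, Theorem 7.1 / Corollary 7.1, invoked for i.i.d.\ stable summands). Alternatively one verifies Aldous's criterion or the moment/oscillation conditions directly using the independence-within-blocks structure and the explicit tail bounds.

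The main obstacle I anticipate is the bookkeeping in the finite-dimensional step: one must choose the cutoff $K_n$ and verify simultaneously that (i) for $k \le K_n$ the block length $4^{k^2}$ is small enough relative to $n$ that the Proposition \ref{prop:KV20}-embedded sequences behave like genuine i.i.d.\ triangular rows on $[0,n]$ (so that classical stable CLT applies), (ii) the discarded tail $\sum_{k>K_n} h_k\circ T^j$ is negligible in probability uniformly in the time index, and (iii) the summation over $k$ of the level-$k$ Lévy measures converges to exactly the $S_\alpha(\ln 2,\beta,0)$ generator with no lost or double-counted mass — in particular getting the constant $\ln 2$ and the skewness split $\tfrac{1\pm\beta}{2}$ exactly right, rather than up to constants. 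Everything else (tightness via monotonicity, independent increments via disjoint-iterate independence, passing the discretisation error to zero) is routine given the tools already assembled in the excerpt.
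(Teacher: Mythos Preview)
Your proposal contains a genuine gap: the direction of the cutoff in $k$ is backwards. You propose to keep the ``main body'' $k\le K_n$ with $K_n$ chosen so that $4^{K_n^2}\le n$, arguing that this makes the blocks look i.i.d.\ on $[0,[nt]]$. But Corollary~\ref{cor: weak Sinai} guarantees the i.i.d.\ structure of $(f_k\circ T^{j-1})_{1\le j\le 4^{k^2}}$ only for the \emph{first} $4^{k^2}$ iterates; to have independence across $j=0,\dots,n-1$ you need $n\le 4^{k^2}$, i.e.\ $k$ \emph{large}, not small. Your condition $4^{K_n^2}\le n$ selects precisely the range where no control on the joint law is available. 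Conversely, the ``tail'' you propose to discard, $k>K_n$, is exactly where the i.i.d.\ structure is valid and where the limiting L\'evy measure actually comes from.

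The paper resolves this with a three-way split in $k$: a small range $k\le \tfrac{1}{2\alpha}\log n$, a middle range $\tfrac{1}{2\alpha}\log n<k\le \tfrac{1}{\alpha}\log n$, and a large range $k>\tfrac{1}{\alpha}\log n$. The small and large ranges are shown to contribute nothing (Lemmas~\ref{lem: a<1 Large} and \ref{lem: a<1 small}), uniformly in $t$. In the middle range one has $4^{k^2}\gg n$, so the embedded array is genuinely i.i.d.\ on $[0,n]$, and the paper's argument is then considerably simpler than your proposed L\'evy--Khinchine reconstruction: one shows that the truncation and discretisation in passing from $X_k$ to $Z_k$ are negligible (Lemma~\ref{lem: a<1 middle Z}(a)), after which the untruncated middle sum is \emph{exactly} the partial-sum process of an i.i.d.\ $S_\alpha(A_n,1,0)$ sequence with $A_n^\alpha=\sum_{k=\frac{1}{2\alpha}\log n}^{\frac{1}{\alpha}\log n}\tfrac{1}{k}\to\ln 2$, and the known functional CLT for i.i.d.\ stable summands applies directly. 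Note in particular that the constant $\ln 2$ arises from this harmonic sum over the $k$-window, not from the truncation levels $[2^k,4^k]$ as you suggest; the truncation is a technical device to produce finitely-valued variables for Proposition~\ref{prop:KV20}, and is removed before the limit is identified. Tightness never needs a separate argument, since the small and large parts go to zero in $\|\cdot\|_\infty$ and the middle part inherits $J_1$-convergence from the i.i.d.\ FCLT.
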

We also have a FCLT version for general $\alpha\in(0,2)$ when the limit is $S\alpha S$. Recall that the functions $f_k$ and $g_k$ are related by $g_k=f_k\circ T^{4^{k^2}}$. 
\begin{theorem}\label{thm: a>1}
Assume $\alpha\in[1,2)$. Define
\begin{align*}
	h_k&:=f_k-g_k\\
	h&:=f-g=\sum_{k=1}^\infty h_k.
\end{align*}
$\W_n(h)\Rightarrow^d\W$ where $\W$ is a $\aS\left(\sqrt[\alpha]{2\ln(2)}\right)$ L\'evy motion. 

\end{theorem}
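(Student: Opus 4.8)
The plan is to establish the functional CLT by a truncation-and-blocking argument, comparing the partial sum process $\W_n(h)$ with a process built from the discretized stable random variables $Z_k$. The key observation is that since $h_k = f_k - g_k$ with $g_k = f_k \circ T^{4^{k^2}}$, and the lengths $4^{k^2}$ grow super-exponentially, the contributions of different levels $k$ are essentially decoupled over the window $\{0,1,\dots,n-1\}$: for a given $n$, only finitely many levels $k$ contribute non-trivially, and by Lemma \ref{lem: BCT 1} the sum defining $h$ is pointwise finite. First I would fix $n$, choose a cutoff level $\kappa = \kappa(n)$ (roughly the largest $k$ with $4^{k^2} \leq n$, so $\kappa \sim \sqrt{\log n}$), and split $h = h^{\leq \kappa} + h^{> \kappa}$. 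The tail part $h^{>\kappa}$ should be shown to be negligible in the $J_1$ topology: using the tail estimate $\P(X_k(1) > 2^k) \lesssim 2^{-\alpha k}/k$ from Lemma \ref{lem: BCT 1} together with a maximal inequality, one controls $\sup_{0\le t\le 1} |\W_n(h^{>\kappa})(t)|$ in probability.

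Next, for the main part, I would exploit Corollary \ref{cor: weak Sinai}: the family $(f_k \circ T^{j-1})$ and $(g_k \circ T^{j-1})$ are (jointly across $k$) distributed as independent copies of the triangular array $(Z_k(j))$ over the range $1 \leq j \leq 4^{k^2}$. Since for $k \leq \kappa(n)$ we have $n \leq 4^{k^2}$ roughly only at the top level and comfortably so below, the increments of $S_{[nt]}(h_k)$ can be matched in distribution with increments of sums of the i.i.d. $Z_k$'s (plus a $g_k$-shift that, because $4^{k^2} \gg n$ for $k$ near $\kappa$, only enters through a bounded number of terms or can be absorbed). I would then invoke the classical stable functional CLT, namely \cite[Corollary 7.1.]{MR2271424}: a triangular array whose row sums are exactly $S\alpha S$ (or in the classical domain of attraction) and whose normalization is $n^{1/\alpha}$ converges in $D([0,1])$ with the $J_1$ topology to the corresponding Lévy motion. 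The dispersion bookkeeping is where the constant $\sqrt[\alpha]{2\ln 2}$ appears: $X_k$ is $S_\alpha(\sigma_k,1,0)$ with $\sigma_k^\alpha = 1/k$, the truncation to $[2^k,4^k]$ retains a $c_k/k$ fraction of the stable mass with $c_k \to$ a constant, and summing $\sum_{k=1}^{\kappa} 1/k \sim \ln \kappa \sim \frac12 \ln\log n$ — wait, this needs the normalization to be calibrated so that after dividing by $n^{1/\alpha}$ the effective scale converges; the factor $2$ comes from combining $f_k$ and $g_k$ (two independent copies), and the $\ln 2$ from the base-$2$ convention in the range $[2^k, 4^k]$, so that the accumulated dispersion telescopes to exactly $2\ln 2$. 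I would verify this by computing, for $S\alpha S$ variables, that $\P(|X_k(1)| \in [2^k, 4^k]) \cdot (\text{typical size})^\alpha$ summed against the $n^{1/\alpha}$ scaling yields the stated constant, using that the tail constant of $S_\alpha(\sigma,0,0)$ is $\sigma^\alpha C_\alpha$ and that the relevant sum over $k \leq \kappa$ of $2^{\alpha k}/(k \cdot n)$, with $n \approx 4^{\kappa^2}$, is arranged to be $\Theta(1)$.

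The main obstacle I anticipate is the control of the symmetrization/blocking interaction between levels: one must show that the cross terms in $S_{[nt]}(h) = \sum_k S_{[nt]}(h_k)$ behave like \emph{independent} increments in the limit even though $f_k$ and $g_k$ within the \emph{same} level are correlated (via the shift $T^{4^{k^2}}$) and the whole array is only "weakly independent" in the Sinai/copying sense of Proposition \ref{prop:KV20}. The trick $g_k = f_k \circ T^{4^{k^2}}$ is precisely designed so that, on the time window $[0,n]$ with $n \ll 4^{k^2}$, the sums $S_{[nt]}(f_k)$ and $S_{[nt]}(g_k)$ see disjoint (far-apart) stretches of the orbit and hence are asymptotically independent; making this rigorous in the $J_1$ topology — rather than just for one-dimensional marginals — requires checking tightness (via the standard Aldous-type or moment criteria for $D([0,1])$ adapted to stable processes, as in \cite{MR2271424}) and finite-dimensional convergence simultaneously, and handling the Skorohod-space subtleties introduced by the discretization error $|Y_k - Z_k| < 4^{-k}$ from Fact \ref{fact:1}, which is summable and therefore harmless but must be accounted for in the $J_1$ estimate. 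For $\alpha = 1$ specifically, extra care is needed since the symmetry ($\beta = 0$) is what makes the centering unnecessary; the $S\alpha S$ restriction in the statement is exactly what sidesteps the logarithmic centering corrections that plague the general $\alpha = 1$ case.
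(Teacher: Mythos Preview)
Your decomposition is in the wrong place, and the roles of the two pieces are reversed. You take $\kappa\sim\sqrt{\log n}$ (the largest $k$ with $4^{k^2}\le n$), declare $h^{>\kappa}$ negligible, and treat $h^{\le\kappa}$ as the main contribution. But the tail bound you invoke, $\P(f_k\neq 0)\lesssim 2^{-\alpha k}/k$, union-bounded over the $n$ time steps, gives $n\cdot 2^{-\alpha k}/k$, which tends to $0$ only once $k>\tfrac{1}{\alpha}\log n$, not once $k>\sqrt{\log n}$. In fact the entire limiting mass comes from the \emph{middle} range $\tfrac{1}{2\alpha}\log n<k\le\tfrac{1}{\alpha}\log n$, which sits well inside your ``negligible'' region $k>\kappa$. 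The paper's three-way split $\W_n(h)=\W_n^{(\mathbf S)}+\W_n^{(\mathbf M)}+\W_n^{(\mathbf L)}$ at the cutoffs $\tfrac{1}{2\alpha}\log n$ and $\tfrac{1}{\alpha}\log n$ is the right one: on the middle block $4^{k^2}>n$, so $\{f_k\circ T^{j},\,g_k\circ T^{j}:0\le j<n,\ k\ \text{in the middle range}\}$ are genuinely independent, the row sums $V_n(j)=\sum_k\bigl(X_k(j)-X_k(j+d_k)\bigr)$ are exactly $S\alpha S(A_n)$ with $A_n^\alpha=\sum_k\tfrac{2}{k}\to 2\ln 2$, and the functional CLT for i.i.d.\ stables applies directly. (The $\ln 2$ is simply the harmonic sum over a $k$-range whose endpoints differ by a factor of $2$; it has nothing to do with the truncation window $[2^k,4^k]$, which is why your bookkeeping stalls at ``$\sum_{k\le\kappa}1/k\sim\tfrac12\ln\log n$ --- wait''.)

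Conversely, the region $k\le\sqrt{\log n}$ that you call the ``main part'' is exactly where the paper has to work hardest to show the contribution \emph{vanishes}. There $4^{k^2}\le n$, so the windows $\{T^j:0\le j<n\}$ and $\{T^{j+4^{k^2}}:0\le j<n\}$ overlap and independence of $f_k\circ T^j$ and $g_k\circ T^j$ along the orbit fails; moreover, for $\alpha\ge 1$ the $L^1$ argument that sufficed in Lemma~\ref{lem: a<1 small} no longer works. The paper handles this with a further split at $k=\sqrt{\log n}$ (so your $\kappa$ does appear, but only as a \emph{secondary} cutoff inside the already-small block), an $L^2$ computation exploiting the coboundary structure $h_k=f_k-f_k\circ T^{d_k}$ (Lemma~\ref{lem: a>1 such a long one} and Corollary~\ref{cor: moment VS}), and a Billingsley-type maximal inequality in place of Kolmogorov's. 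Your sketch never isolates the middle block, never explains why the very small levels are negligible, and the stalled dispersion computation is a symptom of the misplaced cutoff rather than a detail to be patched.
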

\subsection{General CLT for $\alpha>1$}
Recall that a \textbf{coboundary} for a measure preserving transformation is a function $H$ such that there exists a function $G$, called a \textbf{transfer function}, so that $H=G-G\circ T$. The resulting cocycle (sum process) of the  coboundaries $f_k - g_k$ from the proof of Theorem \ref{thm: a>1} converges to a symmetric $\alpha$-stable distribution. To get a skewed $\alpha$-stable limit we thus use a different kind of coboundaries as described below.

Set $D_k:=4^{\alpha k}$,
\[
\varphi_k:=\frac{1}{D_k}\sum_{j=0}^{D_k-1}f_k\circ T^j
\]
and $h_k:=f_k-\varphi_k$. We note that the $h_k$ and $h$ in this subsection denote different functions than the ones in the previous subsection. 
\begin{lemma}\label{lem: h is well defined a>1}
If $\alpha\in(1,2)$, then $\sum_{k=1}^N h_k$ converges in $L^1(m)$ and almost surely as $N\to\infty$. 
\end{lemma}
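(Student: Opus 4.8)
The plan is to show that $\sum_{k=1}^N h_k = \sum_{k=1}^N (f_k - \varphi_k)$ is Cauchy in $L^1(m)$ and that its partial sums converge almost surely. The key quantitative input is the tail bound used already in Lemma \ref{lem: BCT 1}, namely $m(f_k \neq 0) = \P(Z_k(1) \neq 0) \leq \P(X_k(1) > 2^k) \leq C\, 2^{-\alpha k}/k$, together with the fact that on the event $\{f_k \neq 0\}$ we have $f_k \leq 4^k$ (since $Z_k(1) \leq 4^k$ by Fact \ref{fact:1}), and $f_k \geq 0$. First I would estimate $\|f_k\|_1 = \E|Z_k(1)| = \E\big(X_k(m)\mathbf{1}_{[2^k \leq X_k(m) \leq 4^k]}\big) + O(4^{-k})$; using the $S_\alpha(\sigma_k,1,0)$ tail $\P(X_k(1) > t) \sim c_\alpha \sigma_k^\alpha t^{-\alpha}$ with $\sigma_k^\alpha = 1/k$, the truncated first moment $\E\big(X_k \mathbf{1}_{[2^k,4^k]}\big)$ is of order $\frac{1}{k}\int_{2^k}^{4^k} t \cdot t^{-\alpha-1}\,dt \asymp \frac{1}{k(\alpha-1)} \big(2^{k(1-\alpha)} - 4^{k(1-\alpha)}\big) \asymp \frac{2^{-k(\alpha-1)}}{k}$ when $\alpha > 1$. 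Since $\varphi_k$ is an average of $D_k$ copies of $f_k$ along the $T$-orbit and $T$ preserves $m$, we get $\|\varphi_k\|_1 \leq \|f_k\|_1$, hence $\|h_k\|_1 \leq 2\|f_k\|_1 \lesssim 2^{-k(\alpha-1)}/k$, which is summable in $k$ precisely because $\alpha > 1$. This gives $L^1$-convergence of $\sum_{k=1}^N h_k$ by completeness.

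For the almost sure convergence I would argue that with probability one only finitely many $h_k$ are nonzero. By Corollary \ref{cor: weak Sinai} and the construction, $f_k \circ T^{j}$ is supported on the atoms where the embedded copy of $Z_k$ is nonzero; the event that $\varphi_k \neq 0$ is contained in $\bigcup_{j=0}^{D_k - 1} T^{-j}\{f_k \neq 0\}$, which by stationarity has measure at most $D_k \cdot m(f_k \neq 0) \leq 4^{\alpha k} \cdot C\, 2^{-\alpha k}/k = C\, 2^{\alpha k}/k$. Unfortunately this bound is \emph{not} summable, so a naive Borel–Cantelli on $\{h_k \neq 0\}$ fails — and this is the main obstacle. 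The fix is to use the specific block structure: the function $\varphi_k$ is built from the copy of $(Z_k(j))_{1 \leq j \leq 4^{k^2}}$ embedded along a disjoint block of orbit length $4^{k^2}$, and $D_k = 4^{\alpha k} \ll 4^{k^2}$, so at a point $x$, $S_n(h) = S_n(f) - S_n(\text{averaged part})$ only picks up contributions from those $k$ for which the relevant orbit segment meets $\{f_k \neq 0\}$ or its $D_k$-translates. I would instead bound $\|h_k\|_1$ directly and invoke the following: since $\sum_k \|h_k\|_1 < \infty$, the sum $\sum_k |h_k|$ has finite expectation, hence is finite a.e., which \emph{already} gives a.s. absolute convergence of $\sum_k h_k(x)$ without needing a Borel–Cantelli argument at all.

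So the cleanest route is: (i) establish $\|f_k\|_1 \lesssim 2^{-(\alpha-1)k}/k$ from the stable tail asymptotics and Fact \ref{fact:1} (using Proposition \ref{prop: ST}, referenced earlier); (ii) deduce $\|h_k\|_1 \leq 2\|f_k\|_1$ from $T$-invariance of $m$ and the triangle inequality for the average $\varphi_k$; (iii) conclude $\sum_k \|h_k\|_1 < \infty$ because $\alpha > 1$, which yields both $L^1$-convergence (by completeness of $L^1$) and a.s. absolute convergence (since $\E \sum_k |h_k| < \infty$ forces $\sum_k |h_k| < \infty$ a.e.). The only delicate point is getting the truncated-first-moment estimate for the one-sided stable law right, in particular checking that the exponent $-(\alpha-1)k$ is negative, i.e. that the hypothesis $\alpha > 1$ is exactly what makes $\|f_k\|_1$ summable; the borderline case $\alpha = 1$ is genuinely excluded here, consistent with the statement.
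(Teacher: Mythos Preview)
Your proposal is correct and matches the paper's proof essentially line for line: both show $\sum_k \|h_k\|_1 < \infty$ via $\|h_k\|_1 \le 2\|f_k\|_1$ (from $T$-invariance of $m$) and the truncated-moment estimate $\|f_k\|_1 \lesssim 2^{k(1-\alpha)}/k$, then invoke Fubini to get both $L^1$ and a.e.\ absolute convergence at once. Your second-paragraph detour through a Borel--Cantelli argument is unnecessary (as you yourself realize), and the paper cites Corollary~\ref{cor: moments below a} for the moment bound rather than computing it by hand, but the substance is identical.
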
	
\begin{proof}
	By Fubini's theorem it suffices to show that $\sum_{k=1}^\infty\int |h_k|dm<\infty$. 
	
	To that end, for a fixed $k$ we have 
	\begin{align*}
		\int |h_k|dm&\leq \int|f_k|dm+\frac{1}{D_k}\sum_{j=0}^{D_k-1}\int\left|f_k\circ T^j\right|dm
		=2\int|f_k|dm,
	\end{align*}
	where the last equality is true as $T$ preserves $m$. Next $f_k$ and $Z_k(1)$ are equally distributed and 
	\[
	Z_k(1)\leq Y_k(1)\leq X_k(1)1_{\left[X_k(1)\geq 2^k\right]}.
	\]
	As $\alpha>1$, it follows from this and Corollary \ref{cor: moments below a}  that there exists $C>0$ such that for all $k\in\N$,
	\begin{align*}
		\int|f_k|dm&=\E\left(Z_k(1)\right)\\
		&\leq \E\left(X_k(1)1_{\left[X_k(1)\geq 2^k\right]}\right)\leq C\frac{2^{k(1-\alpha)}}{k}. 
	\end{align*}
	We conclude that
	\[
	\sum_{k=1}^\infty\int |h_k|dm\leq C\sum_{k=1}^\infty \frac{2^{k(1-\alpha)}}{k}<\infty. 
	\]
\end{proof}
Following this, we write $h=\sum_{k=1}^\infty h_k$ and throughout this subsection and Section \ref{sec: Skewed CLT a>1}, $h$ always corresponds to this function. Note that for every $k\in\N$, $\E\left(X_k(1)1_{\left[X_k(1)\leq 2^k\right]}\right)$ exists, write 
\[
B_n:=n\sum_{k=\frac{1}{2\alpha}\log(n)}^{\frac{1}{\alpha}\log(n)}\E\left(X_k(1)1_{\left[X_k(1)\leq 2^k\right]}\right).
\]
\begin{theorem}\label{skewed CLT for a>1}
Assume $\alpha\in (1,2)$. 	$\frac{S_n\left(h\right)+B_n}{n^{1/\alpha}}$ converges in distribution to an $S_\alpha(\ln(2),1,0))$ random variable. 
\end{theorem}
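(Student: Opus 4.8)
The plan is to prove Theorem~\ref{skewed CLT for a>1} by decomposing the cocycle $S_n(h)$ into a "main" part that produces the skewed stable limit and a remainder that is negligible in probability. Writing $h = \sum_k h_k$ with $h_k = f_k - \varphi_k$ and $\varphi_k = D_k^{-1}\sum_{j=0}^{D_k-1} f_k\circ T^j$, one has $S_n(h_k) = S_n(f_k) - S_n(\varphi_k)$, and since $\varphi_k$ is an average over an orbit segment of length $D_k = 4^{\alpha k}$, the telescoping structure gives $S_n(\varphi_k) = D_k^{-1}\sum_{j=0}^{D_k-1}\big(S_{n}(f_k)\text{-shifted by }j\big)$, so that $S_n(f_k) - S_n(\varphi_k)$ is, up to boundary terms of size $O(D_k)$ worth of $f_k$-values, equal to $S_n(f_k)$ minus a smoothed copy. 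First I would record the elementary identity $S_n(h_k) = S_n(f_k) - \frac{1}{D_k}\sum_{j=0}^{D_k-1}\big(f_k\circ T^{n+j} + \cdots\big)$ precisely, observe that the correction is a coboundary-type term $G_k - G_k\circ T^n$ with $G_k$ bounded in $L^1$ by $2\sum_j \|f_k\|_1 \lesssim D_k \cdot 2^{k(1-\alpha)}/k$ — wait, this is too large termwise, so the right move is instead to keep $h_k$ as is and work directly with $S_n(h_k)$.

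So the real plan is: fix $n$ and split the index $k$ into three ranges relative to $\log n$. For $k$ much smaller than $\frac{1}{2\alpha}\log n$, the random variables $f_k$ (bounded between $2^k$ and $4^k$ when nonzero) are "small" and contribute, after centering by $B_n$, a negligible amount; here one uses that $\E(X_k(1)\mathbf 1_{[X_k(1)\le 2^k]})$ is exactly the centering chosen in $B_n$, and Corollary~\ref{cor: moments below a} controls the truncated second moment. For $k$ in the critical window $\big[\frac{1}{2\alpha}\log n,\ \frac{1}{\alpha}\log n\big]$, the triangular array $\{Z_k(j)\}$ realized by $\{f_k\circ T^{j-1}\}_{j\le n}$ (available since $n \le 4^{k^2}$ in this range for $n$ large) is, by construction, an independent array, so $\sum_{k}\sum_{j} Z_k(j)$ is a genuine sum of independent variables; I would verify the classical triangular-array CLT conditions (the Lévy–Khintchine / Lindeberg-type criterion for convergence to $S_\alpha(\ln 2, 1, 0)$) using the tail asymptotics $\P(X_k(1) > t) \sim c_\alpha \sigma_k^\alpha t^{-\alpha}$ with $\sigma_k^\alpha = 1/k$, so that the aggregated tail over the window telescopes to something $\sim \ln 2 \cdot t^{-\alpha}$ (this is where the $\ln 2$ and the skewness $\beta = 1$ come from, since each $X_k$ is totally skewed to the right). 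For $k$ larger than $\frac{1}{\alpha}\log n$, Lemma~\ref{lem: BCT 1} (or a direct Borel–Cantelli / first-moment bound) shows $S_n(f_k) = 0$ with high probability. Finally I would control the $\varphi_k$ contribution: $S_n(\varphi_k)$ differs from a shift of $S_n(f_k)$ by boundary blocks, and summing these over the critical window, using $D_k = 4^{\alpha k} \le n$ exactly on that window, shows $\frac{1}{n^{1/\alpha}}\sum_k S_n(\varphi_k) \to 0$ in probability — this is precisely why $D_k$ was chosen to be $4^{\alpha k}$, matching the scaling $n^{1/\alpha}$ at the top of the window.

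Assembling these pieces: $\frac{S_n(h) + B_n}{n^{1/\alpha}} = \frac{1}{n^{1/\alpha}}\big(\sum_{k\text{ in window}} S_n(f_k) + B_n\big) + o_{\P}(1)$, and the bracketed term converges in distribution to $S_\alpha(\ln 2, 1, 0)$ by the triangular-array CLT, where $B_n$ supplies exactly the centering needed because $\alpha \in (1,2)$ forces a nonzero mean correction for the truncated-below part. I expect the main obstacle to be the bookkeeping in the middle range: one must simultaneously (i) justify replacing $S_n(f_k)$ by the sum of the independent array $\sum_{j=1}^{n} Z_k(j)$ — using that $Y_k - Z_k < 4^{-k}$ from Fact~\ref{fact:1}, whose cumulative effect over $n \le 4^{k^2}$ terms is $O(n 4^{-k})$ and must be shown $o(n^{1/\alpha})$ — and (ii) push the triangular-array limit theorem through with the double index $(k,j)$, checking that the truncated variances vanish and the Lévy measures converge weakly on $\R\setminus\{0\}$ to the correct $\alpha$-stable Lévy measure with total skewness. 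The delicate quantitative input is that $\sum_{k=\frac{1}{2\alpha}\log n}^{\frac{1}{\alpha}\log n}\frac{1}{k} \cdot (\text{per-}k\text{ tail mass})$ aggregates to the constant $\ln 2$; this is a harmonic-sum computation, $\sum_{k \in [\frac{1}{2\alpha}\log n,\ \frac1\alpha \log n]} \frac1k \to \ln\big(\tfrac{(1/\alpha)\log n}{(1/2\alpha)\log n}\big) = \ln 2$, which pins down the dispersion.
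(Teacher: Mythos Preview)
Your overall architecture matches the paper's: split the index $k$ into three ranges, show the small and large ranges are negligible after normalisation, and get the stable limit from the middle window $\big[\frac{1}{2\alpha}\log n,\ \frac{1}{\alpha}\log n\big]$. The $\ln 2$ computation via the harmonic sum is exactly right, and for the middle range the paper does something even simpler than your triangular-array CLT: after replacing $Z_k$ by $X_k$ (controlling the discretisation and truncation errors, including the $B_n$ centering which absorbs the part $X_k\mathbf 1_{[X_k\le 2^k]}$), the remaining object $\sum_{k,j} X_k(j)$ is an \emph{exact} sum of independent $S_\alpha(\sigma_k,1,0)$ variables, hence itself $S_\alpha(\Sigma_n,1,0)$ with $\Sigma_n^\alpha\to\ln 2$. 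No Lindeberg-type verification is needed.

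There is, however, a genuine gap in your handling of the small range. You correctly write $S_n(h_k)$ as a coboundary $G_k - G_k\circ T^n$, but then discard this route because the $L^1$ bound $\|G_k\|_1\lesssim D_k\cdot 2^{k(1-\alpha)}/k$ is too large (and it is). Your replacement plan --- ``the small $f_k$ contribute, after centering by $B_n$, a negligible amount'' --- does not work: $B_n$ only sums over the \emph{middle} window and supplies no centering for small $k$, and a direct size bound on $\sum_{k\le \frac{1}{2\alpha}\log n} S_n(f_k)$ diverges after dividing by $n^{1/\alpha}$. The fix is not to abandon the coboundary identity but to estimate $G_k$ in $L^2$ rather than $L^1$: for $k\le \frac{1}{2\alpha}\log n$ one has $D_k\le n\le 4^{k^2}$, so the summands $f_k\circ T^j$, $0\le j<D_k$, are \emph{independent}, and the weighted sum $G_k=\sum_{j=0}^{D_k-2}\frac{D_k-j-1}{D_k}(f_k-\E f_k)\circ T^j$ satisfies $\mathrm{Var}(G_k)\le D_k\cdot\mathrm{Var}(f_k)\lesssim 4^{\alpha k}\cdot 4^{(2-\alpha)k}/k=4^{2k}/k$. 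Summing over $k\le\frac{1}{2\alpha}\log n$ gives $\mathrm{Var}\big(\sum_k G_k\big)\lesssim n^{2/\alpha}/\log n$, whence $n^{-1/\alpha}\sum_k G_k\to 0$ in probability by Chebyshev, and the same for $T^n\big(\sum_k G_k\big)$. This is Claim~\ref{clm: a step for small a>1} and Lemma~\ref{lem: a>1 small} in the paper.

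A related bookkeeping slip: you write ``$D_k=4^{\alpha k}\le n$ exactly on that window'' when discussing the $\varphi_k$ contribution for the middle range, but in fact $D_k\le n$ precisely on the \emph{small} range ($k\le\frac{1}{2\alpha}\log n$). The paper accordingly keeps $h_k=f_k-\varphi_k$ intact only for small $k$, and for $k>\frac{1}{2\alpha}\log n$ separates $f_k$ from $\varphi_k$: the $f_k$ part (middle range) gives the limit, while $V_n(\varphi)=\sum_{k>\frac{1}{2\alpha}\log n} S_n(\varphi_k-\E\varphi_k)$ is shown to have variance $\lesssim n^{2/\alpha}/\log n$ using the expansion \eqref{eq: sum phi n<d} valid when $n\le D_k$.
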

The following claim gives the asymptotic of $B_n$.
\begin{claim}
	For every $\alpha\in (1,2)$, there exists $c_\alpha>0$ such that $B_n=c_\alpha n(\log(n))^{1-\frac{1}{\alpha}}(1+o(1))$ as $n\to\infty$.
\end{claim}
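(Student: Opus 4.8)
The plan is to reduce the claim to a sharp asymptotic, as $k\to\infty$, for the single-index truncated mean $a_k:=\E\big(X_k(1)\mathbf{1}_{[X_k(1)\le 2^k]}\big)$, and then to evaluate the sum $B_n=n\sum_{k=\frac1{2\alpha}\log n}^{\frac1\alpha\log n}a_k$.

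Since $\alpha>1$, the stable variable $X_k(1)\sim S_\alpha(\sigma_k,1,0)$ has finite first moment equal to its shift parameter, namely $0$, so $a_k=-\E\big(X_k(1)\mathbf{1}_{[X_k(1)>2^k]}\big)$. Using the layer-cake identity
\[
\E\big(X_k(1)\mathbf{1}_{[X_k(1)>a]}\big)=a\,\P\big(X_k(1)>a\big)+\int_a^\infty\P\big(X_k(1)>x\big)\,dx
\]
together with the sharp right-tail asymptotics of a totally right-skewed stable law, $\P(X_k(1)>\lambda)\sim C_\alpha\,\sigma_k^\alpha\lambda^{-\alpha}=C_\alpha\lambda^{-\alpha}/k$ as $\lambda\to\infty$ --- which is the precise version of the estimates already packaged in Proposition \ref{prop: ST} and Corollary \ref{cor: moments below a} --- one obtains a sharp asymptotic for $a_k$ with an explicit constant, along with an error bound uniform over the window $k\in[\tfrac1{2\alpha}\log n,\tfrac1\alpha\log n]$. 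The subtle point here is that the $O(\sigma_k)$ contributions of the bulk of $X_k(1)$ cancel exactly (this is the content of $\E X_k(1)=0$), so $a_k$ is governed by the next-order, tail-driven term; promoting the one-sided inequality used in Lemma \ref{lem: h is well defined a>1} to a two-sided asymptotic with the correct constant is where the actual work lies.

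It then remains to feed this asymptotic into $B_n$ and perform an elementary asymptotic analysis of the resulting sum over $k\in[\tfrac1{2\alpha}\log n,\tfrac1\alpha\log n]$: balancing the power of $2^k$ against the slowly varying weight $1/k$, identifying which end of the window is dominant, and collecting the constant $C_\alpha$ produces the stated form $c_\alpha\,n(\log n)^{1-\frac1\alpha}(1+o(1))$ with $c_\alpha>0$ explicit in terms of $C_\alpha$ and $\alpha$. I expect this last step to be routine, so the main obstacle is the sharp per-index asymptotic for $a_k$ described above.
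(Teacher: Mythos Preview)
The paper's argument is much shorter and rests on a different idea: scale, writing $X_k(1)\stackrel{d}{=}\sigma_k Z$ with $Z\sim S_\alpha(1,1,0)$; then monotone convergence gives $\E\big(Z\mathbf 1_{[\sigma_k Z\le 2^k]}\big)\to\E(Z)=:\eta_\alpha$, which the paper asserts is strictly positive, so that $a_k=\eta_\alpha\,\sigma_k(1+o(1))=\eta_\alpha\,k^{-1/\alpha}(1+o(1))$. The elementary sum $\sum_{k=\frac1{2\alpha}\log n}^{\frac1\alpha\log n}k^{-1/\alpha}\sim\big(\tfrac1\alpha\log n\big)^{1-1/\alpha}(1-2^{1/\alpha-1})$ then gives the claim immediately. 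No sharp tail asymptotics or layer-cake formulas are used; the whole point is that $a_k$ lives at the scale $\sigma_k$, not at a tail-driven scale.

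Your route is genuinely different and, more importantly, does not lead to the stated conclusion. You start from $\E(X_k(1))=0$, which is \emph{incompatible} with the paper's assertion $\eta_\alpha>0$ (note $\E(X_k(1))=\sigma_k\eta_\alpha$); if the paper's assertion holds, your identity $a_k=-\E\big(X_k(1)\mathbf 1_{[X_k(1)>2^k]}\big)$ is simply false and the approach collapses at step one. If instead $\E(X_k(1))=0$ is correct (as it is in the standard Samorodnitsky--Taqqu parametrization the paper quotes), then your own tail computation yields $a_k\sim -\tfrac{\alpha C_\alpha}{\alpha-1}\cdot 2^{k(1-\alpha)}/k$: negative, and exponentially small in $k$. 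Summing over $k\in\big[\tfrac1{2\alpha}\log n,\tfrac1\alpha\log n\big]$, the dominant contribution comes from the lower endpoint and one gets $B_n$ of order $-n^{(\alpha+1)/(2\alpha)}/\log n$, not $+c_\alpha\,n(\log n)^{1-1/\alpha}$. In that case the claim as written would be false, and your ``routine'' final step certainly cannot produce it. Either way, the gap is that you assert the outcome of the sum without checking it: your own asymptotic for $a_k$ is not of the shape $k^{-1/\alpha}$ that the stated form of $B_n$ requires.
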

\begin{proof}
	Recall that $\sigma_k=k^{-1/\alpha}$. Since $\frac{2^k}{\sigma_k}\to \infty$ as $k\to\infty$, it follows from the monotone convergence theorem that if $Z$ is an $S_\alpha(1,1,0)$ random variable, then
	\[
	\lim_{k\to\infty}\E\left(Z1_{[\sigma_kZ\leq 2^k]}\right)=\E(Z)=:\eta_\alpha>0. 
	\]
	Now for every $k$, $X_k(j)$ and $\sigma_kZ$ are equally distributed. Consequently,
	\[
	\E\left(X_k(1)1_{\left[X_k(1)\leq 2^k\right]}\right)=\sigma_k\E\left(Z1_{\left[\sigma_kZ\leq 2^k\right]}\right)=\sigma_k\eta_\alpha\left(1+o_{k\to\infty}(1)\right). 
	\]
	The claimed asymptotic now follows from this and 
	\[
	\sum_{k=\frac{1}{2\alpha}\log(n)}^{\frac{1}{\alpha}\log(n)}\sigma_k\sim \left(\frac{1}{\alpha}\log(n)\right)^{1-\frac{1}{\alpha}}\left(1-2^{\frac{1}{\alpha}-1}\right),\ \ \text{as}\ n\to\infty. 
	\]
\end{proof}
Now write 
\[
\hat{h}_k:=g_k-\frac{1}{D_k}\sum_{j=0}^{D_k-1}g_k\circ T^j,
\]
and $\hat{h}:=\sum_{k=1}^\infty \hat{h}_k$. Note that $\hat{h}$ is well defined as for all $k$, $\hat{h}_k=h_k\circ T^{4^{k^2}}$ so $h$ is a limit in $L^1$ by Lemma \ref{lem: h is well defined a>1}.  
\begin{theorem}\label{thm: CLT a>1}
	Assume $\alpha>1$. Fix $\beta\in [-1,1]$ and define 
	\[
		H:=\left(\frac{\beta+1}{2}\right)^{1/\alpha}h-\left(\frac{1-\beta}{2}\right)^{1/\alpha}\hat{h}.
	\]
Then	$\frac{1}{n^{1/\alpha}}\left(S_n(H)+B_n\left(\left(\frac{1+\beta}{2}\right)^{1/\alpha}-\left(\frac{1-\beta}{2}\right)^{1/\alpha}\right)\right)$ converges in distribution to  $S_\alpha(\ln(2),\beta,0)$. 
\end{theorem}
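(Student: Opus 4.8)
The plan is to derive Theorem \ref{thm: CLT a>1} from the previously established one-dimensional convergence in Theorem \ref{skewed CLT for a>1} by exploiting the fact that $\hat h$ is just a forward-shifted copy of $h$ which, crucially, is supported on a disjoint set of coordinates of the triangular array. First I would record the key structural fact: by Corollary \ref{cor: weak Sinai} and the construction, for each fixed $k$ the block of iterates $\big(f_k\circ T^{j-1}\big)_{1\le j\le 4^{k^2}}$ and $\big(g_k\circ T^{j-1}\big)_{1\le j\le 4^{k^2}}$ are independent, with $g_k=f_k\circ T^{4^{k^2}}$; since $D_k=4^{\alpha k}$ is tiny compared with the block length $4^{k^2}$, the averaged coboundary $h_k=f_k-\varphi_k$ is a function of the first block and $\hat h_k=h_k\circ T^{4^{k^2}}$ is a function of the second, so $h_k$ and $\hat h_k$ are independent for each $k$, and by the independence across different $k$ in the array, the whole process $\big(h\circ T^j\big)_j$ is independent of $\big(\hat h\circ T^j\big)_j$ up to an error coming from block boundaries.

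Next I would run the Skorohod-space argument in parallel with the proof of Theorem \ref{skewed CLT for a>1}: writing $c_+:=\big(\tfrac{1+\beta}{2}\big)^{1/\alpha}$ and $c_-:=\big(\tfrac{1-\beta}{2}\big)^{1/\alpha}$, so that $H=c_+h-c_-\hat h$, I would show that the pair $\Big(\tfrac{1}{n^{1/\alpha}}(S_n(h)+B_n),\ \tfrac{1}{n^{1/\alpha}}(S_n(\hat h)+B_n)\Big)$ converges jointly in distribution to a pair of \emph{independent} $S_\alpha(\ln 2,1,0)$ random variables. The marginal convergence of each coordinate is exactly Theorem \ref{skewed CLT for a>1} (for $S_n(\hat h)$ one uses $\hat h_k=h_k\circ T^{4^{k^2}}$ together with $T$-invariance of $m$, noting the shift changes $S_n$ only by boundary terms of the outermost active block, which are negligible after dividing by $n^{1/\alpha}$ — this is the same kind of estimate as in Lemma \ref{lem: BCT 1}). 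Joint convergence to the product law then follows from the asymptotic independence above: one truncates the array at level $k\le K$, uses exact independence of the truncated sums, lets $n\to\infty$ and then $K\to\infty$, controlling the tail contribution by the $L^1$ bound $\sum_{k>K}\int|h_k|\,dm\to 0$ from Lemma \ref{lem: h is well defined a>1} (and its $\hat h$ analogue). Finally, by the scaling and convolution properties of stable laws, if $W_1,W_2$ are independent $S_\alpha(\ln 2,1,0)$, then $c_+W_1-c_-W_2$ is $S_\alpha\big((\ln 2)(c_+^\alpha+c_-^\alpha)^{1/\alpha},\,\beta',\,0\big)$ with $c_+^\alpha+c_-^\alpha=1$ and skewness $\beta'=\tfrac{c_+^\alpha-c_-^\alpha}{c_+^\alpha+c_-^\alpha}=\beta$; combining this with the matching centering shift $B_n(c_+-c_-)/n^{1/\alpha}$ yields the claimed $S_\alpha(\ln 2,\beta,0)$ limit.

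I expect the main obstacle to be making the \emph{joint} convergence rigorous, i.e. upgrading the two marginal CLTs to convergence of the pair to the product measure. The subtlety is that $h$ and $\hat h$ are built from the same functions $f_k$ on overlapping ranges of $T$-iterates: within a single $k$-block of length $4^{k^2}$ the averaging windows of size $D_k=4^{\alpha k}$ used to define $\varphi_k$ and its $g_k$-counterpart stay well inside their respective halves, but when forming $S_n(h)$ with $n$ large, many blocks are only partially traversed, and one must check that the "seam" where the $h$-part of a block meets the $\hat h$-part contributes only $o(n^{1/\alpha})$ and does not create spurious correlation. I would handle this exactly as in the proof of Theorem \ref{thm: a>1} / \ref{skewed CLT for a>1}, isolating the range of active scales $k\asymp \log n$ via the Borel–Cantelli-type estimate, showing all but boundedly many blocks are dormant, and bounding the boundary discrepancy terms in $L^1$. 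Once the seams are shown negligible, the independence built into Proposition \ref{prop:KV20} / Corollary \ref{cor: weak Sinai} gives genuine independence of the dominant contributions, and the continuous mapping theorem applied to the addition map $D([0,1])^2\to D([0,1])$ (continuous on the relevant set of sample paths with no common jumps) finishes the argument.
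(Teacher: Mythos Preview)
Your overall architecture --- prove joint convergence of $\big(n^{-1/\alpha}(S_n(h)+B_n),\, n^{-1/\alpha}(S_n(\hat h)+B_n)\big)$ to an independent pair of $S_\alpha(\sqrt[\alpha]{\ln 2},1,0)$ variables, then apply the linear map $\Phi_\beta(x,y)=c_+x-c_-y$ and invoke the stable convolution identity --- is exactly what the paper does. Where your argument breaks is the mechanism you propose for the joint convergence. The fixed-$K$ truncation with $L^1$ tail control has the roles of ``main term'' and ``remainder'' reversed. For a fixed cutoff $K$, the partial sum $n^{-1/\alpha}S_n\big(\sum_{k\le K}h_k\big)$ tends to zero in probability (this is contained in Lemma \ref{lem: a>1 small} on $S_n^{(\mathbf S)}(h)$, since $K\le\frac{1}{2\alpha}\log n$ eventually), so there is nothing to extract from it; conversely the ``tail'' $\sum_{k>K}h_k$ carries the entire stable fluctuation and cannot be made small --- the crude bound $\|S_n(\psi)\|_1\le n\|\psi\|_1$ yields only $n^{1-1/\alpha}\|\sum_{k>K}h_k\|_1$, which diverges. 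Moreover, the claimed ``exact independence of the truncated sums'' fails for fixed $k$ and large $n$: once $n>4^{k^2}$ the iterates entering $S_n(h_k)$ leave the block on which Corollary \ref{cor: weak Sinai} guarantees independence from the $g_k$-block.

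The paper avoids both problems by using an $n$-dependent decomposition rather than a fixed one: it writes $S_n(h)+B_n=S_n^{(\mathbf S)}(h)+S_n^{(\mathbf M)}(f)+S_n^{(\mathbf L)}(f)-V_n(\varphi)$ (and the analogous decomposition for $\hat h,g,\hat\varphi$), shows that the $\mathbf S$, $\mathbf L$ and $V_n$ pieces go to zero in probability for both coordinates, and then observes that for $k$ in the middle range $\frac{1}{2\alpha}\log n<k\le\frac{1}{\alpha}\log n$ one has $4^{k^2}>n$, so $S_n^{(\mathbf M)}(f)$ and $S_n^{(\mathbf M)}(g)$ are \emph{genuinely} independent for each $n$, not merely asymptotically. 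Joint convergence of $\big(n^{-1/\alpha}(S_n^{(\mathbf M)}(f)+B_n),\,n^{-1/\alpha}(S_n^{(\mathbf M)}(g)+B_n)\big)$ then follows immediately from the marginal convergence in Proposition \ref{prop: middle a>1}, with no seam analysis needed. Your third paragraph actually lands on this idea (``isolating the range of active scales $k\asymp\log n$''), but you present it as a patch to the fixed-$K$ scheme rather than as its replacement; it is the whole argument. A minor point: Theorem \ref{thm: CLT a>1} is a one-dimensional CLT, not a functional one, so there is no need to work in $D[0,1]$ or invoke continuity of addition on Skorohod space --- the continuous mapping theorem on $\R^2$ suffices.
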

\subsection{Strategy of the proof of Theorems \ref{thm: main a<1} and \ref{thm: a>1}}\label{sub: strat}
The proof starts with writing for $\psi\in\{h,f,g\}$

\begin{equation}\label{eq: decomp}
	\W_n(\psi)=\W_n^{(\mathbf{S})}(\psi)+\W_n^{(\mathbf{M})}(\psi)+\W_n^{(\mathbf{L})}(\psi) 
\end{equation}
where 
\begin{align*}
	\W_n^{(\mathbf{M})}(\psi)&:= \sum_{k=\frac{1}{2\alpha}\log(n)+1}^{\frac{1}{\alpha}\log(n)}\W_n(\psi_k)\\
	\W_n^{(\mathbf{S})}(\psi)&:=	\sum_{k=1}^{\frac{1}{2\alpha}\log(n)} \W_n(\psi_k)\\
	\W_n^{(\mathbf{L})}(\psi)&:=\sum_{k=\frac{1}{\alpha}\log n+1}^\infty \W_n(\psi_k).			
\end{align*}
Writing $\left\|\cdot\right\|_\infty$ for the supremum norm, we first show that $\left\|	\W_n^{(\mathbf{S})}(h)\right\|_\infty$ and $\left\|	\W_n^{(\mathbf{L})}(h)\right\|_\infty$ converge to $0$ in probability, hence the two processes converge to the zero function in the uniform (and consequently the $J_1$) topology. 

Next we show that $\W_n^{(\mathbf{M})}(h)$ converges in distribution (in the $J_1$ topology) to the correct limiting process. 

Finally, we use Slutsky's theorem, also known as the convergence together lemma, in the (Polish) Skorohod $J_1$ topology, to deduce the weak convergence result for $\W_n(h)$. 
\begin{lemma}\label{lem: conv together D}
Let $A_n,B_n$ and $W$ be $D[0,1]$ valued processes such that $A_n\Rightarrow^d0$ in the uniform topology and $B_n\Rightarrow W$ in the $J_1$ topology then $A_n+B_n\Rightarrow^dW$ in the $J_1$ topology.  
\end{lemma}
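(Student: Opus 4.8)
The plan is to deduce this from the classical ``convergence together'' lemma (the metric-space version of Slutsky's theorem), exploiting the elementary fact that on $D[0,1]$ the Skorohod $J_1$ metric is dominated by the uniform metric. Concretely, writing the $J_1$ metric as $d_{J_1}(\phi,\psi)=\inf_{\lambda}\big(\|\lambda-\mathrm{id}\|_\infty\vee\|\phi-\psi\circ\lambda\|_\infty\big)$, the infimum being over increasing homeomorphisms $\lambda$ of $[0,1]$, the admissible choice $\lambda=\mathrm{id}$ gives
\[
d_{J_1}(\phi,\psi)\ \le\ \|\phi-\psi\|_\infty\qquad\text{for all }\phi,\psi\in D[0,1],
\]
and $(D[0,1],d_{J_1})$ is a separable (indeed Polish) metric space, so the convergence together lemma is available in it. I would also use that, since the limit $0$ in the hypothesis $A_n\Rightarrow^d 0$ is the deterministic zero function, this hypothesis is equivalent to $\|A_n\|_\infty\to 0$ in probability --- which is in any case the form in which it is actually verified in the applications of this lemma in Section~\ref{sub: strat}.

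The argument is then immediate. Regard $X_n:=B_n$ and $Y_n:=A_n+B_n$ as $D[0,1]$-valued random elements on a common probability space. By the two facts just recalled,
\[
d_{J_1}(Y_n,X_n)\ \le\ \|Y_n-X_n\|_\infty\ =\ \|A_n\|_\infty\ \longrightarrow\ 0 \quad\text{in probability};
\]
since moreover $X_n=B_n\Rightarrow W$ in the $J_1$ topology, the convergence together lemma (Billingsley, \emph{Convergence of Probability Measures}, Theorem~3.1) yields $Y_n=A_n+B_n\Rightarrow^d W$ in the $J_1$ topology, which is the assertion.

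I do not anticipate a real obstacle here: this is a soft, purely topological statement. The only points deserving a word of care are (i) the comparison $d_{J_1}\le\|\cdot\|_\infty$, which is exactly what lets a hypothesis phrased in the finer uniform topology be exploited to conclude in the coarser $J_1$ topology; and (ii) the identification of weak convergence to a constant with convergence in probability --- if one prefers to sidestep any measurability subtlety for $A_n$ as a map into the non-separable uniform Borel space, one may simply read ``$A_n\Rightarrow^d 0$ in the uniform topology'' as a synonym for $\|A_n\|_\infty\to 0$ in probability, and the proof goes through unchanged.
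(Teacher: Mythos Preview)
Your proposal is correct and matches the paper's own justification exactly: the paper simply remarks that the lemma follows from \cite[Theorem 3.1]{MR1700749} together with the fact that the uniform topology is stronger than the $J_1$ topology on $D[0,1]$. Your argument spells out precisely these two ingredients, including the inequality $d_{J_1}\le\|\cdot\|_\infty$ and the reduction of $A_n\Rightarrow^d 0$ to $\|A_n\|_\infty\to 0$ in probability.
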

We remark that Lemma \ref{lem: conv together D} follows from \cite[Theorem 3.1.]{MR1700749} and the fact that the uniform topology is stronger than the $J_1$ topology on $D[0,1]$. 


\section{Proof of Theorem \ref{thm: main a<1}}
We carry out the proof strategy as stated in Subsection \ref{sub: strat}. In what follows $(X,\B,m,T)$ is an ergodic, aperiodic probability preserving system, $\beta\in [-1,1]$ and $\alpha\in(0,1)$ is fixed and the functions $f_k$ are as in Theorem \ref{thm: main a<1}. 

This section has 2 subsections, in the first we prove results on $\W_n^{(\mathbf{S})}(f)$, $\W_n^{(\mathbf{M})}$ and $\W_n^{(\mathbf{L})}(f)$. These results combined prove Theorem \ref{thm: main a<1} in the totally skewed to the right ($\beta=1$) case. In the second subsection we show how to deduce Theorem \ref{thm: main a<1} from these results.
\subsection{Case $\beta=1$}
\begin{lemma}\label{lem: a<1 Large}
$\lim_{n\to\infty}m\left(\left\|\W_n^{(\mathbf{L})}(f)\right\|_\infty \neq 0\right)=0$. 
\end{lemma}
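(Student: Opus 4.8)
The plan is to show that the ``large $k$'' part $\W_n^{(\mathbf{L})}(f) = \sum_{k > \frac1\alpha\log n} \W_n(f_k)$ is negligible in the uniform norm with high probability. Since $\|\W_n^{(\mathbf L)}(f)\|_\infty = \frac{1}{n^{1/\alpha}}\max_{0\le \ell\le n}\big|\sum_{k>\frac1\alpha\log n} S_\ell(f_k)\big|$ and all $f_k\ge 0$ (as each is $Z_k(1)$-distributed, hence supported on $[2^k,4^k]$), the partial sums are monotone in $\ell$, so the supremum is attained at $\ell = n$ and it suffices to bound $m\big(\sum_{k>\frac1\alpha\log n} S_n(f_k) \neq 0\big)$.

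First I would observe that $\sum_{k>\frac1\alpha\log n} S_n(f_k)\neq 0$ forces $f_k\circ T^j\neq 0$ for some $j\in\{0,\dots,n-1\}$ and some $k>\frac1\alpha\log n$. By the union bound and $T$-invariance of $m$,
\[
m\Big(\sum_{k>\frac1\alpha\log n} S_n(f_k)\neq 0\Big)\le \sum_{k>\frac1\alpha\log n}\sum_{j=0}^{n-1} m\big(f_k\circ T^j\neq 0\big) = n\sum_{k>\frac1\alpha\log n} m(f_k\neq 0).
\]
Then, exactly as in the proof of Lemma \ref{lem: BCT 1}, $m(f_k\neq 0) = \P(Z_k(1)\neq 0)\le \P(X_k(1)>2^k)\le C\,2^{-\alpha k}/k$ by Proposition \ref{prop: ST}. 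Hence the right-hand side is at most
\[
Cn\sum_{k>\frac1\alpha\log n}\frac{2^{-\alpha k}}{k}\lesssim n\cdot 2^{-\alpha\cdot\frac1\alpha\log n} = n\cdot 2^{-\log n} = n\cdot n^{-1} = 1,
\]
which is only $O(1)$ — not good enough. The fix is that the threshold in the definition of $\W_n^{(\mathbf L)}$ is $\frac1\alpha\log n + 1$, i.e. the sum starts at $k\ge \frac1\alpha\log n + 1$, so the leading term is $n\cdot 2^{-\alpha(\frac1\alpha\log n+1)} = 2^{-\alpha}n\cdot n^{-1}$, and more carefully $n\sum_{k\ge \frac1\alpha\log n+1} 2^{-\alpha k}/k \lesssim \frac{n}{\log n}\cdot 2^{-\alpha(\frac1\alpha\log n+1)} = \frac{2^{-\alpha}}{\log n}\to 0$. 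So the geometric series, together with the extra factor $1/k \asymp 1/\log n$ from the denominator, delivers the decay.

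The main (really only) obstacle is getting the constant/threshold bookkeeping right: one must be careful that the lower summation index strictly exceeds $\frac1\alpha\log n$ so that the geometric tail $\sum_{k}2^{-\alpha k}$ beats the factor $n$, and one should note the harmless point that $\frac1\alpha\log n$ need not be an integer (replace it by its integer ceiling throughout — this only helps). Putting it together: $m(\|\W_n^{(\mathbf L)}(f)\|_\infty\neq 0)\le n\sum_{k>\frac1\alpha\log n} m(f_k\neq 0)\le C\frac{n}{\log n}2^{-\alpha\lceil\frac1\alpha\log n\rceil}\le \frac{C'}{\log n}\to 0$ as $n\to\infty$, which is the claim. (An entirely analogous argument handles $g$ in place of $f$, and hence $h$, since $h$ is a fixed linear combination of $f$ and $g$.)
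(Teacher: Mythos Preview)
Your proof is correct and follows essentially the same route as the paper: the inclusion $\left[\|\W_n^{(\mathbf{L})}(f)\|_\infty\neq 0\right]\subset\bigcup_{k>\frac1\alpha\log n}\bigcup_{j=0}^{n-1}[f_k\circ T^j\neq 0]$, the union bound with stationarity, the tail estimate $m(f_k\neq 0)\le C\,2^{-\alpha k}/k$, and the final bound $\lesssim 1/\log n$. One small remark: your preliminary step using $f_k\ge 0$ to reduce the supremum to $\ell=n$ is not needed, since the inclusion above holds for arbitrary signs (if the sup is nonzero then some partial sum is nonzero, hence some summand is nonzero); the paper proceeds directly via this inclusion.
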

\begin{proof}
We have the inclusion 
\[
\left[\left\|\W_n^{(\mathbf{L})}(f)\right\|_\infty \neq 0\right]\subset \bigcup_{k=\frac{1}{\alpha}\log(n)+1}^\infty \bigcup_{j=0}^{n-1}\left[f_k\circ T^j\neq 0\right].
\]
Therefore,
\begin{align*}
m\left(\left\|\W_n^{(\mathbf{L})}(f)\right\|_\infty\neq 0\right) &\leq \sum_{k=\frac{1}{\alpha}\log(n)+1}^\infty \sum_{j=0}^{n-1}m\left(f_k\circ T^j\neq 0\right)\\
&=\sum_{k=\frac{1}{\alpha}\log(n)+1}^\infty n \cdot m\left(f_k\neq 0\right)\\
&\leq C n\sum_{k=\frac{1}{\alpha}\log(n)+1}^\infty \frac{2^{-\alpha k}}{k},
\end{align*}
here the last inequality is from the proof of Lemma \ref{lem: BCT 1}. The result now follows since
\[
n\sum_{k=\frac{1}{\alpha}\log(n)+1}^\infty \frac{2^{-\alpha k}}{k}\lesssim \frac{1}{\log(n)}\xrightarrow[n\to\infty]{}0,
\]
\end{proof}

\begin{lemma}\label{lem: a<1 small}
$\left\|\W_n^{(\mathbf{S})}(f)\right\|_\infty\xrightarrow[n\to\infty]{}0$ in measure. 
\end{lemma}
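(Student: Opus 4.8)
\textbf{Proof proposal for Lemma \ref{lem: a<1 small}.}

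The plan is to control the uniform norm of $\W_n^{(\mathbf S)}(f)$ by an $L^1$ estimate, exploiting the fact that the summands in $\W_n^{(\mathbf S)}(f)$ come from the ``small'' scales $1\le k\le \frac{1}{2\alpha}\log(n)$, where the truncated variables $Z_k(1)$ are bounded by $4^k$. First I would observe that
\[
\left\|\W_n^{(\mathbf S)}(f)\right\|_\infty\le \frac{1}{n^{1/\alpha}}\sum_{j=0}^{n-1}\sum_{k=1}^{\frac{1}{2\alpha}\log(n)}\left|f_k\circ T^j\right|,
\]
since the partial sum process is, at each $t$, a sub-sum of the $j$-indexed terms and $[nt]\le n-1$. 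Taking expectations and using that $T$ preserves $m$, together with $\E|f_k|=\E(Z_k(1))$, this is at most $\frac{n}{n^{1/\alpha}}\sum_{k\le \frac{1}{2\alpha}\log(n)}\E(Z_k(1))$.

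Next I would bound $\E(Z_k(1))$. From Fact \ref{fact:1} we have $0\le Z_k(1)\le Y_k(1)\le 4^k$, and in fact $Z_k(1)$ is supported on $[2^k,4^k]$ and vanishes off the event $[X_k(1)\ge 2^k]$, whose probability is $\lesssim 2^{-\alpha k}/k$ by the tail bound used in Lemma \ref{lem: BCT 1} (Proposition \ref{prop: ST}). Hence
\[
\E(Z_k(1))\le 4^k\cdot\P(X_k(1)\ge 2^k)\lesssim 4^k\cdot\frac{2^{-\alpha k}}{k}=\frac{2^{(2-\alpha)k}}{k}.
\]
Summing a geometric-type series, $\sum_{k=1}^{m}\frac{2^{(2-\alpha)k}}{k}\lesssim 2^{(2-\alpha)m}$ with $m=\frac{1}{2\alpha}\log(n)$, which gives $2^{(2-\alpha)m}=n^{\frac{2-\alpha}{2\alpha}}$. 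Therefore
\[
\E\left\|\W_n^{(\mathbf S)}(f)\right\|_\infty\lesssim n^{1-\frac1\alpha}\cdot n^{\frac{2-\alpha}{2\alpha}}=n^{1-\frac1\alpha+\frac1\alpha-\frac12}=n^{1/2}.
\]
This does \emph{not} go to zero, so a crude $L^1$ bound via the absolute-value sum is too lossy, and \textbf{this is the main obstacle}: one cannot simply bound $Z_k(1)$ by its maximum $4^k$. The fix is to keep the expectation $\E(Z_k(1))$ itself, which for a one-sided $\alpha$-stable tail with $\alpha<1$ is actually governed by the tail at scale $4^k$: more carefully, $\E\big(X_k(1)1_{[2^k\le X_k(1)\le 4^k]}\big)$ should be estimated by integrating the tail $\P(X_k(1)>t)\sim \frac{c_\alpha\sigma_k^\alpha}{t^\alpha}=\frac{c_\alpha}{k\,t^\alpha}$ over $t\in[2^k,4^k]$, giving $\E(Z_k(1))\lesssim \frac{1}{k}\cdot\frac{(4^k)^{1-\alpha}}{1-\alpha}\asymp \frac{2^{2k(1-\alpha)}}{k}$. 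Then $\sum_{k\le m}\frac{2^{2k(1-\alpha)}}{k}\lesssim 2^{2m(1-\alpha)}=n^{\frac{1-\alpha}{\alpha}}$, and
\[
\E\left\|\W_n^{(\mathbf S)}(f)\right\|_\infty\lesssim n^{1-\frac1\alpha}\cdot n^{\frac{1-\alpha}{\alpha}}=n^{1-\frac1\alpha+\frac1\alpha-1}=n^0=O(1),
\]
which is still only bounded, not $o(1)$. So even the sharp first-moment bound is borderline; I would therefore truncate the upper cutoff more aggressively — note $\W_n^{(\mathbf S)}$ uses $k\le\frac{1}{2\alpha}\log n$, and with the sharp estimate the dominant term is $k=\frac{1}{2\alpha}\log n$ contributing $\asymp n^{\frac{1-\alpha}{\alpha}}/\log n$, i.e. the sum is really $\lesssim n^{\frac{1-\alpha}{\alpha}}/\log n$, yielding $\E\|\W_n^{(\mathbf S)}(f)\|_\infty\lesssim 1/\log n\to 0$. (The key point is that the geometric sum is dominated by its last term up to a constant, and that last term carries the $1/k=2\alpha/\log n$ factor.) Markov's inequality then converts this $L^1$ bound into convergence in measure, completing the proof.

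If the $1/\log n$ gain turns out to be too delicate to extract cleanly from the endpoint term, the robust alternative is to split $\W_n^{(\mathbf S)}(f)=\sum_{k\le k_0}\W_n(f_k)+\sum_{k_0<k\le \frac{1}{2\alpha}\log n}\W_n(f_k)$ for a slowly growing $k_0=k_0(n)\to\infty$: the second piece is handled by the sharp first-moment estimate above (now summed only over $k>k_0$, hence $\lesssim n^{\frac{1-\alpha}{\alpha}}\cdot 2^{-2k_0(1-\alpha)}\cdot(\text{stuff})\to 0$ after rescaling, since the rescaling exactly cancels the $k=\frac1{2\alpha}\log n$ value and the $2^{-2k_0(1-\alpha)}$ provides the decay), and the first piece involves only finitely many (for each fixed realization, eventually constant) scales $f_1,\dots,f_{k_0}$, each of which is a fixed bounded function, so $\W_n(f_k)\le n^{-1/\alpha}\cdot n\cdot\|f_k\|_\infty\to 0$ uniformly. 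Either way, Markov's inequality finishes: $m(\|\W_n^{(\mathbf S)}(f)\|_\infty>\e)\le \e^{-1}\E\|\W_n^{(\mathbf S)}(f)\|_\infty\to 0$.
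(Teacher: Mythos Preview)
Your proposal is correct and, once past the exploratory detours, lands on exactly the paper's argument: bound $\|\W_n^{(\mathbf S)}(f)\|_\infty$ by the full (nonnegative) sum at $t=1$, use the sharp first-moment estimate $\E(Z_k(1))\le \E\big(X_k(1)\mathbf 1_{[0\le X_k(1)\le 4^k]}\big)\lesssim \tfrac{4^{k(1-\alpha)}}{k}$ (this is precisely Corollary~\ref{cor: moments} in the appendix), sum the geometric-type series to get $\E\|\W_n^{(\mathbf S)}(f)\|_\infty\lesssim 1/\log n$, and finish with Markov. The paper's write-up is just more streamlined --- it invokes $f_k\ge 0$ to identify the sup with $t=1$ and cites the appendix corollary directly --- so your fallback splitting argument is not needed.
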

\begin{proof}

Recall that for all $k\in\N$, $f_k$ is distributed as $Z_k(1)$, whence $f_k\geq 0$  and 
\begin{align*}
 \left\|\W_n^{(\mathbf{S})}(f)\right\|_\infty&=\max_{0\leq t\leq 1}\left| \frac{1}{n^{1/\alpha}}	\sum_{k=1}^{\frac{1}{2\alpha}\log(n)}\left(\sum_{j=0}^{[nt]-1}f_k\circ T^j \right)\right|\\
&= \frac{1}{n^{1/\alpha}}	\sum_{k=1}^{\frac{1}{2\alpha}\log(n)}\left(\sum_{j=0}^{n-1}f_k\circ T^j \right).
\end{align*}

For every $k,j\in\Z$, $f_k\circ T^j$ is distributed as $Z_k(1)$ and $$0\leq Z_k(1)\leq X_k(1)\mathbf{1}_{\left[0\leq X_k(1)\leq 4^k\right]}.$$ 
By Corollary \ref{cor: moments}, there exists $C>0$ such that for all $k,j\in\N$,
\begin{align*}
	\left\|f_k\circ T^j\right\|_{1}&= \E\left(Z_k(1)\right)\\
	&\leq \E\left(X_k(1) \mathbf{1}_{\left[0\leq X_k(1)\leq 4^k\right]}\right)\leq C\frac{4^{k(1-\alpha)}}{k}.
\end{align*} 
Consequently, 

\begin{align*}
\left\|  \left\|\W_n^{(\mathbf{S})}(f)\right\|_\infty\right\|_1 &\leq n^{-\frac{1}{\alpha}}\sum_{k=1}^{\frac{1}{2\alpha}\log(n)}\sum_{j=0}^{n-1}	\left\|f_k\circ T^j\right\|_1\\
&\leq n^{-\frac{1}{\alpha}}\sum_{k=1}^{\frac{1}{2\alpha}\log(n)}Cn\frac{4^{(1-\alpha)k}}{k}\lesssim \frac{1}{\log(n)}\xrightarrow[n\to\infty]{}0.
\end{align*}
A standard application of the Markov inequality shows that $\left\|\W_n^{(\mathbf{S})}(f)\right\|_\infty$ converges to $0$ in measure, concluding the proof.



\end{proof}
The rest of this subsection is concerned with the proof of the following result for $\W_n^{(\mathbf{M})}(f)$. 
 
\begin{proposition}\label{prop: a<1 middle h}
$\W_n^{(\mathbf{M})}(f)$ converges in distribution to $\W$, an $S_\alpha\left(\sqrt[\alpha]{\ln(2)},1,0\right)$ L\'evy motion. 
\end{proposition}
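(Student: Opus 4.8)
The plan is to establish weak convergence of $\W_n^{(\mathbf{M})}(f)$ in the $J_1$ topology by an increment-by-increment analysis, reducing the problem to a triangular-array functional CLT for independent summands. First I would fix the structure: by Corollary~\ref{cor: weak Sinai} and the way $f_k$ is embedded, for each $n$ the collection $\{f_k\circ T^j : \tfrac{1}{2\alpha}\log n < k \le \tfrac1\alpha\log n,\ 0\le j\le n-1\}$ consists of independent random variables (the blocks $4^{k^2}$ dwarf $n$, so within the range of $k$'s relevant at stage $n$ there is no overlap, and for distinct $k$ the arrays are independent by construction). Thus $\W_n^{(\mathbf{M})}(f)(t)=n^{-1/\alpha}\sum_{k}\sum_{j=0}^{[nt]-1} f_k\circ T^j$ is, for each fixed $n$, equal in distribution to $n^{-1/\alpha}S_{[nt]}$ of a genuine sum of independent (row-dependent) random variables $\xi_{n,i}:=\sum_{k} Z_k(i)$ for $1\le i\le n$, where the inner sum runs over $k\in(\tfrac1{2\alpha}\log n,\tfrac1\alpha\log n]$.

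Next I would verify the hypotheses of a functional stable limit theorem for i.i.d.-within-row triangular arrays — the natural reference being the $J_1$ functional convergence criterion behind \cite[Corollary 7.1]{MR2271424} or Skorokhod's classical theorem: one needs (i) convergence of the truncated Lévy measure $n\,\P(\xi_{n,1}\in \cdot\,)$ to the Lévy measure $c\,\alpha\, x^{-\alpha-1}\ind_{x>0}\,dx$ of the totally-skewed $S_\alpha(\sqrt[\alpha]{\ln 2},1,0)$ law on $(\delta,\infty)$ for every $\delta>0$ (there is no negative part since $\beta=1$ and all $Z_k\ge 0$), (ii) negligibility of the small jumps, i.e. $\lim_{\delta\to0}\limsup_n n\,\E(\xi_{n,1}^2\ind_{\xi_{n,1}\le\delta})=0$ (for $\alpha<1$ this is easy since $\xi_{n,1}$ is supported away from small values once we discard the genuinely small $k$), and (iii) a centering computation showing the drift $n\,\E(\xi_{n,1}\ind_{\xi_{n,1}\le 1})$ is $o(n^{1/\alpha})$ — again automatic for $\alpha<1$ where no centering is needed at all. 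The main computational input is (i): $n\,\P(\xi_{n,1}>x)=n\sum_k \P(Z_k(1)>x)$, and since $Z_k(1)$ is a discretized truncation of an $S_\alpha(\sigma_k,1,0)$ variable with $\sigma_k^\alpha=1/k$, the tail $\P(X_k(1)>x)\sim C_\alpha \sigma_k^\alpha x^{-\alpha} = C_\alpha x^{-\alpha}/k$ on the relevant window $2^k\le x\le 4^k$; summing $\tfrac1k$ over $k\in(\tfrac1{2\alpha}\log n,\tfrac1\alpha\log n]$ against the constraint that $x$ must lie in $[2^k,4^k]$ for the $k$-th term to contribute, together with the factor $n$, should produce exactly the constant $\ln 2$ in front of $x^{-\alpha}$ (this is the point of the range $[\tfrac1{2\alpha}\log n,\tfrac1\alpha\log n]$: for $x\asymp n^{1/\alpha}$ the admissible $k$ form an interval of $\tfrac1\alpha\log n - \log x/\log 4$ terms, and $\sum 1/k$ over it is $\ln$ of a ratio that telescopes to the right value). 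I would carry this out carefully, using the precise constant $C_\alpha$ for stable tails and being attentive to the $4^{-k}$ discretization error from Fact~\ref{fact:1}, which is negligible.

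With the one-dimensional tail/centering estimates in hand, the functional convergence follows from the standard theory: finite-dimensional distributions converge to those of the Lévy motion because increments over disjoint time intervals come from disjoint, hence independent, blocks of the $\xi_{n,i}$'s and each converges to the right $S_\alpha$ increment law by the scalar stable limit theorem, and $J_1$-tightness follows from the single-array moment/tail bounds (positivity of the summands makes tightness in $J_1$ particularly clean — there is no cancellation, and the classical tightness criterion of \cite{MR1700749} for nonnegative increment processes applies). I would then invoke the cited $J_1$ functional CLT to conclude $\W_n^{(\mathbf{M})}(f)\Rightarrow^d\W$.

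The step I expect to be the main obstacle is the precise asymptotic evaluation in (i) — showing that $n\sum_{k}\P(Z_k(1)>x)$ converges, uniformly enough in $x$ on compacts of $(0,\infty)$ after the $n^{-1/\alpha}$ scaling, to $\ln(2)\,\alpha x^{-\alpha}$ with exactly the right constant, handling both the boundary effects at $k\approx\tfrac1{2\alpha}\log n$ and $k\approx\tfrac1\alpha\log n$ (where the truncation window $[2^k,4^k]$ starts or stops covering the point $xn^{1/\alpha}$) and the discretization. Everything else — independence, negligibility of small jumps, absence of centering for $\alpha<1$, and the passage from scalar to functional convergence — is comparatively routine given the machinery already set up in the excerpt.
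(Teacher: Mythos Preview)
Your plan is viable but the paper follows a cleaner route that sidesteps exactly the computation you flag as the main obstacle. Like you, the paper first observes (Lemma~\ref{lem: a<1 equal dist}) that $\W_n^{(\mathbf{M})}(f)$ is equidistributed with the process built from the independent row variables $\xi_{n,i}=\sum_k Z_k(i)$. But rather than verify the triangular-array L\'evy-measure conditions for these discretised, truncated variables, the paper proves $\|\W_n^{(\mathbf{M})}(X)-\W_n^{(\mathbf{M})}(Z)\|_\infty\to 0$ in probability (Lemma~\ref{lem: a<1 middle Z}(a)), handling the discretisation gap $Y-Z$, the lower-tail part $X\mathbf{1}_{[X\le 2^k]}$, and the upper-tail part $X\mathbf{1}_{[X>4^k]}$ by separate elementary estimates. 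Once one works with the untruncated stable variables, the row sum $V_n(j)=\sum_k X_k(j)$ is \emph{exactly} $S_\alpha(A_n,1,0)$ with $A_n^\alpha=\sum_k 1/k\to\ln 2$, by closure of the stable class under independent sums, and the functional limit follows immediately from the i.i.d.\ case \cite[Corollary~7.1]{MR2271424} after the trivial rescaling $A_n\to(\ln 2)^{1/\alpha}$. The delicate boundary analysis you anticipate --- how the truncation window $[2^k,4^k]$ interacts with the summation range when computing $n\sum_k\P(Z_k(1)>xn^{1/\alpha})$ --- is thereby traded for a soft approximation argument and never arises. One small caution if you pursue your route: the displayed identity $n\,\P(\xi_{n,1}>x)=n\sum_k\P(Z_k(1)>x)$ is not literal; you would need to argue first that two of the $Z_k(1)$ are simultaneously nonzero with probability $o(1/n)$, which follows from $\sum_k\P(Z_k(1)\neq 0)\lesssim n^{-1/2}/\log n$.
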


For $V\in\{X,Y,Z\}$ and $k,n\in\N$, define
\[
S_n\big(V_k\big):=\sum_{j=1}^n V_k(j).
\] 
We introduce the following $D[0,1]$ valued processes on $(\Omega,\mathcal{F},\P)$:
\begin{align*}
\W_n^{(\mathbf{M})}(Z)(t)&:=\frac{1}{n^{1/\alpha}}\sum_{k=\frac{1}{2\alpha}\log(n)+1}^{\frac{1}{\alpha}\log(n)}S_{[nt]}\left(Z_k(\cdot)\right),\\ \W_n^{(\mathbf{M})}(Y)(t)&:=\frac{1}{n^{1/\alpha}}\sum_{k=\frac{1}{2\alpha}\log(n)+1}^{\frac{1}{\alpha}\log(n)}S_{[nt]}\left(Y_k(\cdot)\right)\ \ \text{and}\\
\W_n^{(\mathbf{M})}(X)(t)&:=\frac{1}{n^{1/\alpha}}\sum_{k=\frac{1}{2\alpha}\log(n)+1}^{\frac{1}{\alpha}\log(n)}S_{[nt]}\left(X_k(\cdot)\right).
\end{align*}	
The reason for its definition is the following. 
\begin{lemma}\label{lem: a<1 equal dist}
$\W_n^{(\mathbf{M})}(f)$ and $\W_n^{(\mathbf{M})}(Z)$ are equally distributed. 
\end{lemma}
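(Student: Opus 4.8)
The plan is to exhibit both processes as the image of one and the same measurable map applied to two finite families of random variables that have the same joint law. First I would re-index the inner sum: since $[nt]\le n$ for $t\in[0,1]$, one has
\[
\W_n^{(\mathbf{M})}(f)(t)=\frac{1}{n^{1/\alpha}}\sum_{k=\frac{1}{2\alpha}\log n+1}^{\frac1\alpha\log n}\ \sum_{j=1}^{[nt]} f_k\circ T^{j-1},
\qquad
\W_n^{(\mathbf{M})}(Z)(t)=\frac{1}{n^{1/\alpha}}\sum_{k=\frac{1}{2\alpha}\log n+1}^{\frac1\alpha\log n}\ \sum_{j=1}^{[nt]} Z_k(j).
\]
Thus $\W_n^{(\mathbf{M})}(f)$ depends only on the finite family $\mathcal A_f:=\big(f_k\circ T^{j-1}\big)$, with $k$ ranging over the ``middle block'' $I_n:=\{\tfrac{1}{2\alpha}\log n+1,\dots,\tfrac1\alpha\log n\}$ and $1\le j\le n$, while $\W_n^{(\mathbf{M})}(Z)$ depends only on $\mathcal A_Z:=\big(Z_k(j)\big)$ over the same index set, and in both cases through the \emph{same} explicit map $\Phi$.

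Second, I would check that $\Phi$, the map sending a real family $(a_{k,j})_{k\in I_n,\,1\le j\le n}$ to the cadlag step function $t\mapsto n^{-1/\alpha}\sum_{k\in I_n}\sum_{j=1}^{[nt]}a_{k,j}$, is Borel measurable from $\R^{I_n\times\{1,\dots,n\}}$ into $(D[0,1],J_1)$. This is routine: $\Phi$ takes values in the finite-dimensional family of step functions with jumps only at the points $j/n$, and on that family the $J_1$ topology agrees with coordinatewise convergence of the jump heights, on which $\Phi$ is continuous; Borel measurability follows. In particular $\W_n^{(\mathbf{M})}(f)=\Phi(\mathcal A_f)$ and $\W_n^{(\mathbf{M})}(Z)=\Phi(\mathcal A_Z)$.

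Third, I would invoke Corollary \ref{cor: weak Sinai}. For every $k\in I_n$ we have $k>\tfrac{1}{2\alpha}\log n$, hence $4^{k^2}\ge 4^{(\log n)^2/(4\alpha^2)}\ge n$ as soon as $\log n\ge 2\alpha^2$; therefore, for all large $n$, the index set $I_n\times\{1,\dots,n\}$ is contained in $\{(k,j):k\in\N,\ 1\le j\le 4^{k^2}\}$. Since Corollary \ref{cor: weak Sinai} gives that $\big(f_k\circ T^{j-1}\big)_{k\in\N,\,1\le j\le 4^{k^2}}$ has the same joint distribution as $\big(Z_k(j)\big)_{k\in\N,\,1\le j\le 4^{k^2}}$, the restricted families satisfy $\mathcal A_f\eqd\mathcal A_Z$. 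Applying the measurable map $\Phi$ to both sides yields $\W_n^{(\mathbf{M})}(f)=\Phi(\mathcal A_f)\eqd\Phi(\mathcal A_Z)=\W_n^{(\mathbf{M})}(Z)$ for all sufficiently large $n$, which is all that is used later.

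There is no serious obstacle here; the only points needing a word of care are the re-indexing $j\mapsto j-1$ together with the bound $[nt]\le n\le 4^{k^2}$ for $k\in I_n$ (so that the time window actually fits inside the array furnished by Corollary \ref{cor: weak Sinai}), and the elementary Borel measurability of $\Phi$ into the Skorohod space.
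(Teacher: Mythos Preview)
Your argument is correct and follows essentially the same route as the paper: both express $\W_n^{(\mathbf{M})}(f)$ and $\W_n^{(\mathbf{M})}(Z)$ as the image of the same (measurable, in fact continuous) map applied to the arrays $\big(f_k\circ T^{j-1}\big)$ and $\big(Z_k(j)\big)$, and then invoke the equality of joint laws from Corollary~\ref{cor: weak Sinai}. Your verification that $n\le 4^{k^2}$ for $k\in I_n$ (so the relevant indices lie inside the array furnished by that corollary) is a point the paper leaves implicit, and your ``for all large $n$'' caveat is harmless since only distributional convergence is at stake.
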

\begin{proof}
By the definition of $f_k$, $\left\{f_k\circ T^{j-1}:\ k\in\N,\ 1\leq j\leq  4^k\right\}$ and $\left\{Z_k(j):\ k\in\N,\ 1\leq j\leq 4^k\right\}$ are equally distributed.  

The  $G_n: \prod_{k\in\N} \R^{2\cdot 4^k}\to D[0,1]$ defined by for all $0\leq t\leq 1$,
\[
G_n\left(\left(x_k(j)\right)_{\ k\in\N,\ 1\leq j\leq 4^k}\right)(t):=\frac{1}{n^{1/\alpha}}\sum_{k=\frac{1}{2\alpha}\log(n)+1}^{\frac{1}{\alpha}\log(n)}\sum_{j=1}^{[nt]}x_k\big(j\big)
\]
is continuous. 

As $G_n\left(\left(f_k\circ T^{j-1}\right)_{\ k\in\N,\ 1\leq j\leq  4^k}\right)=\W_n^{(\mathbf{M})}(f)$ and similarly \\$G_n\left(\left(Z_k(j)\right)_{\ k\in\N,\ 1\leq j\leq 4^k}\right)=\W_n^{(\mathbf{M})}(Z)$, the claim follows from the continuous mapping theorem.  
\end{proof}
Using this equality of distributions, it suffices to show that $\W_n^{(\mathbf{M})}(Z)$  converges in distribution to an $S_\alpha(1,1,0)$ L\'evy motion. This follows from the the following and the convergence together Lemma (Lemma \ref{lem: conv together D}). 

\begin{lemma}\label{lem: a<1 middle Z}
$ $
\begin{itemize}
	\item[(a)] $
\left\| \W_n^{(\mathbf{M})}(X)-\left(\W_n^{(\mathbf{M})}(Z)\right)\right\|_\infty\xrightarrow[n\to\infty]{}0$ in measure. 
\item[(b)] $\W_n^{(\mathbf{M})}(X)$ converges in distribution to an $S_\alpha\left(\sqrt[\alpha]{\ln(2)},1,0\right)$ L\'evy motion.
\end{itemize}
Consequently  $\W_n^{(\mathbf{M})}(Z)$ converges in distribution to an $S_\alpha\left(\sqrt[\alpha]{\ln(2)},1,0\right)$ L\'evy motion
\end{lemma}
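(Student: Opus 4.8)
The plan is to prove Lemma~\ref{lem: a<1 middle Z} in the order (a) then (b), and then combine them via the convergence-together lemma (Lemma~\ref{lem: conv together D}) to get the final assertion. For part (b), the key observation is that $\W_n^{(\mathbf M)}(X)(t)=\frac{1}{n^{1/\alpha}}\sum_{k}\sum_{j=1}^{[nt]}X_k(j)$ is a partial-sum process of a triangular array of independent (across both $k$ and $j$) $S_\alpha$ random variables. For a fixed $n$, $\sum_{k=\frac{1}{2\alpha}\log n+1}^{\frac1\alpha\log n}X_k(j)$ is itself $S_\alpha$ with dispersion $\big(\sum_k \sigma_k^\alpha\big)^{1/\alpha}=\big(\sum_{k=\frac{1}{2\alpha}\log n+1}^{\frac1\alpha\log n}\frac1k\big)^{1/\alpha}$, skewness $1$ (since all summands are totally skewed to the right, and stability of $S_\alpha$ under sums with matching skewness), and (for $\alpha<1$) shift $0$. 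A Riemann-sum / harmonic-series estimate gives $\sum_{k=\frac{1}{2\alpha}\log n+1}^{\frac1\alpha\log n}\frac1k \to \ln\!\big(\frac{1/\alpha}{1/(2\alpha)}\big)=\ln 2$. Hence for each fixed $n$ the i.i.d.\ summands $\xi_j^{(n)}:=\sum_k X_k(j)$ are $S_\alpha(\sigma_n,1,0)$ with $\sigma_n\to\sqrt[\alpha]{\ln 2}$; but to invoke \cite[Corollary 7.1.]{MR2271424} directly I would rather remove the $n$-dependence by noting that the process equals in law $n^{-1/\alpha}S_{[nt]}$ of i.i.d.\ $S_\alpha(\sigma_n,1,0)$ variables, rescale by $\sigma_n$, apply the functional stable limit theorem of \cite{MR2271424} to get convergence to an $S_\alpha(1,1,0)$ L\'evy motion, and then use $\sigma_n\to\sqrt[\alpha]{\ln 2}$ together with the scaling property and a continuous-mapping argument. (A clean alternative is a direct characteristic-function computation for finite-dimensional distributions plus the tightness criterion in $D[0,1]$ for such triangular arrays.)

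For part (a), I would estimate $\big\|\,\|\W_n^{(\mathbf M)}(X)-\W_n^{(\mathbf M)}(Z)\|_\infty\,\big\|_1$. Write $X_k(j)-Z_k(j)=(X_k(j)-Y_k(j))+(Y_k(j)-Z_k(j))$. The second piece is bounded by $4^{-k}$ pointwise by Fact~\ref{fact:1}, contributing at most $n^{-1/\alpha}\sum_{k=\frac{1}{2\alpha}\log n+1}^{\frac1\alpha\log n} n\cdot 4^{-k}\lesssim n^{1-1/\alpha}\cdot 4^{-\frac{1}{2\alpha}\log n}$, which (since $4^{-\frac{1}{2\alpha}\log n}=n^{-1/\alpha}$ in base-$2$ logs, using $\log=\log_2$) is $\lesssim n^{1-2/\alpha}\to 0$ for $\alpha<1$ — in any case it is easily summable-type small. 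The first piece is $X_k(j)\mathbf 1_{[X_k(j)<2^k]}+X_k(j)\mathbf 1_{[X_k(j)>4^k]}$; its $L^1$ norm I would control using the moment bounds already quoted (Corollary~\ref{cor: moments}, Corollary~\ref{cor: moments below a}, Proposition~\ref{prop: ST}) for truncated $S_\alpha$ variables, giving something like $\E|X_k(1)|\mathbf 1_{[X_k(1)<2^k]}\lesssim \frac{2^{k(1-\alpha)}}{k}$ and $\E|X_k(1)|\mathbf 1_{[X_k(1)>4^k]}\lesssim \frac{4^{k(1-\alpha)}}{k}$. Summing over $k\in(\frac{1}{2\alpha}\log n,\frac1\alpha\log n]$ and multiplying by $n\cdot n^{-1/\alpha}$ should yield a bound $\lesssim \frac{1}{\log n}$ or similar, hence convergence to $0$ in $L^1$ and then in measure by Markov's inequality.

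Finally, combining: by Lemma~\ref{lem: a<1 equal dist} (applied together with the bound in (a)), $\W_n^{(\mathbf M)}(Z)=\W_n^{(\mathbf M)}(X)-(\W_n^{(\mathbf M)}(X)-\W_n^{(\mathbf M)}(Z))$, where the first term converges in distribution (in $J_1$) to the $S_\alpha(\sqrt[\alpha]{\ln 2},1,0)$ L\'evy motion by (b) and the second converges to $0$ in $\|\cdot\|_\infty$, hence in the uniform topology, by (a); Lemma~\ref{lem: conv together D} then gives the stated convergence of $\W_n^{(\mathbf M)}(Z)$. The main obstacle I anticipate is part (b): making rigorous the passage from the $n$-dependent dispersion $\sigma_n$ to the limit — one must be careful that \cite[Corollary 7.1.]{MR2271424} is stated for a fixed i.i.d.\ sequence, whereas here the summands' law drifts with $n$; the cleanest fix is to factor out $\sigma_n$, apply the theorem once to get an $S_\alpha(1,1,0)$ motion, and then use $\sigma_n\to\sqrt[\alpha]{\ln 2}$ with the fact that $w\mapsto \sigma_n w$ is a continuous bijection of $D[0,1]$ converging uniformly on compacts to multiplication by $\sqrt[\alpha]{\ln 2}$, so a Slutsky-type argument closes it. A secondary subtlety is checking that for $\alpha<1$ no centering is needed, i.e.\ that the shift parameter stays $0$ under the sums and truncations in play (for $\alpha<1$ the relevant $S_\alpha(\sigma,1,0)$ variables are genuinely positive and no log-correction arises, unlike the $\alpha\ge1$ cases handled elsewhere).
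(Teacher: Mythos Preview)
Your approach to part~(b) is essentially identical to the paper's: form $V_n(j)=\sum_k X_k(j)$, observe these are i.i.d.\ $S_\alpha(A_n,1,0)$ with $A_n^\alpha\to\ln 2$, rescale to remove the $n$-dependence, apply \cite[Corollary~7.1]{MR2271424} once, and use $A_n\to\sqrt[\alpha]{\ln 2}$ with a continuous-mapping/Slutsky step. No issue there.

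Part~(a), however, has a genuine gap. You propose to control everything in $L^1$, and in particular you write
\[
\E\bigl(X_k(1)\mathbf 1_{[X_k(1)>4^k]}\bigr)\lesssim \tfrac{4^{k(1-\alpha)}}{k}.
\]
But for $\alpha<1$ an $S_\alpha(\sigma,1,0)$ variable has no first moment: its tail behaves like $x^{-\alpha}$, so $\E\bigl(X_k(1)\mathbf 1_{[X_k(1)>K]}\bigr)=\infty$ for every $K$. Corollary~\ref{cor: moments below a} only gives the bound you want for $r<\alpha$, not $r=1$. So the $L^1$ route cannot close over the ``large'' piece $X_k\mathbf 1_{[X_k>4^k]}$.

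The paper's fix is to treat this piece differently: it bounds the \emph{probability} that it is nonzero,
\[
\P(\mathrm{III}_n\neq 0)\le \sum_{k}\sum_{j=1}^n \P(X_k(j)>4^k)\lesssim n\sum_k \tfrac{4^{-\alpha k}}{k}\lesssim \tfrac{1}{\log n},
\]
so $\mathrm{III}_n\to 0$ in probability without any moment bound. Your $L^1$ treatment of the other two pieces (the $4^{-k}$ discretisation error and the low truncation $X_k\mathbf 1_{[X_k\le 2^k]}$) is fine; the paper handles the latter via an $r$-th moment with $\alpha<r<1$, but $r=1$ as you suggest also works there since $X_k\mathbf 1_{[X_k\le 2^k]}$ is bounded. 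The only missing ingredient is swapping the $L^1$ argument for a $\P(\cdot\neq 0)$ argument on the unbounded tail.
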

\begin{proof}[Proof of Lemma \ref{lem: a<1 middle Z}.(a)]
For every $k,m\in\N$,\footnote{Here note that as $\alpha<1$ a skewed $\alpha$ stable random variable is non-negative.}
\[
0\leq Z_k(m)\leq Y_k(m)\leq X_k(m).
\]
We deduce from this and the triangle inequality that
\[
\left\| \W_n^{(\mathbf{M})}(X)-\W_n^{(\mathbf{M})}(Z)\right\|_\infty\leq n^{-1/\alpha}\sum_{k=\frac{1}{2\alpha}\log(n)+1}^{\frac{1}{\alpha}\log(n)}\left(S_n(X_k)-S_n(Z_k)\right). 
\]
We will show that the right hand side converges to $0$ in probability. 

Firstly $0<\alpha<1$, hence for all $k>\frac{1}{2\alpha}\log(n)$, $n<4^k$. Consequently by Fact \ref{fact:1},
\[
0\leq S_n(Y_k)-S_n(Z_k)\leq n4^{-k}\leq 1. 
\]
We conclude that 
\begin{equation}
I_n:=n^{-1/\alpha}\sum_{k=\frac{1}{2\alpha}\log(n)+1}^{\frac{1}{\alpha}\log(n)}\left(S_n(Y_k)-S_n(Z_k)\right)\lesssim \frac{\log(n)}{n^{1/\alpha}}. 
\end{equation}
Secondly, 
\[
n^{-1/\alpha}\sum_{k=\frac{1}{2\alpha}\log(n)+1}^{\frac{1}{\alpha}\log(n)}\left(S_n(X_k)-S_n(Y_k)\right)=\mathrm{II}_n+\mathrm{III}_n
\]
where 
\[
\widetilde{V}_k(m):=X_k(m)1_{\left[X_k(m)>4^k\right]}\ \ \text{and}\ \ \  \widehat{V}_k(m):=X_k(m)1_{\left[X_k(m)\leq 2^k\right]}, 
\]
and
\begin{align*}
\mathrm{II}_n&:=n^{-1/\alpha}\sum_{k=\frac{1}{2\alpha}\log(n)+1}^{\frac{1}{\alpha}\log(n)}S_n(\widehat{V}_k)\\
\mathrm{III}_n&:=n^{-1/\alpha}\sum_{k=\frac{1}{2\alpha}\log(n)+1}^{\frac{1}{\alpha}\log(n)}S_n(\widetilde{V}_k).
\end{align*}
Similarly to the proof of Lemma \ref{lem: a<1 Large}, 
\begin{align*}
\P\left(\mathrm{III}_n\neq 0\right)&\leq \sum_{k=\frac{1}{2\alpha}\log(n)+1}^{\frac{1}{\alpha}\log(n)}\sum_{j=1}^n\P\left(X_k(j)>4^k\right)\\
&\lesssim Cn\sum_{k=\frac{1}{2\alpha}\log(n)+1}^{\frac{1}{\alpha}\log(n)}\frac{4^{-k\alpha}}{k}\lesssim \frac{1}{\log(n)},
\end{align*}
showing that $\mathrm{III}_n\xrightarrow[n\to\infty]{}0$ in probability. 

We now fix $\alpha<r<1$ and $\varepsilon>0$. Note that by Corollary \ref{cor: moments} there exists a global constant $C$ so that for every $k$ and $m$,
\[
\E\left(\left(\widehat{V}_k(m)\right)^r\right)\leq C\frac{2^{k(r-\alpha)}}{k}. 
\]
By Markov's inequality and the triangle inequality for the $r$'th moments, 
\begin{align*}
\P\left(\mathrm{II}_n>\varepsilon\right)&\leq \E\left(\left(\mathrm{II}_n\right)^r\right)\varepsilon^{-r}\\
&\leq n^{-r/\alpha}\varepsilon^{-r}\sum_{k=\frac{1}{2\alpha}\log(n)+1}^{\frac{1}{\alpha}\log(n)}\sum_{j=1}^n\E\left(\left(\widehat{V}_k(j)\right)^r\right)\\
&\lesssim \varepsilon^{-r}n^{1-r/\alpha}\sum_{k=\frac{1}{2\alpha}\log(n)+1}^{\frac{1}{\alpha}\log(n)}\frac{2^{k(r-\alpha)}}{k}\lesssim \varepsilon^{-r}\frac{1}{\log(n)}. 
\end{align*}
We conclude that $\mathrm{II}_n\xrightarrow[n\to\infty]{}0$ in probability. 
Finally we conclude the proof as we have 
\[
\left\| \W_n^{(\mathbf{M})}(X)-\W_n^{(\mathbf{M})}(Z)\right\|_\infty\leq \mathrm{I}_n+\mathrm{II}_n+\mathrm{III}_n
\]
and each of the terms on the right hand side converge to $0$ in probability. 
\end{proof}

\begin{proof}[Proof of Lemma \ref{lem: a<1 middle Z}.(b)]
For all $0\leq t\leq 1$ 
\[
\W_n^{(\mathbf{M})}(X)(t)=n^{-1/\alpha}S_{[nt]}\big(V_n\big),
\]
where for $j\in\N$,
\[
V_n(j):=\sum_{k=\frac{1}{2\alpha}\log(n)+1}^{\frac{1}{\alpha}\log(n)}X_k(j)
\]
We claim that $V_n(1),V_n(2),\ldots,V_n(n)$ are i.i.d. $S_\alpha(A_n,1,0)$ random variables with $\lim_{n\to\infty}(A_n)^\alpha=\ln(2)$. 

Indeed, since $\alpha<1$, we deduce that for all $k>\frac{1}{2\alpha}\log(n)$, we have $4^{k}>n$. The independence of $V_n(1),V_n(2),\ldots,V_n(n)$ readily follows from the independence of $\left\{X_k(m):k\in\N,\ m\leq 4^{k}\right\}$.

 Now for all $1\leq j\leq n$ and $k\in\left(\frac{1}{2\alpha}\log(n),\frac{1}{\alpha}\log(n)\right]$, $X_k(j)$ is an $S_\alpha(\sigma_k,1,0)$ random variable with $(\sigma_k)^\alpha=\frac{1}{k}$. 
 As $V_n(j)$ is a sum of independent $S_\alpha(\sigma_k,1,0)$ random variables (and $\alpha\neq 1$), it follows from \cite[Properties 1.2.1 and 1.2.3]{MR1280932} that $V_n(j)$ is $S_\alpha(A_n,1,0)$ distributed with 
 \[
(A_n)^\alpha=\sum_{k=\frac{1}{2\alpha}\log(n)}^{\frac{1}{\alpha}\log(n)}\frac{1}{k}\sim \ln(2),\ \ \text{as}\ n\to\infty. 
 \]
We will now conclude the proof. Write $a_n:=\frac{(\ln(2))^{1/\alpha}}{A_n}$ and define $W_n(t):=a_n\W_n^{(\mathbf{M})}(X)(t)$ so that $W_n$ is the partial sum process driven by the random variables, $a_nV_n(1),\ldots, a_nV_n(n)$. 

As the latter are i.i.d. $S_\alpha(\ln(2),\beta,0)$ random variables, this shows that $W_n$ is equally distributed as $
\W_n(V)$ where $\left(V(j)\right)_{j=1}^\infty$ are i.i.d. $S_\alpha(\ln(2),1,0)$ random variables. 

By \cite[Corollary 7.1.]{MR2271424} $\W_n(V)$ (and hence $W_n$) converges in distribution to an $S_\alpha(\ln(2),1,0)$ L\'evy motion. 
	
Since $\W_n^{(\mathbf{M})}(X)=(a_n)^{-1}W_n$ with $a_n\to1$, we conclude that $\W_n^{(\mathbf{M})}(X)$ converges in distribution to an $S_\alpha\left(\sqrt[\alpha]{\ln(2)},1,0\right)$ L\'evy motion.
\end{proof}
\subsection{Concluding the proof of Theorem \ref{thm: main a<1}}\label{sub: Theorem general from skewed}
We now fix $\alpha\in (0,1)$ and $\beta\in[-1,1]$ and set $h_k$, $h$ be the functions from Theorem \ref{thm: main a<1} corresponding to $\beta$. We claim that 
$
\W_n(h)
$ converges in distribution to an $S_\alpha(\ln(2),\beta,0)$ L\'evy motion. 

We deduce this claim from the results on the skewed $\beta=1$ case via the following lemma. 
\begin{lemma}\label{lem: beta general 1}
$ $
\begin{itemize}
	\item[(a)] $\left(\W_n^{(\mathbf{S})}(f),\W_n^{(\mathbf{S})}(g)\right)$ converges in distribution (in the uniform topology) to $(0,0)$. 
	\item[(b)] $\left(\W_n^{(\mathbf{M})}(f),\W_n^{(\mathbf{M})}(g)\right)$ converges in distribution to $(\W,\W')$ where $\W,\W'$ are independent $S_\alpha\left(\sqrt[\alpha]{\ln(2)},1,0\right)$ L\'evy motions. 
	\item[(c)]$\left(\W_n^{(\mathbf{L})}(f),\W_n^{(\mathbf{L})}(g)\right)$ converges in distribution (in the uniform topology) to $(0,0)$. 
\end{itemize}
\end{lemma}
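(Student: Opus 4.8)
The plan is to prove the three items essentially by re-running the machinery of the $\beta=1$ case, but now tracking the pair $(f,g)$ jointly, which is legitimate because $f_k$ and $g_k$ come from Corollary \ref{cor: weak Sinai} as \emph{independent} copies of the same triangular array. For item (a), the argument in Lemma \ref{lem: a<1 small} bounds $\|\W_n^{(\mathbf S)}(f)\|_\infty$ in $L^1$ by $\lesssim 1/\log(n)$; the identical estimate applies verbatim to $g$ (since $g_k$ is also $Z_k(1)$-distributed), so both coordinates converge to $0$ in $L^1$, hence in probability, hence the pair converges in probability — and thus in distribution — to $(0,0)$ in the uniform topology. Item (c) is handled the same way using the inclusion and Borel--Cantelli-type bound from Lemma \ref{lem: a<1 Large}, applied to each of $f$ and $g$ separately and combined with a union bound; again $L^1$ (or ``probability of being nonzero'') smallness of each coordinate gives joint convergence to $(0,0)$.

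The substance is item (b). First I would introduce, exactly as in the $\beta=1$ case, the pair of $D[0,1]$-valued processes $\big(\W_n^{(\mathbf M)}(Z),\W_n^{(\mathbf M)}(Z')\big)$ built from two independent families $\{Z_k(j)\}$ and $\{Z'_k(j)\}$, and observe via the continuous map $G_n$ (as in Lemma \ref{lem: a<1 equal dist}, now applied to the product $G_n\times G_n$ on a product of coordinate spaces) together with the independence in Corollary \ref{cor: weak Sinai} that $\big(\W_n^{(\mathbf M)}(f),\W_n^{(\mathbf M)}(g)\big)$ is equidistributed with $\big(\W_n^{(\mathbf M)}(Z),\W_n^{(\mathbf M)}(Z')\big)$. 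Next, the replacement estimate Lemma \ref{lem: a<1 middle Z}(a) applies to each coordinate, so it suffices to show $\big(\W_n^{(\mathbf M)}(X),\W_n^{(\mathbf M)}(X')\big)\Rightarrow^d(\W,\W')$ with $\W,\W'$ independent $S_\alpha(\sqrt[\alpha]{\ln 2},1,0)$ L\'evy motions, for independent arrays $\{X_k(j)\},\{X'_k(j)\}$. As in the proof of Lemma \ref{lem: a<1 middle Z}(b), $\W_n^{(\mathbf M)}(X)(t)=n^{-1/\alpha}S_{[nt]}(V_n)$ and $\W_n^{(\mathbf M)}(X')(t)=n^{-1/\alpha}S_{[nt]}(V'_n)$ where $V_n(1),\dots,V_n(n)$ and $V'_n(1),\dots,V'_n(n)$ are each i.i.d.\ $S_\alpha(A_n,1,0)$ with $A_n^\alpha\to\ln 2$, and — crucially — the two blocks are \emph{mutually} independent because the underlying $X$'s are. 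Hence the $\R^2$-valued array $\big((V_n(j),V'_n(j))\big)_{j=1}^n$ is i.i.d., and after the scalar normalization $a_n=(\ln 2)^{1/\alpha}/A_n\to1$ one invokes the two-dimensional functional stable limit theorem (\cite[Corollary 7.1]{MR2271424} applied in $\R^2$, or coordinatewise plus independence) to get joint convergence to a pair of independent $S_\alpha(\ln 2,1,0)$ L\'evy motions; un-rescaling by $a_n\to1$ finishes it.

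The one point needing slight care — and the only place I expect friction — is justifying the \emph{joint} convergence in (b) rather than merely marginal convergence of each coordinate: independence of the prelimit arrays plus marginal convergence does give joint convergence to the product law, but one should state this cleanly, either by noting that on a Polish product space weak convergence of independent coordinates to independent limits follows from convergence of each marginal (a standard fact), or by appealing directly to the multivariate version of the invariance principle in \cite{MR2271424} for the i.i.d.\ $\R^2$-valued sequence $(V_n(j),V'_n(j))_j$. Everything else is a routine repackaging of the estimates already established for the totally-skewed case, so no new analytic ingredient is required. Finally, combining (a), (b), (c) with the decomposition \eqref{eq: decomp} and the convergence-together Lemma \ref{lem: conv together D} (applied in the product space $D[0,1]^2$), and then applying the continuous linear map $(u,v)\mapsto \big(\tfrac{1+\beta}{2}\big)^{1/\alpha}u-\big(\tfrac{1-\beta}{2}\big)^{1/\alpha}v$ via the continuous mapping theorem, yields that $\W_n(h)$ converges in distribution to $\big(\tfrac{1+\beta}{2}\big)^{1/\alpha}\W-\big(\tfrac{1-\beta}{2}\big)^{1/\alpha}\W'$, which by independence of $\W,\W'$ and the scaling/summation properties of stable laws (\cite[Properties 1.2.1, 1.2.3]{MR1280932}) is an $S_\alpha(\ln 2,\beta,0)$ L\'evy motion, completing the proof of Theorem \ref{thm: main a<1}.
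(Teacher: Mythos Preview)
Your proposal is correct and follows essentially the same approach as the paper: parts (a) and (c) are handled exactly as you describe by rerunning Lemmas~\ref{lem: a<1 small} and~\ref{lem: a<1 Large} for $g$ and combining coordinatewise convergence to $0$ in probability, while for part (b) the paper simply notes that $\W_n^{(\mathbf{M})}(f)$ and $\W_n^{(\mathbf{M})}(g)$ are independent and equally distributed (by Corollary~\ref{cor: weak Sinai}) and invokes Proposition~\ref{prop: a<1 middle h} together with the standard fact that marginal convergence plus independence yields joint convergence to the product law---the very shortcut you mention. Your more elaborate route through the paired $(Z,Z')$ and $(X,X')$ arrays is not needed, though it is of course also valid.
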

\begin{proof}
For all $k\in\mathbb{N}$, $f_k$ and $g_k$ are equally distributed. Following the proofs of Lemmas \ref{lem: a<1 Large} and \ref{lem: a<1 small} we see that
$
\left\|W_n^{(\mathbf{S})}(g)\right\|_\infty$ and  $\left\|W_n^{(\mathbf{L})}(g)\right\|_\infty$ tend to $0$ in probability as $n\to\infty$. Parts (a) and (c) follow from this and Lemmas \ref{lem: a<1 Large} and \ref{lem: a<1 small}.

Now for all $n\in\N$, $\W_n^{(\mathbf{M})}(f)$ and $\W_n^{(\mathbf{M})}(g)$ are independent and equally distributed. Part (b) follows from this and Proposition \ref{prop: a<1 middle h}. 
\end{proof}
An immediate corollary is the following. 
\begin{corollary}\label{cor: beta general}
	$ $
	\begin{itemize}
		\item[(a)] $\left\|\W_n^{(\mathbf{S})}(h)\right\|_\infty\to 0$ converges in measure. 
		\item[(b)] $\W_n^{(\mathbf{M})}(h)$ converges in distribution to an $S_\alpha\left(\sqrt[\alpha]{\ln(2)},\beta,0\right)$ L\'evy motion. 
		\vspace{0.45mm}
		\item[(c)]$\left\|\W_n^{(\mathbf{L})}(h)\right\|_\infty \to0$ in measure. 
	\end{itemize}
\end{corollary}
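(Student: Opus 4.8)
The plan is to read off all three parts from Lemma \ref{lem: beta general 1}, using only that the rescaled partial-sum process $\W_n^{(\bullet)}(\cdot)$ is linear in its argument. Write $c_1:=\left(\frac{1+\beta}{2}\right)^{1/\alpha}$ and $c_2:=\left(\frac{1-\beta}{2}\right)^{1/\alpha}$, so that $h_k=c_1f_k-c_2g_k$ and hence $\W_n^{(\bullet)}(h)=c_1\W_n^{(\bullet)}(f)-c_2\W_n^{(\bullet)}(g)$ for $\bullet\in\{\mathbf S,\mathbf M,\mathbf L\}$. For part (a) this gives the pointwise-in-$n$ bound $\left\|\W_n^{(\mathbf S)}(h)\right\|_\infty\le c_1\left\|\W_n^{(\mathbf S)}(f)\right\|_\infty+c_2\left\|\W_n^{(\mathbf S)}(g)\right\|_\infty$, and each term on the right tends to $0$ in probability by Lemma \ref{lem: beta general 1}(a); part (c) is identical with Lemma \ref{lem: beta general 1}(c). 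These two parts are immediate.

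The substance is part (b). By Lemma \ref{lem: beta general 1}(b), $\big(\W_n^{(\mathbf M)}(f),\W_n^{(\mathbf M)}(g)\big)\Rightarrow^d(\W,\W')$ with $\W,\W'$ independent $S_\alpha\!\left(\sqrt[\alpha]{\ln 2},1,0\right)$ L\'evy motions, and I want to push this through the subtraction map $\Phi(x,y):=c_1x-c_2y$, $D[0,1]^2\to D[0,1]$. The delicate point is that $\Phi$ is \emph{not} continuous everywhere for the $J_1$ topology: addition in $D[0,1]$ fails to be continuous precisely at pairs of functions having a discontinuity in common. However, $\Phi$ \emph{is} continuous at every pair $(x,y)$ whose discontinuity sets are disjoint, a standard property of the Skorohod $J_1$ topology; and since $\W$ and $\W'$ are \emph{independent} $\alpha$-stable L\'evy motions, their jump times are governed by two independent Poisson random measures with non-atomic time intensity, so almost surely $\W$ and $\W'$ have no common jump. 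Thus $(\W,\W')$ lies almost surely in the continuity set of $\Phi$, and the continuous mapping theorem (in the version permitting discontinuities on a set of limit-measure zero) yields $\W_n^{(\mathbf M)}(h)=\Phi\big(\W_n^{(\mathbf M)}(f),\W_n^{(\mathbf M)}(g)\big)\Rightarrow^d c_1\W-c_2\W'$ in the $J_1$ topology.

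It then remains to identify $c_1\W-c_2\W'$. Being a linear combination of two independent L\'evy processes it is again a L\'evy process, so it suffices to compute the law of its increment over an interval $[s,t]$, which is $c_1(\W_t-\W_s)-c_2(\W'_t-\W'_s)$, a sum of two independent terms: the first is $S_\alpha\!\left(c_1\sqrt[\alpha]{\ln 2}\,(t-s)^{1/\alpha},1,0\right)$ and the second, the sign flip turning $\beta=1$ into $\beta=-1$, is $S_\alpha\!\left(c_2\sqrt[\alpha]{\ln 2}\,(t-s)^{1/\alpha},-1,0\right)$. The addition rule for independent stable laws (Property 1.2.1 of \cite{MR1280932}) gives a sum that is $S_\alpha(\tau,\beta',0)$ with $\tau^\alpha=(c_1^\alpha+c_2^\alpha)(\ln 2)(t-s)=(\ln 2)(t-s)$, using $c_1^\alpha+c_2^\alpha=1$, and $\beta'=\tfrac{1+\beta}{2}\cdot 1+\tfrac{1-\beta}{2}\cdot(-1)=\beta$. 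Hence the increment over $[s,t]$ is $S_\alpha\!\left(\sqrt[\alpha]{\ln 2}\,(t-s)^{1/\alpha},\beta,0\right)$, which characterizes the $S_\alpha\!\left(\sqrt[\alpha]{\ln 2},\beta,0\right)$ L\'evy motion; this proves (b).

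The main obstacle is the one flagged above: subtraction is not globally $J_1$-continuous, so one genuinely needs the observation that the independence of $\W$ and $\W'$ forces their jump sets to be a.s. disjoint in order to invoke the continuous mapping theorem. Everything else is bookkeeping with the scaling and convolution properties of stable distributions. An alternative that avoids the $J_1$-continuity issue would be to go back to the driving arrays and realize $\W_n^{(\mathbf M)}(h)$ as the partial-sum process of i.i.d.\ variables of the form $\sum_k\big(c_1Z_k(j)-c_2Z'_k(j)\big)$ (with $Z,Z'$ independent copies), redo the truncation estimates of Lemma \ref{lem: a<1 middle Z} for this combination, and apply \cite[Corollary 7.1.]{MR2271424} directly; this is more laborious, and the route through Lemma \ref{lem: beta general 1} and the continuous mapping theorem is preferable.
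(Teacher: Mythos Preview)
Your proof is correct and follows essentially the same route as the paper: define the linear map $\Phi(x,y)=c_1x-c_2y$, bound parts (a) and (c) by the triangle inequality, and for (b) push the joint convergence of Lemma \ref{lem: beta general 1}(b) through $\Phi$ via the continuous mapping theorem, then identify the limit using the convolution rules for stable laws. The one substantive difference is that the paper simply asserts ``$\varphi$ is continuous'' and invokes the continuous mapping theorem, whereas you correctly flag that addition on $D[0,1]$ is not globally $J_1$-continuous and supply the missing justification (continuity off the set of pairs sharing a jump, together with the a.s.\ disjointness of jump times for independent L\'evy processes); in this respect your argument is more careful than the paper's.
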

\begin{proof}
Set 
\[
\varphi(x,y)=\left(\frac{1+\beta}{2}\right)^{1/\alpha}x-\left(\frac{1-\beta}{2}\right)^{1/\alpha}y
\]
and write $c_\beta:=\left(\left(\frac{\beta+1}{2}\right)^{1/\alpha}-\left(\frac{1-\beta}{2}\right)^{1/\alpha}\right)$.

 For each $D\in\{\mathbf{S},\mathbf{M},\mathbf{L}\}$, and all $n\in\N$, 
\[
\varphi\left(\W_n^{(\mathbf{D})}(f),\W_n^{(\mathbf{D})}(g)\right)=\W_n^{(\mathbf{D})}(h)
\]

Parts (a) and (c) follow from Lemma \ref{lem: beta general 1}.(a) and (c) since for all $x,y\in\R$,  $|\varphi(x,y)|\leq |x|+|y|$. 

Let $\W,\W'$ be two independent $S_\alpha\left(\sqrt[\alpha]{\ln(2)},1,0\right)$ L\'evy motions. It follows that  $\widetilde{\W}:=\varphi(\W,\W')$ is a process with independent increments. By \cite[Property 1.2.13]{MR1280932}, for all $s<t$, $\widetilde{\W}(t)-\widetilde{\W}(s)$ is $S_\alpha(\sqrt[\alpha]{\ln(2)(t-s)},1,0)$ distributed, whence $\widetilde{\W}$ is a $S_\alpha\left(\sqrt[\alpha]{\ln(2)},\beta,0\right)$ L\'evy motion. 

Since $\varphi$ is continuous, Lemma \ref{lem: beta general 1} and the continuous mapping theorem imply that 
\[
\varphi\left(\W_n^{(\mathbf{M})}(f),\W_n^{(\mathbf{M})}(g)\right)=\W_n^{(\mathbf{M})}(h)
\]
 converges in distribution to $\widetilde{\W}$ and the proof is concluded. 
\end{proof}
\begin{proof}[Proof of Theorem \ref{thm: main a<1}]
	By Corollary \ref{cor: beta general}.(a) and (c), $\W_n^{(\mathbf{S})}(h)+\W_n^{(\mathbf{L})}(h)$ converge in distribution to the $0$ function. The theorem then follows from \eqref{eq: decomp}, Corollary \ref{cor: beta general}.(b) and Lemma \ref{lem: conv together D}. 
\end{proof}

\section{Proof of Theorem \ref{thm: a>1}}
Let $\alpha\geq 1$.  The strategy of the proof goes along similar lines. However there is a major difference in the treatment of $\W_n^{(\mathbf{S})}$ as the $L^1$ norm does not decay to $0$. For this reason  we retort to a more sophisticated $L^2$ estimate and make use of the fact that for all $k$, $h_k$ is a $T^{4^{k^2}}$ co-boundary. 

In what follows $1\leq \alpha<2$ is fixed, $h_k$ and $h$ are as in the statement of Theorem \ref{thm: a>1} and the decomposition of $\W_n(h)$ to a sum of $\W_n^{(\mathbf{S})}(h)$, $\W_n^{(\mathbf{S})}(h)$ and $\W_n^{(\mathbf{S})}(h)$ is as before. We write $d_k:=4^{k^2}$. 

\begin{lemma}\label{lem: a>1 Large}
	$\lim_{n\to\infty}m\left(\left\|\W_n^{(\mathbf{L})}(h)\right\|_\infty \neq 0\right)=0$. 
\end{lemma}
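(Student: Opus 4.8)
The statement to prove is Lemma~\ref{lem: a>1 Large}, namely that $\left\|\W_n^{(\mathbf{L})}(h)\right\|_\infty\to 0$ in measure, where $h=\sum_k h_k$ with $h_k=f_k-g_k$ and $\W_n^{(\mathbf{L})}(h)=\sum_{k>\frac{1}{\alpha}\log n}\W_n(h_k)$. The plan is to mimic the proof of Lemma~\ref{lem: a<1 Large}: the whole point of the ``large $k$'' block is that the summands are so rare that, with probability tending to $1$, \emph{every} relevant term $h_k\circ T^j$ (for $k>\frac1\alpha\log n$ and $0\le j\le n-1$) simply vanishes, so the process $\W_n^{(\mathbf{L})}(h)$ is identically the zero function. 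Concretely, I would first record the inclusion
\[
\left[\left\|\W_n^{(\mathbf{L})}(h)\right\|_\infty\neq 0\right]\subset\bigcup_{k=\frac{1}{\alpha}\log(n)+1}^\infty\bigcup_{j=0}^{n-1}\left(\left[f_k\circ T^j\neq 0\right]\cup\left[g_k\circ T^j\neq 0\right]\right),
\]
which holds because if all these functions are zero on a sample point then $S_{[nt]}(h_k)=0$ for all $k$ in the range and all $t$.

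Then I would apply the union bound together with the $T$-invariance of $m$, exactly as in Lemma~\ref{lem: a<1 Large}, to get
\[
m\left(\left\|\W_n^{(\mathbf{L})}(h)\right\|_\infty\neq 0\right)\le \sum_{k=\frac{1}{\alpha}\log(n)+1}^\infty n\left(m(f_k\neq 0)+m(g_k\neq 0)\right)\le C\,n\sum_{k=\frac{1}{\alpha}\log(n)+1}^\infty\frac{2^{-\alpha k}}{k},
\]
where the last inequality is the tail bound $m(f_k\neq 0)=\P(Z_k(1)\neq 0)\le C 2^{-\alpha k}/k$ established in the proof of Lemma~\ref{lem: BCT 1} (and likewise for $g_k$, since $g_k\eqd f_k$). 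Finally I would observe that the geometric-type tail sum is dominated by its first term up to a constant, so
\[
n\sum_{k=\frac{1}{\alpha}\log(n)+1}^\infty\frac{2^{-\alpha k}}{k}\lesssim \frac{n\cdot 2^{-\alpha\cdot\frac{1}{\alpha}\log(n)}}{\log(n)}=\frac{1}{\log(n)}\xrightarrow[n\to\infty]{}0,
\]
using $2^{\log n}=n$. This gives the claim.

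I do not expect any serious obstacle here: the argument is essentially identical to the $\alpha<1$ case (Lemma~\ref{lem: a<1 Large}), and the only points needing a word of care are (i) that $h_k$ now involves both $f_k$ and $g_k$, so one must union-bound over both families — harmless since they are equidistributed and the tail bound applies to each; and (ii) that the cutoff is $k>\frac1\alpha\log n$ so that $2^{-\alpha k}<n^{-1}$ on the whole range, which is exactly what makes the factor $n$ in front summable against the tail. (Note that the co-boundary structure $h_k=f_k-g_k$ with $g_k=f_k\circ T^{d_k}$, which is crucial for the delicate $\W_n^{(\mathbf{S})}$ estimate mentioned in the section preamble, plays \emph{no} role in this lemma; for the ``large'' block the crude support bound suffices.) The real work of Theorem~\ref{thm: a>1} lies elsewhere, in the $L^2$ control of $\W_n^{(\mathbf{S})}(h)$.
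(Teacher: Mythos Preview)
Your proof is correct and is essentially identical to the paper's: the paper writes the same inclusion with $g_k\circ T^j$ replaced by $f_k\circ T^{d_k+j}$ (which is the same set since $g_k=f_k\circ T^{d_k}$), then applies the union bound, stationarity, and the tail estimate from Lemma~\ref{lem: BCT 1} to obtain the $\lesssim 1/\log(n)$ bound.
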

\begin{proof}
We have the inclusion 
\[
\left[\left\|\W_n^{(\mathbf{L})}(h)\right\|_\infty \neq 0\right]\subset \bigcup_{k=\frac{1}{\alpha}\log(n)+1}^\infty \bigcup_{j=0}^{n-1}\left[f_k\circ T^j\neq 0\ \ \text{or}\ \ f_k\circ T^{d_k+j}\neq 0\right].
\]
In a similar way to the proof of Lemma \ref{lem: a<1 Large}, we have 
\[
\P\left(\left\|\W_n^{(\mathbf{L})}(h)\right\|_\infty \neq 0\right)\leq \sum_{k=\frac{1}{\alpha}\log(n)+1}^\infty 2n \cdot m\left(f_k\neq 0\right)\lesssim \frac{1}{\log(n)}\xrightarrow[n\to\infty]{}0. 
\]
\end{proof}
As before we also have. 
\begin{lemma}\label{lem: a>1 small sym}
	$\left\|\W_n^{(\mathbf{S})}(f)\right\|_\infty\xrightarrow[n\to\infty]{}0$ in measure. 
\end{lemma}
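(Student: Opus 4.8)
The statement to prove is Lemma \ref{lem: a>1 small sym}: that $\left\|\W_n^{(\mathbf{S})}(f)\right\|_\infty\to 0$ in measure when $1\le\alpha<2$. Here $\W_n^{(\mathbf{S})}(f)=n^{-1/\alpha}\sum_{k=1}^{\frac{1}{2\alpha}\log n}\W_n(f_k)$, and the obstacle flagged in the text is that the $L^1$ estimate used for $\alpha<1$ no longer suffices: $\E(Z_k(1))$ behaves like $2^{k(1-\alpha)}/k$ which, summed over $k\le \frac{1}{2\alpha}\log n$ and multiplied by $n^{1-1/\alpha}$, does \emph{not} go to $0$ (for $\alpha\ge 1$ the exponent $1-\alpha\le 0$ makes the sum bounded, and $n^{1-1/\alpha}\not\to 0$). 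So one must exploit cancellation. The key structural fact, emphasized just before the lemma, is that $h_k=f_k-g_k$ with $g_k=f_k\circ T^{d_k}$, $d_k=4^{k^2}$, so each $h_k$ is a \emph{coboundary} for the transformation $T^{d_k}$: indeed $\sum_{j=0}^{N-1}h_k\circ T^j = \sum_{j=0}^{N-1}f_k\circ T^j - \sum_{j=0}^{N-1}f_k\circ T^{j+d_k}$, which telescopes in blocks of length $d_k$. Hence $\left|S_N(h_k)\right|$ is controlled by the sum of $f_k\circ T^j$ over the $O(d_k)$ indices lying in the first and last incomplete blocks — in particular, for $N\le n$ and $k$ in the $\mathbf{S}$-range (where $d_k=4^{k^2}$ is astronomically larger than $n$), the whole sum $S_N(h_k)$ equals $\sum_{j=0}^{N-1}f_k\circ T^j - \sum_{j=0}^{N-1}f_k\circ T^{j+d_k}$ with the two index ranges $\{0,\dots,N-1\}$ and $\{d_k,\dots,d_k+N-1\}$ disjoint.

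Wait — the lemma as stated is about $\W_n^{(\mathbf{S})}(f)$, not $\W_n^{(\mathbf{S})}(h)$; but since $f_k\ge 0$ and $g_k$ is equidistributed with $f_k$, the bound $\left\|\W_n^{(\mathbf{S})}(f)\right\|_\infty\le n^{-1/\alpha}\sum_k S_n(f_k)$ would put us back in the doomed $L^1$ regime. So the correct reading (and the only one consistent with ``the $L^1$ norm does not decay to $0$'') is that we should prove the statement for $\W_n^{(\mathbf{S})}(h)$, i.e.\ track $S_{[nt]}(h_k)$ directly and use an $L^2$ bound. The plan is therefore: first, reduce to $\W_n^{(\mathbf{S})}(h)$ (and note the analogous treatment of $\W_n^{(\mathbf{S})}(g)$ reduces to it by $T$-invariance). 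Second, write $\left\|\W_n^{(\mathbf{S})}(h)\right\|_\infty\le n^{-1/\alpha}\sum_{k=1}^{\frac{1}{2\alpha}\log n}\max_{0\le N\le n}\left|S_N(h_k)\right|$ and estimate $\E\big(\max_{N\le n}|S_N(h_k)|^2\big)$. Third, for each such $k$ decompose $S_N(h_k)=\sum_{j=0}^{N-1}f_k\circ T^j-\sum_{j=0}^{N-1}f_k\circ T^{j+d_k}$; since $d_k>n$, the random variables $\{f_k\circ T^j:0\le j<N\}\cup\{f_k\circ T^{j+d_k}:0\le j<N\}$ are (jointly over $j$) distributed as $2N$ i.i.d.\ copies of $Z_k(1)$ — this is exactly what Corollary \ref{cor: weak Sinai} / the embedding of the triangular array gives us, since $2N\le 2n\le d_k$. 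Thus $S_N(h_k)$ is a sum of $N$ i.i.d.\ centered terms $Z_k(j)-Z_k(j')$, and Kolmogorov's maximal inequality (or Doob's $L^2$ inequality for the martingale of partial sums) yields $\E\big(\max_{N\le n}|S_N(h_k)|^2\big)\le 4n\,\var(Z_k(1)-Z_k(1)')=8n\,\var(Z_k(1))\le 8n\,\E(Z_k(1)^2)$.

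Fourth, control the second moment $\E(Z_k(1)^2)=\E(Y_k(1)^2\cdot(\text{discretization error})$\ldots$)\le \E\big(X_k(1)^2\mathbf 1_{[2^k\le X_k(1)\le 4^k]}\big)$, and invoke the relevant truncated-moment estimate (Corollary \ref{cor: moments}, as used elsewhere in the excerpt): since $\alpha<2$, $\E\big(X_k(1)^2\mathbf 1_{[X_k(1)\le 4^k]}\big)\lesssim \sigma_k^\alpha (4^k)^{2-\alpha}=k^{-1}4^{k(2-\alpha)}$. Fifth, assemble: by the triangle inequality in $L^2$,
\begin{align*}
\left\|\left\|\W_n^{(\mathbf{S})}(h)\right\|_\infty\right\|_2 &\le n^{-1/\alpha}\sum_{k=1}^{\frac{1}{2\alpha}\log n}\left\|\max_{N\le n}|S_N(h_k)|\right\|_2\\
&\lesssim n^{-1/\alpha}\sum_{k=1}^{\frac{1}{2\alpha}\log n}\sqrt{n}\cdot \frac{2^{k(2-\alpha)}}{\sqrt{k}}\lesssim n^{\frac12-\frac1\alpha}\cdot 2^{(2-\alpha)\cdot\frac{1}{2\alpha}\log n}\\
&= n^{\frac12-\frac1\alpha+\frac{2-\alpha}{2\alpha}}=n^{0}\cdot(\text{correction}),
\end{align*}
so the dominant term from $k\approx \frac{1}{2\alpha}\log n$ is of order $n^{\frac12-\frac1\alpha}\cdot n^{\frac{2-\alpha}{2\alpha}}=n^{\frac{\alpha-2+2-\alpha}{2\alpha}}=n^0$ — which is only bounded, not decaying. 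So the crude bound $2^{k(2-\alpha)}\le 2^{\frac{2-\alpha}{2\alpha}\log n}$ loses too much; instead one keeps the geometric sum: $\sum_{k\le \frac{1}{2\alpha}\log n}k^{-1/2}2^{k(2-\alpha)}$ is dominated by its last term (since $2-\alpha>0$), giving $\lesssim (\log n)^{-1/2}\cdot 2^{\frac{2-\alpha}{2\alpha}\log n}=(\log n)^{-1/2}n^{\frac{2-\alpha}{2\alpha}}$, whence $\left\|\left\|\W_n^{(\mathbf{S})}(h)\right\|_\infty\right\|_2\lesssim n^{\frac12-\frac1\alpha}(\log n)^{-1/2}n^{\frac{2-\alpha}{2\alpha}}=(\log n)^{-1/2}\to 0$. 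Finally, Markov's inequality converts $L^2\to 0$ into convergence in measure. \textbf{Main obstacle.} The delicate point is the moment bookkeeping: the exponents $1/2$, $1/\alpha$, and $(2-\alpha)/(2\alpha)$ conspire to cancel exactly, so the decay to $0$ comes \emph{only} from the $(\log n)^{-1/2}$ harvested from the $k^{-1/2}$ weight in the last term of the geometric sum — one must be careful not to throw it away. The other subtle point is justifying that for $k$ in the $\mathbf S$-range the $2N\le 2n$ relevant copies of $f_k\circ T^j$ are genuinely i.i.d.\ as $Z_k$'s, which is where $d_k=4^{k^2}\gg n$ and the embedding of Corollary \ref{cor: weak Sinai} (with its index range up to $4^{k^2}$) are used; everything else is a routine application of Doob's/Kolmogorov's inequality and the truncated-moment estimate.
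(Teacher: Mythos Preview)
Your reading that the lemma is really about $\W_n^{(\mathbf{S})}(h)$ and your overall $L^2$/maximal-inequality strategy are correct, and for the \emph{upper} part of the $\mathbf{S}$-range your argument is essentially the paper's. But there is a genuine gap in the step where you assert that for $k$ in the $\mathbf{S}$-range ``$d_k=4^{k^2}$ is astronomically larger than $n$'' and hence ``$2N\le 2n\le d_k$''. This is simply false for small $k$: one has $d_k>n$ if and only if (roughly) $k>\sqrt{\log n}$; at the bottom of the range (e.g.\ $k=1$, $d_1=4$) the inequality fails badly. The embedding of Corollary~\ref{cor: weak Sinai} furnishes the i.i.d.\ structure of $(f_k\circ T^j)_{0\le j<d_k}$ and, independently, of $(g_k\circ T^j)_{0\le j<d_k}$; for $j\ge 2d_k$ nothing is known about the \emph{joint} law, only the marginals via stationarity. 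So when $N$ ranges up to $n\gg d_k$, your appeal to Kolmogorov/Doob for $\max_{N\le n}|S_N(h_k)|$ is unjustified: the partial sums are not a martingale with respect to any filtration you control.

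The paper handles exactly this by splitting the $\mathbf{S}$-range at $k=\sqrt{\log n}$ into a piece $\mathbf{LS}$ (where $d_k>n$, and your argument goes through essentially verbatim; cf.\ Lemma~\ref{lem: a>1 LS}) and a piece $\mathbf{VS}$ (where $d_k\le n$). For $\mathbf{VS}$ the coboundary telescoping is combined with stationarity: writing $F_k=f_k-\int f_k\,dm$, for any shift $M$ and any length $L\le d_k$ one has $\|S_L(F_k)\circ T^M\|_2^2=\|S_L(F_k)\|_2^2=L\,\|F_k\|_2^2$, the second equality now legitimately using independence since $L\le d_k$. A case analysis then gives increment bounds $\|S_j(h_k)-S_l(h_k)\|_2^2\lesssim (j-l)\,k^{-1}4^{(2-\alpha)k}$ uniformly in $1\le l<j\le n$ (Lemma~\ref{lem: a>1 such a long one}), and the maximum is controlled not by Kolmogorov/Doob but by Billingsley's general maximal inequality \cite[Theorem~10.2]{MR1700749}, which requires only second-moment bounds on increments. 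Since $\sum_{k\le\sqrt{\log n}}k^{-1}4^{(2-\alpha)k}\ll n^\kappa$ for every $\kappa>0$, this suffices (Corollary~\ref{cor: moment VS} and Lemma~\ref{lem: a>1 VS}). Your argument cannot bypass this extra step.
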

The proof of this lemma when $1\leq \alpha<2$ is more difficult than the analogous Lemma \ref{lem: a<1 small}. It is given in Subsection \ref{sub: a>1 symmeteric small}. 

\begin{proposition}\label{prop: a>1 middle h}
	$\W_n^{(\mathbf{M})}(h)$ converges in distribution to $\W$, a $\aS\left(\sqrt[\alpha]{\ln(2)}\right)$ L\'evy motion. 
\end{proposition}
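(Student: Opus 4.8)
The plan is to mimic the structure used in the $\alpha<1$ case (Proposition \ref{prop: a<1 middle h} and Lemma \ref{lem: a<1 middle Z}), but now working with the symmetrized array $Z_k(j)-Z_k'(j)$ where the primed copy comes from the shift by $d_k=4^{k^2}$. First I would observe that since $h_k=f_k-g_k=f_k-f_k\circ T^{d_k}$, and $4^k \le d_k$ whenever we are in the middle range $k>\frac{1}{2\alpha}\log(n)$ (so that $n<4^k\le d_k$), the blocks $(f_k\circ T^{j-1})_{1\le j\le n}$ and $(f_k\circ T^{d_k+j-1})_{1\le j\le n}$ are, for the purposes of the partial sums up to time $n$, two independent i.i.d.\ blocks each distributed as $(Z_k(j))_{1\le j\le n}$. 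Hence $\W_n^{(\mathbf{M})}(h)$ is equally distributed as the process built from the symmetrized variables $\widehat Z_k(j):=Z_k(j)-Z_k'(j)$, exactly as in Lemma \ref{lem: a<1 equal dist} via a continuous coordinate map and the continuous mapping theorem. This reduces the problem to a genuine probabilistic statement about triangular arrays of independent symmetric random variables on $(\Omega,\F,\P)$.

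Next I would carry out the replacement of $\widehat Z_k$ by $\widehat Y_k:=Y_k-Y_k'$ and then by $\widehat X_k:=X_k-X_k'$, in the uniform topology and in probability, as in Lemma \ref{lem: a<1 middle Z}(a). The $Y\to Z$ replacement is controlled by Fact \ref{fact:1}: the per-term error is at most $4^{-k}$, and since there are $n$ terms and $n<4^k$, summing over the $O(\log n)$ middle scales and dividing by $n^{1/\alpha}$ gives a bound $\lesssim \log(n)/n^{1/\alpha}\to 0$ (using $\alpha<2$, in fact $\alpha\le 2$ suffices here). The $X\to Y$ replacement splits into the large-value part $X_k\1_{[X_k>4^k]}$, handled by a union bound on $\{X_k(j)>4^k\}$ with tail probability $\lesssim 4^{-\alpha k}/k$ exactly as in Lemma \ref{lem: a<1 Large}, and the small-value part $X_k\1_{[X_k\le 2^k]}$, handled by a truncated $r$-th moment Markov bound for some $\alpha<r<\min(2,\cdot)$; but since the limit is symmetric one may instead use a second-moment (variance) estimate on the symmetrized truncated sum, which is cleaner: $\E((\widehat X_k\1_{[|X_k|\le 2^k]})^2)\lesssim 2^{k(2-\alpha)}/k$, so the total variance of the normalized middle sum is $\lesssim n^{1-2/\alpha}\sum 2^{k(2-\alpha)}/k$. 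Here one has to be slightly careful: when $\alpha=1$ or $\alpha$ close to $1$, $2/\alpha\ge 2$ and $n^{1-2/\alpha}$ is small, but the factor $\sum_{k\le \frac1\alpha\log n}2^{k(2-\alpha)}\sim n^{(2-\alpha)/\alpha}$ partially cancels it; the product is $\lesssim n^{1-2/\alpha+(2-\alpha)/\alpha}/\log n = n^{1-1} /\log n\cdot(\text{const}) = 1/\log n\to 0$, so this works across the whole range $\alpha\in[1,2)$. (For $\alpha=1$ one should double-check the constants in the truncated-moment lemma since the stable variable has infinite mean; the truncation at $2^k$ with $\sigma_k=k^{-1}$ keeps everything finite and the asymptotic $\E(Z\1_{[\sigma_k Z\le 2^k]})$ may grow like $\log k$, contributing only a harmless polylog factor.)

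For the main term, I would show that $\W_n^{(\mathbf{M})}(\widehat X)$ converges in distribution to the $S\alpha S(\sqrt[\alpha]{2\ln 2})$ L\'evy motion, mirroring Lemma \ref{lem: a<1 middle Z}(b): for fixed $n$ the variables $\widehat V_n(j):=\sum_{k}\widehat X_k(j)$ over the middle range are i.i.d., and being a finite sum of independent $S_\alpha(\sigma_k,1,0)$ and $-S_\alpha(\sigma_k,1,0)$ variables (with $\alpha\ne 1$, and for $\alpha=1$ the symmetric case still closes under convolution), each $\widehat V_n(j)$ is $S_\alpha(A_n,0,0)$ with $A_n^\alpha=2\sum_{k=\frac{1}{2\alpha}\log n}^{\frac1\alpha\log n}\frac1k\to 2\ln 2$ by \cite[Properties 1.2.1, 1.2.3]{MR1280932}; rescaling by $a_n=(2\ln 2)^{1/\alpha}/A_n\to 1$ and invoking \cite[Corollary 7.1.]{MR2271424} gives the functional convergence. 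Finally, Lemma \ref{lem: conv together D} (the convergence-together lemma in $J_1$) combines parts (a) and (b) to conclude. The main obstacle I anticipate is the $\alpha=1$ endpoint: the skewed stable building blocks have infinite mean, so all the truncated-moment estimates (Corollaries ``moments below $a$'' etc.\ referenced in the excerpt) must be invoked with care, and one must confirm that the logarithmic corrections they produce are absorbed by the $1/\log n$ slack in the estimates above; everything else is a routine adaptation of the $\alpha<1$ argument with symmetrization replacing one-sidedness.
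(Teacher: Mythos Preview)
Your proposal is essentially the paper's own proof: Lemma~\ref{lem: a>1 equal dist} gives the equality in distribution with the symmetrized $Z$-process, and Lemma~\ref{lem: a>1 middle Z} carries out exactly the $Z\to Y\to X$ replacement you describe, using Kolmogorov's maximal inequality on the zero-mean symmetrized small-value part and then identifying the limit via the i.i.d.\ $S\alpha S(A_n)$ structure with $A_n^\alpha\to 2\ln 2$. One small slip: for $\alpha>1$ you do \emph{not} have $n<4^k$ throughout the middle range (only $n<d_k=4^{k^2}$, which is what actually yields independence), and the $Y\to Z$ error is bounded by $n\sum_k 4^{-k}\lesssim n^{1-1/\alpha}$ via direct geometric summation rather than via $n\cdot 4^{-k}\le 1$.
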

Assuming the previous claims we can complete the proof of Theorem \ref{thm: a>1}.
\begin{proof}[Proof of Theorem \ref{thm: a>1}]
By Lemmas \ref{lem: a>1 Large} and \ref{lem: a>1 small sym}, $\left\|\W_n^{(\mathbf{S})}+\W_n^{(\mathbf{L})}\right\|_\infty$ converges in probability to $0$. The claim now follows from Proposition \ref{prop: a>1 middle h} and Lemma \ref{lem: conv together D}. 
\end{proof}	
In the next two subsections we prove Proposition \ref{prop: a>1 middle h} and Lemma \ref{lem: a>1 small sym} respectively. 
\subsection{Proof of Proposition \ref{prop: a>1 middle h}}

We introduce the following $D[0,1]$ valued processes on $(\Omega,\mathcal{F},\P)$:
\begin{align*}
	\W_n^{(\mathbf{M})}(Z)(t)&:=\frac{1}{n^{1/\alpha}}\sum_{k=\frac{1}{2\alpha}\log(n)+1}^{\frac{1}{\alpha}\log(n)}S_{[nt]}\left(Z_k(\cdot)-Z_k\left(\cdot+d_k\right)\right),\\ \W_n^{(\mathbf{M})}(Y)(t)&:=\frac{1}{n^{1/\alpha}}\sum_{k=\frac{1}{2\alpha}\log(n)+1}^{\frac{1}{\alpha}\log(n)}S_{[nt]}\left(Y_k(\cdot)-Y_k\left(\cdot+d_k\right)\right)\ \ \text{and}\\
	\W_n^{(\mathbf{M})}(X)(t)&:=\frac{1}{n^{1/\alpha}}\sum_{k=\frac{1}{2\alpha}\log(n)+1}^{\frac{1}{\alpha}\log(n)}S_{[nt]}\left(X_k(\cdot)-X_k\left(\cdot+d_k\right)\right).
\end{align*}	
The following is the analogue of Lemma \ref{lem: a<1 equal dist} for the current case. 
\begin{lemma} \label{lem: a>1 equal dist}
	$\W_n^{(\mathbf{M})}(h)$ and $\W_n^{(\mathbf{M})}(Z)$ are equally distributed. 
\end{lemma}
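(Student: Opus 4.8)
The plan is to mimic the proof of Lemma \ref{lem: a<1 equal dist}, the only new feature being that the functions in the middle block now involve the shifted copies $g_k = f_k \circ T^{d_k}$, so one must bundle the function values over a window of length $4^{k^2} + [nt]$ rather than length $[nt]$, and then check that this window still lies inside the range on which the copying lemma (Corollary \ref{cor: weak Sinai}) has produced the correct joint distribution. First I would recall that, by Corollary \ref{cor: weak Sinai}, the family $\{f_k \circ T^{j-1} : k\in\N,\ 1\le j\le 4^{k^2}\}$ is distributed as the triangular array $\{Z_k(j) : k\in\N,\ 1\le j\le 4^{k^2}\}$ (indeed the construction was done on the doubled range $1\le j \le 2\cdot 4^{k^2}$, and $g_k = f_k\circ T^{d_k}$ so that $g_k\circ T^{j-1} = f_k\circ T^{d_k + j - 1}$ is exactly the ``second half'' $Z_k(d_k + j)$). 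In particular, for every $k$ appearing in the middle block, i.e. $k > \tfrac{1}{2\alpha}\log n$, we have $4^{k^2} > 4^{k} > n$ since $k\ge 2$ eventually, hence $h_k \circ T^{j-1} = f_k\circ T^{j-1} - f_k\circ T^{d_k + j - 1}$ is jointly distributed, over $k$ in the middle block and $1\le j \le n$, as $Z_k(j) - Z_k(d_k + j)$.

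Next I would introduce the continuous map encoding the partial-sum-process construction: define
\[
G_n\Big( \big(x_k(j)\big)_{k\in\N,\ 1\le j\le 2\cdot 4^{k^2}} \Big)(t) := \frac{1}{n^{1/\alpha}} \sum_{k=\frac{1}{2\alpha}\log(n)+1}^{\frac{1}{\alpha}\log(n)} \sum_{j=1}^{[nt]} \big( x_k(j) - x_k(d_k + j) \big),
\]
which, for fixed $n$, is a continuous map from $\prod_k \R^{2\cdot 4^{k^2}}$ (with the product topology) into $D[0,1]$ with the $J_1$ topology — continuity holds because only finitely many coordinates $k$ appear, and for each the map $\big(x_k(j)\big)_j \mapsto$ the corresponding step function is continuous into $D[0,1]$ (it is even continuous into the sup-norm topology). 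Then I would observe the two identifications
\[
G_n\Big(\big(f_k\circ T^{j-1}\big)_{k,j}\Big) = \W_n^{(\mathbf{M})}(h), \qquad G_n\Big(\big(Z_k(j)\big)_{k,j}\Big) = \W_n^{(\mathbf{M})}(Z),
\]
the first because $f_k\circ T^{d_k+j-1} = g_k\circ T^{j-1}$ so that the inner sum telescopes into $S_{[nt]}(h_k)$, and the second directly from the definition of $\W_n^{(\mathbf{M})}(Z)$. Since the two input arrays $\big(f_k\circ T^{j-1}\big)_{k,j}$ and $\big(Z_k(j)\big)_{k,j}$ have the same distribution on $\prod_k \R^{2\cdot 4^{k^2}}$ by Corollary \ref{cor: weak Sinai}, the continuous mapping theorem gives that their images under $G_n$ have the same distribution, which is precisely the claim.

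The only point that requires a little care — and the main (mild) obstacle — is bookkeeping the index ranges: one must be sure that for $k$ in the middle block the window $[\,1,\ d_k + n\,]$ over which one reads the values $f_k \circ T^{j-1}$ is contained in $[\,1,\ 2\cdot 4^{k^2}\,]$, which is where Corollary \ref{cor: weak Sinai} guarantees the correct joint law; this holds since $d_k = 4^{k^2}$ and $n < 4^k \le 4^{k^2}$ for the relevant $k$, so $d_k + n < 2\cdot 4^{k^2}$. Everything else is the same argument as in Lemma \ref{lem: a<1 equal dist}, and no new probabilistic input is needed.
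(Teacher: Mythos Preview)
Your proposal is correct and is exactly the approach the paper intends: it explicitly says the proof is ``similar to the proof of Lemma~\ref{lem: a<1 equal dist} with obvious modifications'' and leaves the details to the reader, and your write-up supplies precisely those modifications (enlarging the coordinate window to $2\cdot 4^{k^2}$ and inserting the difference $x_k(j)-x_k(d_k+j)$ into the map $G_n$).

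One small arithmetic slip: the inequality $4^k>n$ that you invoke twice is false when $\alpha>1$, since $k>\tfrac{1}{2\alpha}\log n$ only yields $4^k>n^{1/\alpha}$. What you actually need is $n\le d_k=4^{k^2}$, and this does hold for all sufficiently large $n$ (as the paper notes, for $n\ge 2^{4\alpha}$), because $k>\tfrac{1}{2\alpha}\log n$ gives $k^2>\tfrac{1}{4\alpha^2}(\log n)^2\gg \tfrac{1}{2}\log n$. With this corrected justification your index-range check goes through and the rest of the argument is unchanged.
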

The proof of Lemma \ref{lem: a>1 equal dist} is similar to the proof of Lemma \ref{lem: a<1 equal dist} with obvious modifications. We leave it to the reader. Proposition \ref{prop: a>1 middle h} follows from Lemma \ref{lem: a>1 equal dist} and the following. 

\begin{lemma}\label{lem: a>1 middle Z}
	$ $
	\begin{itemize}
		\item[(a)] $
		\left\| \W_n^{(\mathbf{M})}(X)-\left(\W_n^{(\mathbf{M})}(Z)\right)\right\|_\infty\xrightarrow[n\to\infty]{}0$ in measure. 
		\item[(b)] $\W_n^{(\mathbf{M})}(X)$ converges in distribution to a $\aS\left(\sqrt[\alpha]{2\ln(2)}\right)$ L\'evy motion.
	\end{itemize}
	Consequently  $\W_n^{(\mathbf{M})}(Z)$ converges in distribution to a $\aS\left(\sqrt[\alpha]{2\ln(2)}\right)$ L\'evy motion
\end{lemma}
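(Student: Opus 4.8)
The proof proceeds in two essentially independent pieces, (a) a truncation/discretization estimate, and (b) an honest functional CLT for an i.i.d.\ triangular array with $S\alpha S$ summands.

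For part (a), the plan is to mirror the argument of Lemma \ref{lem: a<1 middle Z}.(a), tracking the difference $X_k(\cdot)-X_k(\cdot+d_k)$ rather than $X_k(\cdot)$. By the triangle inequality in the uniform norm one bounds $\left\|\W_n^{(\mathbf{M})}(X)-\W_n^{(\mathbf{M})}(Z)\right\|_\infty$ by $n^{-1/\alpha}\sum_{k}\left(\left(S_n(X_k)-S_n(Z_k)\right)+\left(S_n(X_k(\cdot+d_k))-S_n(Z_k(\cdot+d_k))\right)\right)$, so it suffices to control one copy (the shifted copy has the same distribution). Then one splits $X_k-Z_k$ through $Y_k$ exactly as before: the grid-rounding part $S_n(Y_k)-S_n(Z_k)$ is deterministically $\le n4^{-k}\le 1$ on the range $k>\tfrac{1}{2\alpha}\log n$ (since $\alpha<2$ forces $4^k>n^{2}\cdot n^{-?}$; more precisely $4^k>\sqrt n$, and we may further shrink the window or note $n4^{-k}$ is summable over the window against $n^{-1/\alpha}$); the large part $X_k\mathbf 1_{[X_k>4^k]}$ is handled by the union bound using $\P(X_k(1)>4^k)\lesssim \tfrac{4^{-\alpha k}}{k}$ from Proposition \ref{prop: ST}; and the small part $X_k\mathbf 1_{[X_k\le 2^k]}$ is handled by an $r$-th moment Markov bound with $\alpha<r<\min(1,\alpha)$... here one must be a little careful since now $\alpha$ may be $\ge 1$, so one instead takes $r<\alpha$ (a fractional moment below the index), uses Corollary \ref{cor: moments} to get $\E((X_k\mathbf 1_{[X_k\le 2^k]})^r)\lesssim \tfrac{2^{k(r-\alpha)}}{k}$, and sums a geometric-type series times $n^{1-r/\alpha}$ which is $\lesssim (\log n)^{-1}$. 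I expect the bookkeeping with $r$ to be the only subtlety in (a), and it is routine.

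For part (b), the plan is to observe that, since $\alpha<2$ and $k>\tfrac{1}{2\alpha}\log n$ implies $d_k=4^{k^2}>4^k>n$ (for $n$ large), the random variables $\{X_k(j):\ 1\le j\le n\}$ and $\{X_k(j+d_k):\ 1\le j\le n\}$ are jointly independent across all relevant $k$ and $j$, so that $W_n(j):=\sum_{k} \left(X_k(j)-X_k(j+d_k)\right)$, $j=1,\dots,n$, is an i.i.d.\ sequence and $\W_n^{(\mathbf{M})}(X)(t)=n^{-1/\alpha}S_{[nt]}(W_n)$. Each summand $X_k(j)-X_k(j+d_k)$ is a difference of two independent $S_\alpha(\sigma_k,1,0)$ variables, hence (by \cite[Property 1.2.3]{MR1280932} or the characteristic-function computation, using $\alpha\ne 1$ so that the skewness tangent terms cancel and the shift is zero) it is $S\alpha S$ with scale $(2\sigma_k^\alpha)^{1/\alpha}$, i.e.\ scale$^\alpha=2/k$. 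Summing independent $S\alpha S$ variables over the window, $W_n(j)$ is $S\alpha S(A_n)$ with $A_n^\alpha=\sum_{k=\frac{1}{2\alpha}\log n+1}^{\frac1\alpha\log n}\tfrac2k\sim 2\ln 2$. Then, exactly as in the proof of Lemma \ref{lem: a<1 middle Z}.(b), rescale by $a_n:=(2\ln2)^{1/\alpha}/A_n\to1$ so that $a_nW_n(j)$ are i.i.d.\ $S\alpha S(2\ln2)$, invoke \cite[Corollary 7.1.]{MR2271424} to get $J_1$-convergence of the partial-sum process to a $S\alpha S\big(\sqrt[\alpha]{2\ln 2}\big)$ L\'evy motion, and undo the $a_n\to1$ rescaling via the continuous mapping theorem.

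Finally, the ``Consequently'' clause follows from (a), (b), and the convergence-together lemma (Lemma \ref{lem: conv together D}), since (a) gives $\W_n^{(\mathbf{M})}(X)-\W_n^{(\mathbf{M})}(Z)\Rightarrow^d 0$ in the uniform topology and (b) gives $J_1$-convergence of $\W_n^{(\mathbf{M})}(X)$. The main obstacle is the $\alpha\ge1$ range of part (a): unlike in Lemma \ref{lem: a<1 middle Z}, one cannot use $L^1$ bounds on the small part, so one must commit to a genuinely fractional moment $r<\alpha\le$ (possibly) $1$ and verify that Corollary \ref{cor: moments} supplies the needed bound; everything else is a direct transcription of the $\alpha<1$ argument with the coboundary shift $d_k$ carried along.
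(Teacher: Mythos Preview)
Your plan for part (b) is correct and matches the paper's argument almost verbatim (the parenthetical ``using $\alpha\ne 1$'' is unnecessary: the difference of two i.i.d.\ $S_\alpha(\sigma,1,0)$ variables is symmetric by construction, hence $S\alpha S$ with scale$^\alpha=2\sigma^\alpha$ for every $\alpha\in(0,2)$).

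Your plan for part (a) has a genuine gap in the treatment of the ``small part'' $\widehat V_k(m)=X_k(m)\mathbf 1_{[X_k(m)\le 2^k]}$. Two issues:
\begin{itemize}
\item Your opening triangle-inequality bound
\[
\left\|\W_n^{(\mathbf{M})}(X)-\W_n^{(\mathbf{M})}(Z)\right\|_\infty
\le n^{-1/\alpha}\sum_k\bigl(S_n(X_k)-S_n(Z_k)\bigr)+\text{(shifted copy)}
\]
implicitly replaces a supremum over $t$ by the endpoint $t=1$. That is only valid when every summand $X_k(m)-Z_k(m)$ is nonnegative, which fails for $\alpha\ge 1$: a totally skewed $S_\alpha(\sigma,1,0)$ variable with $\alpha\ge 1$ takes negative values.
\item More fundamentally, handling each copy of $\widehat V_k$ separately cannot work, because $\widehat V_k$ has \emph{nonzero mean} for $\alpha\ge 1$ (indeed $\E\widehat V_k(1)\sim \sigma_k\E(Z)$ for $\alpha>1$), and the accumulated drift $n^{-1/\alpha}\cdot n\sum_k \E(\widehat V_k(1))$ diverges. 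Your proposed fix of taking a fractional moment $r<\alpha$ and invoking Corollary~\ref{cor: moments} does not save this: that corollary requires $r>\alpha$, and even if one uses the correct bound $\E(|\widehat V_k|^r)\le \E(|X_k|^r)\asymp \sigma_k^r$ for $r<\alpha$, the resulting sum $n^{1-r/\alpha}\sum_k k^{-r/\alpha}$ diverges. The claimed $\lesssim(\log n)^{-1}$ is false.
\end{itemize}

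The missing idea, which the paper uses, is to \emph{keep the two copies together}. Write
\[
\widehat V(m)=\sum_k\bigl(X_k(m)\mathbf 1_{[X_k(m)\le 2^k]}-X_k(m+d_k)\mathbf 1_{[X_k(m+d_k)\le 2^k]}\bigr),
\]
a difference of i.i.d.\ pieces, hence zero mean. Since $d_k>n$ on the window, the $\widehat V(m)$ for $1\le m\le n$ are independent, and Proposition~\ref{prop: second moment for a>=1} gives $\E(\widehat V(1)^2)\lesssim n^{2/\alpha-1}/\log n$. Kolmogorov's maximal inequality then controls $\sup_t|\mathrm{II}_n(t)|$ by $\epsilon^{-2}n^{1-2/\alpha}\E(\widehat V(1)^2)\lesssim 1/\log n$. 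The grid-rounding and large-value parts of your plan are fine; only the small part needs this $L^2$/maximal-inequality argument exploiting the coboundary symmetrization.
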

\begin{proof}[Proof of Lemma \ref{lem: a>1 middle Z}.(b)]
	For all $0\leq t\leq 1$ 
	\[
	\W_n^{(\mathbf{M})}(X)(t)=n^{-1/\alpha}S_{[nt]}\big(V_n\big),
	\]
	where for $j\in\N$,
	\[
	V_n(j):=\sum_{k=\frac{1}{2\alpha}\log(n)+1}^{\frac{1}{\alpha}\log(n)}\left(X_k(j)-X_k\big(j+d_k\big)\right).
	\]
	We claim that for all but finitely many $n$, $V_n(1),V_n(2),\ldots,V_n(n)$ are i.i.d. $\aS(A_n)$ random variables with $\lim_{n\to\infty}(A_n)^\alpha=\ln(2)$. 
	
	For all $n\geq 2^{4\alpha}$, if $k\geq \frac{1}{2\alpha}\log(n)$, we have $d_k\geq n$. For all such $n$, the independence of $V_n(1),\ldots,V_n(n)$ follows from the independence of $\left\{X_k(j):\ k\in\N,1\leq j\leq 2\cdot d_k\right\}$. We will now calculate its distributions.  
	
	For all $1\leq j\leq n$ and $k>\frac{1}{2\alpha}\log(n)$, $X_k(j)-X_k\big(j+d_k\big)$ is a difference of two independent $S_\alpha\left(k^{-1/\alpha},1,0\right)$ random variables. By \cite[Properties 1.2.1 and 1.2.3]{MR1280932}, it is $\aS\left(\left(\frac{2}{k}\right)^{1/\alpha}\right)$ distributed. As $V_n(j)$ is a sum of independent $\aS$ random variables, we see that $V_n(j)$ is $\aS(A_n)$ distributed with
	\[
	(A_n)^\alpha:=\sum_{k=\frac{1}{2\alpha}\log(n)}^{\frac{1}{\alpha}\log(n)}\frac{2}{k}=2\ln(2)(1+o(1)),\ \ \text{as}\ n\to\infty. 
	\]
	This concludes the claim on $V_n(1),\ldots,V_n(n)$. The conclusion of the statement from here is similar to the end of the proof of Lemma \ref{lem: a>1 middle Z}.(b).
\end{proof}
\begin{proof}[Proof of Lemma \ref{lem: a>1 middle Z}.(a)]
We assume $n>2^{4\alpha}$ so that for all $k>\frac{1}{2\alpha}\log(n)$, $d_k>n$. 

Firstly, since for all $k\in\N$ and $j\leq 2d_k$, 
\[
0<Y_k(j)-Z_k(j)\leq 4^{-{k}},
\]
we have
\begin{align*}
	n^{1/\alpha}\left\| \W_n^{(\mathbf{M})}(Y)-\left(\W_n^{(\mathbf{M})}(Z)\right)\right\|_\infty&\leq \sum_{k=\frac{1}{2\alpha}\log(n)+1}^{\frac{1}{\alpha}\log(n)}S_n\left(Y_k(\cdot)-Z_k\left(\cdot\right)\right)\\
	&\ \ \  +\sum_{k=\frac{1}{2\alpha}\log(n)+1}^{\frac{1}{\alpha}\log(n)}S_n\left(Y_k(\cdot+d_k)-Z_k\left(\cdot+d_k\right)\right)\\
	&\leq 2n\sum_{k=\frac{1}{2\alpha}\log(n)+1}^{\frac{1}{\alpha}\log(n)}4^{-k}\lesssim n^{1-\frac{1}{\alpha}}.
\end{align*}
Consequently,
\begin{equation}\label{eq: a>1 W_n(Y)-W_n(Z)}
\left\| \W_n^{(\mathbf{M})}(Y)-\W_n^{(\mathbf{M})}(Z)\right\|_\infty\lesssim n^{1-\frac{2}{\alpha}}\xrightarrow[n\to\infty]{}0.
\end{equation}
We turn to look at $\W_n^{(\mathbf{M})}(X)-\W_n^{(\mathbf{M})}(Y)$. For all $0\leq t \leq 1$, 
\[
\W_n^{(\mathbf{M})}(X)(t)-\W_n^{(\mathbf{M})}(Y)(t)=\mathrm{II}_n(t)+\mathrm{III}_n(t),
\]
where 
\begin{align*}
\widetilde{V}_k(m)&:=X_k(m)1_{\left[X_k(m)>4^k\right]}-X_k\big(m+d_k\big)1_{\left[X_k\big(m+d_k\big)> 4^k\right]}\ \ \ \ \text{and}\\ 
 \widehat{V}(m)&:=\sum_{k=\frac{1}{2\alpha}\log(n)+1}^{\frac{1}{\alpha}\log(n)}\left(X_k(m)1_{\left[X_k(m)\leq 2^k\right]}-X_k\big(m+d_k\big)1_{\left[X_k\big(m+d_k\big)\leq 2^k\right]}\right), 
\end{align*}
and
\begin{align*}
	\mathrm{II}_n(t)&:=n^{-1/\alpha}S_{[nt]}\left(\widehat{V}\right)\\
	\mathrm{III}_n(t)&:=n^{-1/\alpha}\sum_{k=\frac{1}{2\alpha}\log(n)+1}^{\frac{1}{\alpha}\log(n)}S_{[nt]}(\widetilde{V}_k).
\end{align*}
Similarly to the proof of Lemma \ref{lem: a<1 Large}, 
\begin{align*}
	\P\left(\exists t: \mathrm{III}_n(t)\neq 0\right)&\leq \sum_{k=\frac{1}{2\alpha}\log(n)+1}^{\frac{1}{\alpha}\log(n)}\sum_{j=1}^n\P\left(\widetilde{V}_k(j)\neq 0\right)\\
	&\leq \sum_{k=\frac{1}{2\alpha}\log(n)+1}^{\frac{1}{\alpha}\log(n)}\sum_{j=1}^n\left(\P\left(X_k(m)>4^k\right)+\P\left(X_k\big(m+d_k\big)>4^k\right)\right)\\
	&\lesssim 2Cn\sum_{k=\frac{1}{2\alpha}\log(n)+1}^{\frac{1}{\alpha}\log(n)}\frac{4^{-k\alpha}}{k}\lesssim \frac{1}{\log(n)},
\end{align*}
Now $ \widehat{V}(j)$, $1\leq j \leq n$, are zero-mean, independent random variables. By Proposition \ref{prop: second moment for a>=1}, they also have second moment and for all $1\leq j\leq n$, 
\begin{align}
\E\left(\widehat{V}(j)^2\right)&\lesssim 2\sum_{k=\frac{1}{2\alpha}\log(n)+1}^{\frac{1}{\alpha}\log(n)}\E\left(X_k(j)^21_{\left[X_k(j)\leq 2^k\right]}\right) \nonumber \\
&\lesssim  \sum_{k=\frac{1}{2\alpha}\log(n)+1}^{\frac{1}{\alpha}\log(n)}\frac{2^{(2-\alpha)k}}{k}\lesssim \frac{n^{\frac{2}{\alpha}-1}}{\log(n)}.
\end{align}
It follows from Kolmogorov's maximal inequality that for every $\epsilon>0$,
\begin{align*}
\P\left(\max_{0\leq t\leq 1}\big|\mathrm{II}_n(t)\big|>\epsilon\right)&= \P\left(\max_{1\leq m\leq n}\left|S_m\left(\widehat{V}\right)\right|>\epsilon n^{1/\alpha}\right)\\
&\leq \epsilon^{-2\alpha}n^{-2/\alpha}\E\left(\left|S_n\left(\widehat{V}\right)\right|^2\right)\\
&= \epsilon^{-2\alpha}n^{-2/\alpha}\left(n\E\left(\widehat{V}(1)^2\right)\right)\lesssim \frac{1}{\epsilon^{2\alpha}\log(n)}.
\end{align*}
Here the first equality of the last line is true as $\widehat{V}(1),\ldots,\widehat{V}(n)$ are independent, zero-mean random variables with finite variance. This concludes the proof that 
\[
\left\|\mathrm{II}_n\right\|_\infty\xrightarrow[n\to\infty]{}0\ \ \text{in probability}. 
\]
The claim now follows from \eqref{eq: a>1 W_n(Y)-W_n(Z)} and the convergence in probability of $\mathrm{II}_n,\mathrm{III}_n$ to the zero function. 
\end{proof}	

\subsection{Proof of Lemma \ref{lem: a>1 small sym}}\label{sub: a>1 symmeteric small}
We first write 
\[
\W_n^{\mathbf{S}}(h)=\W_n^{(\mathbf{VS})}(h)+\W_n^{(\mathbf{LS})}(h)
\]
where 
\begin{align*}
\W_n^{(\mathbf{VS})}(h)&=\sum_{k=1}^{\sqrt{\log(n)}} \W_n(h_k)\\
\W_n^{(\mathbf{LS})}(h)&=\sum_{k=\sqrt{\log(n)}+1}^{\frac{1}{2\alpha}\log(n)} \W_n(h_k).
\end{align*}
The reason for this further decomposition is that $d_k>n$ if and only if $k>\sqrt{\log(n)}$ so that only in the very small $(\mathbf{VS})$ terms we no longer have full independence in the summands. The proof that $\W_n^{(\mathbf{LS})}(h)$ tends to the zero function is quite similar to the proof of the last part in Lemma \ref{lem: a>1 middle Z}.(a) while the proof of the other term makes use of the fact that we are dealing with coboundaries. 
\begin{lemma}\label{lem: a>1 LS}
$\left\|\W_n^{(\mathbf{LS})}(h)\right\|_\infty\xrightarrow[n\to\infty]{}0$. 
\end{lemma}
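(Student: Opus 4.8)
The goal is to show that the "large-small" block $\W_n^{(\mathbf{LS})}(h) = \sum_{k=\sqrt{\log n}+1}^{\frac{1}{2\alpha}\log n}\W_n(h_k)$ converges to the zero function in the uniform norm, in probability. The key structural features to exploit are: first, for all $k$ in this range we have $k > \sqrt{\log n}$, so $d_k = 4^{k^2} > n$; hence, just as in Lemma \ref{lem: a>1 middle Z}, the summands $f_k\circ T^j$ and $f_k\circ T^{d_k+j}$ for $0\le j\le n-1$ are, jointly over $k$ in the range and $j$, distributed as the independent family $\{Z_k(j) - Z_k(j+d_k)\}$. So it suffices to bound $\big\|n^{-1/\alpha}\sum_k \sum_{j\le \cdot}(Z_k(j)-Z_k(j+d_k))\big\|_\infty$ in probability, where now everything is genuinely independent. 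Second, $h_k = f_k - g_k$ has mean zero and — since $f_k \eqd Z_k(1)$ with $0\le Z_k(1)\le 4^k$ — the increments are bounded and have controllable second moments. So the natural tool is Kolmogorov's maximal inequality applied to the partial sums $S_m(\widehat V)$ where $\widehat V(m) := \sum_k (Z_k(m) - Z_k(m+d_k))$.

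\textbf{Main steps.} First I would pass from $\W_n^{(\mathbf{LS})}(h)$ to the corresponding process built from the $Z$'s, using the equal-distribution argument (continuous-mapping / Proposition \ref{prop:KV20}) exactly as in Lemmas \ref{lem: a<1 equal dist} and \ref{lem: a>1 equal dist}; this reduces to an i.i.d. problem on $(\Omega,\mathcal F,\P)$. Next, decompose each $Z_k$ into a truncated part $X_k 1_{[X_k\le 2^k]}$, a middle part $X_k 1_{[2^k < X_k \le 4^k]}$, and note $Z_k$ differs from $Y_k = X_k 1_{[2^k\le X_k\le 4^k]}$ by at most $4^{-k}$. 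The discretization error contributes at most $n^{1/\alpha}\cdot$ (something like) $\sum_k 2 n 4^{-k}\lesssim n\cdot 4^{-\sqrt{\log n}}$ to $n^{1/\alpha}\|\cdot\|_\infty$; dividing by $n^{1/\alpha}$ and noting $k$ ranges only up to $\frac{1}{2\alpha}\log n$ one checks this $\to 0$. For the "tail" part $X_k 1_{[X_k>4^k]}$: the probability that any such term is nonzero among $j\le n$, $k$ in the range, is $\lesssim n\sum_{k>\sqrt{\log n}} 4^{-\alpha k}/k \to 0$ by the tail bound from Lemma \ref{lem: BCT 1} (Proposition \ref{prop: ST}), so this part is zero with high probability. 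The main term is $\widehat V(m) = \sum_{k=\sqrt{\log n}+1}^{\frac1{2\alpha}\log n}(X_k(m)1_{[X_k(m)\le 2^k]} - X_k(m+d_k)1_{[X_k(m+d_k)\le 2^k]})$, which is zero-mean and, by Proposition \ref{prop: second moment for a>=1}, has
\[
\E(\widehat V(m)^2) \lesssim \sum_{k=\sqrt{\log n}+1}^{\frac1{2\alpha}\log n}\frac{2^{(2-\alpha)k}}{k} \lesssim \frac{n^{\frac1\alpha - \frac12}}{\sqrt{\log n}},
\]
the last step because the sum is dominated by its top term $k\approx\frac1{2\alpha}\log n$, giving $2^{(2-\alpha)\frac1{2\alpha}\log n} = n^{\frac1\alpha-\frac12}$. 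Then Kolmogorov's maximal inequality gives
\[
\P\Big(\max_{0\le t\le 1}\big|n^{-1/\alpha}S_{[nt]}(\widehat V)\big| > \e\Big) \le \e^{-2} n^{-2/\alpha}\, n\,\E(\widehat V(1)^2) \lesssim \e^{-2}\, n^{1 - 2/\alpha + 1/\alpha - 1/2}/\sqrt{\log n} = \e^{-2}\, n^{1/2 - 1/\alpha}/\sqrt{\log n},
\]
which tends to $0$ since $\alpha < 2$ forces $1/2 - 1/\alpha < 0$. Combining the three pieces via the triangle inequality finishes the proof.

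\textbf{Expected obstacle.} The delicate point is the bookkeeping of which exponent controls the second-moment sum and verifying that $n^{1/2-1/\alpha}/\sqrt{\log n}\to 0$ for all $\alpha\in[1,2)$ — in particular the endpoint $\alpha=1$, where the exponent is $-1/2$, still works but the margin is smallest; one should double-check that the truncation level $2^k$ (rather than $4^k$) is what makes $\E(X_k^2 1_{[X_k\le 2^k]})\lesssim 2^{(2-\alpha)k}/k$ with the right constant, via Proposition \ref{prop: second moment for a>=1}. The other mild subtlety is that the summands $\widehat V(m)$, $1\le m\le n$, are genuinely independent only once $d_k > n$ for every $k$ in the range, which is precisely why the decomposition cut was placed at $k = \sqrt{\log n}$; I would state this explicitly at the start. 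Everything else is a routine repetition of arguments already carried out for Lemma \ref{lem: a>1 middle Z}.(a).
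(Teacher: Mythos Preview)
Your overall strategy --- use $d_k>n$ for $k>\sqrt{\log n}$ to obtain independence of the increments, then apply Kolmogorov's maximal inequality with a second-moment bound coming from Proposition~\ref{prop: second moment for a>=1} --- is exactly the paper's approach. But the decomposition you describe is confused. The three pieces you list (discretization $Z_k-Y_k$, tail $X_k 1_{[X_k>4^k]}$, and your ``main term'' $\widehat V$ built from $X_k 1_{[X_k\le 2^k]}$) are precisely the constituents of $X_k-Z_k$, \emph{not} of $Z_k$. Indeed $Z_k$ vanishes on $\{X_k<2^k\}$, so $\widehat V$ contributes nothing whatsoever to the process you are trying to bound. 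You have transplanted the decomposition from Lemma~\ref{lem: a>1 middle Z}.(a), which compares $\W_n^{(\mathbf M)}(X)$ to $\W_n^{(\mathbf M)}(Z)$; here the task is to bound $\W_n^{(\mathbf{LS})}(Z)$ itself, and controlling $X-Z$ does not help (the $X$-process on the $\mathbf{LS}$ range has dispersion of order $(\ln\log n)^{1/\alpha}$ and certainly does not vanish). A second concrete failure: your sup-norm discretization estimate gives $n^{1-1/\alpha}4^{-\sqrt{\log n}}$ after dividing by $n^{1/\alpha}$, and for every $\alpha\in(1,2)$ the term $(1-1/\alpha)\log n$ eventually dominates $2\sqrt{\log n}$, so this quantity diverges rather than tending to $0$.

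The fix is to drop the decomposition entirely. Set $\psi_n:=\sum_{k=\sqrt{\log n}+1}^{\frac{1}{2\alpha}\log n}(f_k-f_k\circ T^{d_k})$; then $\psi_n,\psi_n\circ T,\dots,\psi_n\circ T^{n-1}$ are independent, bounded and zero-mean, and $\W_n^{(\mathbf{LS})}(h)(t)=n^{-1/\alpha}S_{[nt]}(\psi_n)$. Kolmogorov's inequality reduces matters to $\|\psi_n\|_2^2$. Since $Z_k(1)\le X_k(1)1_{[X_k(1)\le 4^k]}$, Proposition~\ref{prop: second moment for a>=1} with the $4^k$ (not $2^k$) cutoff gives $\E\big(Z_k(1)^2\big)\lesssim 4^{(2-\alpha)k}/k$, whence $\|\psi_n\|_2^2\lesssim\sum_k 4^{(2-\alpha)k}/k\lesssim n^{2/\alpha-1}/\log n$ and $m\big(\|\W_n^{(\mathbf{LS})}(h)\|_\infty>\epsilon\big)\lesssim \epsilon^{-2}/\log n$. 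No discretization step, no tail splitting, no $\widehat V$: the single pointwise bound $Z_k\le X_k 1_{[X_k\le 4^k]}$ does all the work.
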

\begin{proof}
Write 
\[
\psi_n:=\sum_{k=\sqrt{\log(n)}+1}^{\frac{1}{2\alpha}\log(n)}\left(f_k-f_k\circ T^{d_k}\right).
\]
We have 
\begin{itemize}
	\item $\psi_n\circ T^j$, $1\leq j\leq n$ are independent (since $d_k>n$ for all $k$ in the range of summation), bounded and $\int \psi_n dm=0$. 
	\item for all $t$, $\W_n^{(\mathbf{LS})}(h)(t)=n^{-1/\alpha}S_{[nt]}(\psi_n)$. 
\end{itemize} 
By Kolmogorov's maximal inequality, for all $\epsilon>0$
\begin{align*}
m\left(\left\|\W_n^{(\mathbf{LS})}(h)\right\|_\infty>\epsilon\right)&=m\left(\max_{1\leq k\leq n}\left|S_k\left(\psi_n\right)\right|>\epsilon n^{1/\alpha}\right)\\
&\leq \frac{\left\|S_n\left(\psi_n\right)\right\|_2^2}{\epsilon^{2}n^{2/\alpha}}=\epsilon^{-2}n^{1-\frac{2}{\alpha}}\|\psi_n\|_2^2,
\end{align*}
here the last equality follows from $S_n\left(\psi_n\right)$ being a sum of zero mean, square integrable, independent random variables. We will now give an upper bound for $\|\psi_n\|_2^2$.  Firstly, $\left\{f_k-f_k\circ T^{d_k}:\ k>\sqrt{\log(n)}\right\}$ is distributed as $\left\{Z_k(1)-Z_k(d_k+1):\ k>\sqrt{\log(n)}\right\}$.  Using in addition that for all $k\in \N$,
 \[
Z_k(1)\leq Y_k(1)\leq \left|X_k(1)\right|1_{\left[X_k(1)\leq 4^k\right]},
\] 
we observe that
\begin{align*}
\|\psi_n\|_2^2&= \E\left(\left(\sum_{k=\sqrt{\log(n)}+1}^{\frac{1}{2\alpha}\log(n)}\left(Z_k(1)-Z_k(1+d_k)\right)\right)^2\right)\\
&=\sum_{k=\sqrt{\log(n)}+1}^{\frac{1}{2\alpha}\log(n)}\E\left(\left(Z_k(1)-Z_k(1+d_k)\right)^2\right)\\
&\leq 4\sum_{k=\sqrt{\log(n)}+1}^{\frac{1}{2\alpha}\log(n)}\E\left(\left(Z_k(1)\right)^2\right)\\
&\leq 4\sum_{k=\sqrt{\log(n)}+1}^{\frac{1}{2\alpha}\log(n)}\E\left(\left|X_k(1)\right|^21_{\left[X_k(1)\leq 4^k\right]}\right),\ \ \text{by Proposition \ref{prop: second moment for a>=1}}\\
&\lesssim 4\sum_{k=\sqrt{\log(n)}+1}^{\frac{1}{2\alpha}\log(n)}C\frac{4^{(2-\alpha)k}}{k}\lesssim \frac{n^{\frac{2}{\alpha}-1}}{\log(n)}. 
\end{align*} 
Plugging this in the previous upper bound, we see that for all $\epsilon>0$, 
\[
m\left(\left\|\W_n^{(\mathbf{LS})}(h)\right\|_\infty>\epsilon\right)\lesssim \epsilon^{-2}n^{1-\frac{2}{\alpha}}\|\psi_n\|_2^2\lesssim \frac{\epsilon^{-2}}{\log(n)}\xrightarrow[n\to\infty]{}0,
\]
proving the claim. 
\end{proof}

We turn to treat $\W_n^{(\mathbf{VS})}(h)$. As before we define
\[
\varphi_n:=\sum_{k=1}^{\sqrt{\log(n)}}\left(f_k-f_k\circ T^{d_k}\right),
\]
so that for all $t\in[0,1]$, $\W_n^{(\mathbf{VS})}(h)=S_{[nt]}\big(\varphi_n\big)$. It is no longer guaranteed that $\varphi_n,\ldots,\varphi_n\circ T^n$ are independent. For this reason we can no longer bound the maximum using L\'evy inequality and we will make use of a more general maximal inequality. The first step involves bounding the square moments of random variables and we make repetitive use of the following most crude bound; 

\begin{claim}\label{claim: crude variance bound}
Let $U_1,U_2,\ldots,U_N$ be square integrable random variables, then 
\[
\E\left(\left(\sum_{j=1}^NU_j\right)^2\right)\leq N\sum_{j=1}^N \E\left(\big(U_j\big)^2\right)
\]	
\end{claim}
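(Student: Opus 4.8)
The plan is to deduce the claim from the elementary deterministic inequality
\[
\Big(\sum_{j=1}^N a_j\Big)^2 \;\le\; N\sum_{j=1}^N a_j^2, \qquad a_1,\dots,a_N\in\R,
\]
by applying it pointwise and then integrating. First I would recall why this scalar inequality holds: it is exactly the Cauchy--Schwarz inequality applied to the vectors $(1,\dots,1)$ and $(a_1,\dots,a_N)$ in $\R^N$, equivalently the statement that the arithmetic mean of $a_1,\dots,a_N$ has square at most the arithmetic mean of $a_1^2,\dots,a_N^2$ (Jensen's inequality for the convex function $t\mapsto t^2$ and the uniform probability measure on $\{1,\dots,N\}$).

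Next I would apply this with $a_j=U_j(\omega)$ for each fixed sample point $\omega$, obtaining the pointwise bound $\big(\sum_{j=1}^N U_j(\omega)\big)^2\le N\sum_{j=1}^N U_j(\omega)^2$ for every $\omega$. Since each $U_j\in L^2$, a finite sum of them is again in $L^2$, so both sides are integrable; taking expectations and using linearity of the integral gives $\E\big((\sum_{j=1}^N U_j)^2\big)\le N\sum_{j=1}^N \E(U_j^2)$, which is the claim.

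There is essentially no obstacle here; the only bookkeeping is the integrability remark, which is immediate. As an alternative one could expand $\E\big((\sum_j U_j)^2\big)=\sum_{i,j}\E(U_iU_j)$ and bound each of the $N^2$ cross terms by $\E(U_iU_j)\le \tfrac12\E(U_i^2)+\tfrac12\E(U_j^2)$ via Cauchy--Schwarz, then sum; this yields the same constant $N$.
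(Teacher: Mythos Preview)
Your proof is correct. The paper does not supply a proof of this claim at all (it is stated as an elementary fact and used immediately), so your Cauchy--Schwarz/Jensen argument is entirely in the spirit intended and fills the omitted step.
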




\begin{lemma}\label{lem: a>1 such a long one}
There exists a global constant $C>0$ so that for all $1\leq l<j\leq n$ and $1\leq k\leq \sqrt{\log(n)}$, 
\[
\left\|S_j\left(f_k-f_k\circ T^{d_k}\right)-S_l\left(f_k-f_k\circ T^{d_k}\right)\right\|_2^2\le C(j-l)\frac{4^{(2-\alpha)k}}{k}.
\]
\end{lemma}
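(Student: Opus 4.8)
The plan is to exploit that $h_k:=f_k-f_k\circ T^{d_k}$ is a coboundary, which sidesteps the fact that the copying lemma only prescribes the joint law of the orbit of $f_k$ on the window $\{0,1,\dots,2d_k-1\}$ (recall $d_k=4^{k^2}$). First I would introduce the transfer function $\Phi_k:=\sum_{i=0}^{d_k-1}f_k\circ T^i$ and record the telescoping identity $f_k-f_k\circ T^{d_k}=\Phi_k-\Phi_k\circ T$. Summing it, $S_j(h_k)-S_l(h_k)=\sum_{i=l}^{j-1}(\Phi_k\circ T^i-\Phi_k\circ T^{i+1})=\Phi_k\circ T^l-\Phi_k\circ T^j$, which is centered since $T$ preserves $m$; hence $\|S_j(h_k)-S_l(h_k)\|_2^2=\var(\Phi_k\circ T^l-\Phi_k\circ T^j)$. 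The only inputs I need from the construction are: by Corollary \ref{cor: weak Sinai} (together with $g_k=f_k\circ T^{d_k}$) the variables $f_k,f_k\circ T,\dots,f_k\circ T^{2d_k-1}$ are i.i.d., each $Z_k(1)$-distributed, so $\var(\Phi_k)=d_k\var(f_k)\le d_k\E(f_k^2)$; and since $Z_k(1)\le Y_k(1)\le X_k(1)\ind_{[X_k(1)\le 4^k]}$, Proposition \ref{prop: second moment for a>=1} gives a global $C_0$ with $\E(f_k^2)\le C_0\,4^{(2-\alpha)k}/k$.

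I would then split on $N:=j-l$. If $N\ge d_k$: since $\Phi_k\circ T^l$ and $\Phi_k\circ T^j$ each have variance $\var(\Phi_k)$ by stationarity, the elementary inequality $\var(U-V)\le(\sqrt{\var U}+\sqrt{\var V})^2$ yields $\|S_j(h_k)-S_l(h_k)\|_2^2\le 4\var(\Phi_k)\le 4C_0\,d_k\,4^{(2-\alpha)k}/k\le 4C_0\,N\,4^{(2-\alpha)k}/k$, the last step using $d_k\le N$. If $N<d_k$: put $A:=\sum_{i=l}^{j-1}f_k\circ T^i$, so $S_j(h_k)-S_l(h_k)=A-A\circ T^{d_k}$ and $\|A-A\circ T^{d_k}\|_2^2=2\E(A^2)-2\E(A\cdot A\circ T^{d_k})$. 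All lags $|i-i'|$ occurring in $\E(A^2)$ lie in $\{0,\dots,N-1\}$ and all lags in $\E(A\cdot A\circ T^{d_k})$ lie in $\{d_k-N+1,\dots,d_k+N-1\}$, both inside $\{0,\dots,2d_k-1\}$; and for $1\le s\le 2d_k-1$ the variables $f_k$ and $f_k\circ T^s$ are independent, so $\E(f_k\cdot f_k\circ T^s)$ equals $\E(f_k)^2$ for those $s$ and $\E(f_k^2)$ for $s=0$. Substituting, $\E(A^2)=N\E(f_k^2)+(N^2-N)\E(f_k)^2$ and $\E(A\cdot A\circ T^{d_k})=N^2\E(f_k)^2$, whence $\|A-A\circ T^{d_k}\|_2^2=2N\var(f_k)\le 2C_0\,N\,4^{(2-\alpha)k}/k$. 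Taking $C:=4C_0$ covers both cases.

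I expect the case $N\ge d_k$ to be the only genuinely delicate point: there the shifts $f_k\circ T^{i+d_k}$ with $i\le j-1$ reach indices past $2d_k$, so the joint law of the shifts of $f_k$ entering the increment is not prescribed by the construction, and a direct covariance expansion — or the crude estimate of Claim \ref{claim: crude variance bound} — only produces an $O(N^2)$ bound. The transfer-function rewriting is precisely what removes this: afterwards one needs only $\var(\Phi_k)$, and this quantity is translation invariant, hence insensitive to the behaviour of the orbit of $f_k$ outside the controlled window, while the spurious factor $d_k$ it carries is absorbed exactly because we are in the regime $N\ge d_k$.
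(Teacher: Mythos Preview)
Your proof is correct, and it is organized somewhat differently from the paper's. The paper centers $f_k$ to $F_k:=f_k-\int f_k\,dm$, uses the identity $S_j(F_k-F_k\circ T^{d_k})=S_{\min(j,d_k)}(F_k)-S_{\min(j,d_k)}(F_k)\circ T^{\max(j,d_k)}$, and then carries out a four-way case split on the relative positions of $l,j,d_k$ and $j-l$; in each case the increment is decomposed as $A+B$ where $A$ and $B$ are sums $S_L(F_k)\circ T^M$ with $L\le d_k$, and the bound $\|S_L(F_k)\circ T^M\|_2^2=L\|F_k\|_2^2$ (from the i.i.d.\ structure on the window) together with Claim \ref{claim: crude variance bound} finishes each case. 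Your argument instead telescopes via the transfer function $\Phi_k$, reducing the increment to $\Phi_k\circ T^l-\Phi_k\circ T^j$, and then splits only on $N:=j-l$ versus $d_k$: for $N\ge d_k$ the triangle inequality for standard deviations gives $4\var(\Phi_k)=4d_k\var(f_k)\le 4N\var(f_k)$; for $N<d_k$ a covariance expansion (using only lags $\le 2d_k-1$, hence within the i.i.d.\ window) yields exactly $2N\var(f_k)$. This avoids the paper's case analysis and makes explicit the mechanism you describe: once the increment is written as a difference of two shifts of $\Phi_k$, only $\var(\Phi_k)$ enters, and the loss of a factor $d_k$ there is harmless precisely in the regime $N\ge d_k$.
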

\begin{proof}
Let $\mu_k:=\int f_kdm$ and write $F_k:=f_k-\mu_k$. For every $j\leq n$, 
\begin{align*}
S_j\left(f_k-f_k\circ T^{d_k}\right)&=S_j\left(F_k-F_k\circ T^{d_k}\right)\\
&=S_{\min(j,d_k)}(F_k)-S_{\min(j,d_k)}\big(F_k\big)\circ T^{\max(j,d_k)}.
\end{align*}
Consequently, for every $1\leq l<j\leq n$, 
\[
S_j\left(f_k-f_k\circ T^{d_k}\right)-S_l\left(f_k-f_k\circ T^{d_k}\right)=A+B,
\]
where
\[
A:=\begin{cases}
	\vspace{1.8mm}	\sum_{r=l}^{j-1}F_k\circ T^r,& l<j<d_k,\\
	\vspace{1.8mm}	\sum_{r=l}^{d_k-1}F_k\circ T^r,&l<d_k\leq j,\\
	0, &\text{otherwise}. 
\end{cases}
\]
and 
\[
B:=\begin{cases}
\vspace{1.8mm}	-\sum_{r=d_k+l}^{d_k+j-1}F_k\circ T^{r},& l<j\leq d_k,\\
\vspace{1.8mm}	\sum_{r=d_k}^{j-1}F_k\circ T^r-\sum_{r=d_k+l}^{d_k+j-1}F_k\circ T^r,& l< d_k<j\leq l+d_k, \\
\vspace{1.8mm} \sum_{r=l}^{j-1}F_k\circ T^r-\sum_{r=d_k+l}^{d_k+j-1}F_k\circ T^r,&j-l\leq d_k\leq l<j,\\	
\sum_{r=\max(j,d_k)}^{j+d_k-1}F_k\circ T^r-\sum_{r=l}^{l+d_k-1}F_k\circ T^r,& j-l>d_k. 
\end{cases}
\]
We will next show that there exists a constant $C$ so that
 \begin{equation}
 \|A\|_2^2,\|B\|_2^2\leq 4C(j-l)\frac{4^{(2-\alpha)k}}{k},
 \end{equation}
The statement follows from  this and Claim \ref{claim: crude variance bound}. 

Recall that for all $0\leq L\leq d_k$ and $M\in\N$, $$S_L(F_k)\circ T^M=\sum_{r=M}^{M+L-1}F_k\circ T^r$$ is a sum of i.i.d. zero mean square integrable random variables. We deduce that so long as $L\leq d_k$, 
\begin{align*}
\left\|S_L(F_k)\circ T^M\right\|_2^2=L\left\|F_k\right\|_2^2. 
\end{align*}
A similar argument as in the proof of Lemma \ref{lem: a>1 LS} shows that there exists $c>0$ so that 
\[
 \|F_k\|_2^2\propto \|f_k\|_2\leq c\frac{4^{(2-\alpha)k}}{k} . 
\]
We conclude that there exists $c_2>0$ so that for all $L\leq d_k$ and $M\in\N$
\begin{equation}\label{eq: core bound VS}
\left\|S_L(F_k)\circ T^M\right\|_2^2\leq c_2 L\frac{4^{(2-\alpha)k}}{k}.
\end{equation}
Noting that in the definition of $A$ all terms on the right are of the form $S_L(F_k)\circ T^M$ with $L\leq d_k$, we observe that 
\[
\|A\|_2^2\leq c_2\frac{4^{(2-\alpha)k}}{k}\begin{cases}
j-l.& l<j\leq d_k,\\
d_k-j,&\ l\leq d_k<j,\\
0, &\text{otherwise},
\end{cases}
\]
and thus
\[
\|A\|_2^2\leq c_2(j-l)\frac{4^{(2-\alpha)k}}{k}.
\]
Now by Claim \ref{claim: crude variance bound},
\[
\|B\|_2^2\leq \begin{cases}
	\left\|S_{j-l}(F_k)\circ T^{d_k}\right\|_2^2,& l<j\leq d_k,\\
	2\left(\left\|S_{j-d_k}(F_k)\circ T^{d_k}\right\|_2^2+\left\|S_{j-l}(F_k)\circ T^{l+d_k}\right\|_2^2\right),& l\leq d_k<j\leq l+d_k, \\
	2\left(\left\|S_{j-l}(F_k)\circ T^{l}\right\|_2^2+\left\|S_{j-l}(F_k)\circ T^{d_k+l}\right\|_2^2\right),&j-l\leq d_k\leq l<j,\\
	2\left(\left\|S_{\min(l,d_k)}(F_k)\circ T^{\max(l,d_k)}\right\|_2^2+\left\|S_{d_k}(F_k)\circ T^l\right\|_2^2\right),& j-l>d_k. 
\end{cases}
\]
A similar argument as the one on $\|A\|_2^2$ shows that 
\[
\|B\|_2^2\leq c_2\frac{4^{(2-\alpha)k}}{k}\begin{cases}
	(j-l),& l<j\leq d_k,\\
	2\left((j-d_k)+(j-l)\right),& l\leq d_k<j\leq l+d_k, \\
	4(j-l)& j-l\leq d_k\leq l<j,\\
	2\left(\min(l,d_k)+d_k\right),& j-l>d_k,
\end{cases}
\]
and
\[
\|B\|_2^2\leq 4c_2(j-l)\frac{4^{(2-\alpha)k}}{k}.
\]
This concludes the proof. 
\end{proof}
\begin{corollary}\label{cor: moment VS}
For every $\kappa>0$, there exists $C>0$ such that for all $1\leq l<j\leq n$, 
\[
\left\|S_j\big(\varphi_n\big)-S_l\big(\varphi_n\big)\right\|_2^2\leq C(j-l)n^\kappa. 
\]
\end{corollary}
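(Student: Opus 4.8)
The plan is to deduce Corollary \ref{cor: moment VS} directly from Lemma \ref{lem: a>1 such a long one} and the crude variance bound of Claim \ref{claim: crude variance bound}, the only extra ingredient being the elementary fact that a quantity growing like $4^{(2-\alpha)\sqrt{\log n}}$ is dominated by every positive power of $n$.

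First I would expand, for $1\le l<j\le n$,
\[
S_j(\varphi_n)-S_l(\varphi_n)=\sum_{k=1}^{\lfloor\sqrt{\log n}\rfloor}\Big(S_j\big(f_k-f_k\circ T^{d_k}\big)-S_l\big(f_k-f_k\circ T^{d_k}\big)\Big),
\]
which exhibits the left-hand side as a sum of at most $\sqrt{\log n}$ square-integrable random variables. Applying Claim \ref{claim: crude variance bound} and then Lemma \ref{lem: a>1 such a long one} termwise gives
\[
\big\|S_j(\varphi_n)-S_l(\varphi_n)\big\|_2^2\le \sqrt{\log n}\sum_{k=1}^{\lfloor\sqrt{\log n}\rfloor}C(j-l)\frac{4^{(2-\alpha)k}}{k}\le C(j-l)\,\sqrt{\log n}\sum_{k=1}^{\lfloor\sqrt{\log n}\rfloor}\frac{4^{(2-\alpha)k}}{k}.
\]
Since $\alpha<2$ we have $4^{2-\alpha}>1$, so $\sum_{k=1}^{M}4^{(2-\alpha)k}/k\le \sum_{k=1}^{M}4^{(2-\alpha)k}\lesssim 4^{(2-\alpha)M}$ with an implied constant depending only on $\alpha$; taking $M=\lfloor\sqrt{\log n}\rfloor$ yields
\[
\big\|S_j(\varphi_n)-S_l(\varphi_n)\big\|_2^2\lesssim (j-l)\,\sqrt{\log n}\;4^{(2-\alpha)\sqrt{\log n}}.
\]

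It then remains to absorb the factor $\sqrt{\log n}\,4^{(2-\alpha)\sqrt{\log n}}$ into $n^{\kappa}$. In base $2$ this factor equals $2^{\,2(2-\alpha)\sqrt{\log n}+\frac12\log\log n}$, whose exponent is $o(\log n)$; hence for every $\kappa>0$ there is $N_0=N_0(\kappa,\alpha)$ such that $\sqrt{\log n}\,4^{(2-\alpha)\sqrt{\log n}}\le n^{\kappa}$ for all $n\ge N_0$. For the finitely many $n<N_0$ the bound is immediate: each $f_k$ takes values in the finite set $\{2^k,2^k+4^{-k},\dots,4^k\}\cup\{0\}$, so $S_j(\varphi_n)-S_l(\varphi_n)$ is bounded by a constant depending only on $N_0$ (and $j-l\ge 1$), and enlarging $C$ handles these cases.

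I do not expect a genuine obstacle here: the substantive estimate — control of the $L^2$ norm of the increments of the coboundary partial sums at each scale — is exactly the content of Lemma \ref{lem: a>1 such a long one}, and the corollary only records that summing over the $\lesssim\sqrt{\log n}$ active scales costs a sub-polynomial factor. The one point requiring a little care is the uniformity of the constant $C$ over all $n$ and all pairs $l<j$, which is precisely why the small-$n$ range must be treated separately using the boundedness of the $f_k$.
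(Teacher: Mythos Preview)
Your proof is correct and follows essentially the same route as the paper: apply Claim~\ref{claim: crude variance bound} to the sum over $k\le\sqrt{\log n}$, insert the per-scale bound from Lemma~\ref{lem: a>1 such a long one}, and observe that the remaining factor $\sqrt{\log n}\sum_{k\le\sqrt{\log n}}4^{(2-\alpha)k}/k$ grows sub-polynomially and is therefore absorbed into $n^{\kappa}$. Your added remark that the finitely many small $n$ must be handled separately (via the boundedness of the $f_k$) to secure a single constant $C$ valid for all $n$ is a detail the paper leaves implicit.
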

\begin{proof}
By Claim \ref{claim: crude variance bound}, 
\[
\left\|S_j\big(\varphi_n\big)-S_l\big(\varphi_n\big)\right\|_2^2\leq \sqrt{\log(n)}\sum_{k=1}^{\sqrt{\log(n)}}\left\|S_j\left(f_k-f_k\circ T^{d_k}\right)-S_l\left(f_k-f_k\circ T^{d_k}\right)\right\|_2^2.
\]
Plugging in the bound of Lemma \ref{lem: a>1 such a long one} on the right hand side we see that there exists $C>0$ such that 
\[
\left\|S_j\big(\varphi_n\big)-S_l\big(\varphi_n\big)\right\|_2^2\leq C(j-l) \sqrt{\log(n)}\sum_{k=1}^{\sqrt{\log(n)}}\frac{4^{(2-\alpha)k}}{k}.
\]
Since 
\[
\sum_{k=1}^{\sqrt{\log(n)}}\frac{4^{(2-\alpha)k}}{k}\lesssim \frac{4^{2\sqrt{\log(n)}}}{\sqrt{\log(n)}} \ll \frac{n^\kappa}{\sqrt{\log(n)}},
\]
the claim follows.
\end{proof}
\begin{lemma}\label{lem: a>1 VS}
	$\left\|\W_n^{(\mathbf{VS})}(h)\right\|_\infty\xrightarrow[n\to\infty]{}0$. 
\end{lemma}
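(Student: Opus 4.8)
The plan is to reduce the supremum to a maximum of partial sums of the single sequence $(\varphi_n\circ T^j)_{j\ge0}$ and then control that maximum by a Serfling--Móricz type maximal inequality, which is exactly what Corollary \ref{cor: moment VS} was set up to feed. First I would record that, by the definitions of $\varphi_n$ and of $\W_n^{(\mathbf{VS})}(h)$, for every $t\in[0,1]$ one has $\W_n^{(\mathbf{VS})}(h)(t)=n^{-1/\alpha}S_{[nt]}(\varphi_n)$, and hence
\[
\big\|\W_n^{(\mathbf{VS})}(h)\big\|_\infty=n^{-1/\alpha}\max_{1\le m\le n}\big|S_m(\varphi_n)\big|.
\]
So it suffices to prove that $n^{-1/\alpha}\max_{1\le m\le n}|S_m(\varphi_n)|\to0$ in measure.

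Second, I would invoke the classical maximal inequality for partial sums with controlled $L^2$ norms of increments. Corollary \ref{cor: moment VS} gives, for a $\kappa>0$ to be chosen, a constant $C=C(\kappa)$ independent of $n$ with $\|S_j(\varphi_n)-S_l(\varphi_n)\|_2^2\le C(j-l)n^\kappa$ for all $1\le l<j\le n$; writing $u_i:=Cn^\kappa$ this is precisely the hypothesis $\|S_j(\varphi_n)-S_l(\varphi_n)\|_2^2\le\sum_{i=l+1}^j u_i$. In this borderline (linearly growing) case the maximal inequality costs a power of the logarithm, so the dyadic-chaining argument yields a global constant $C'$ with
\[
\E\Big(\max_{1\le m\le n}S_m(\varphi_n)^2\Big)\le C'(\log n)^2\sum_{i=1}^n u_i= C'C\,(\log n)^2\, n^{1+\kappa}.
\]

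Third, I would conclude with Chebyshev's inequality: for every $\e>0$,
\[
m\Big(\big\|\W_n^{(\mathbf{VS})}(h)\big\|_\infty>\e\Big)\le \e^{-2}n^{-2/\alpha}\,\E\Big(\max_{1\le m\le n}S_m(\varphi_n)^2\Big)\le C'C\,\e^{-2}(\log n)^2\,n^{1+\kappa-2/\alpha}.
\]
Since $1\le\alpha<2$ we have $2/\alpha-1>0$, so fixing at the outset $\kappa:=\tfrac12(2/\alpha-1)>0$ makes the exponent $1+\kappa-2/\alpha=-\kappa<0$; therefore $(\log n)^2 n^{-\kappa}\to0$ and the right-hand side vanishes, which proves the lemma.

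The main obstacle is precisely the borderline nature of the moment estimate: because Corollary \ref{cor: moment VS} controls $\|S_j(\varphi_n)-S_l(\varphi_n)\|_2^2$ only linearly in $j-l$ (rather than by $(j-l)^{\gamma}$ with $\gamma>1$), one cannot pass from the increment bound to the bound on the maximum ``for free'' via orthogonality, and must instead run the Serfling/Móricz dyadic-chaining argument, which inflates the estimate by a factor $(\log n)^2$. This loss is harmless here only because the gap between the normalisation exponent $2/\alpha$ and $1$ is a fixed strictly positive number, leaving enough slack to absorb both the logarithm and the auxiliary polynomial factor $n^\kappa$; one should check along the way that all constants produced (in Corollary \ref{cor: moment VS} and in the maximal inequality) are genuinely independent of $n$ once $\kappa$ has been fixed.
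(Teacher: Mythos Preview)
Your proof is correct and follows the same route as the paper: both reduce $\left\|\W_n^{(\mathbf{VS})}(h)\right\|_\infty$ to $n^{-1/\alpha}\max_{1\le m\le n}|S_m(\varphi_n)|$, feed the increment bound from Corollary~\ref{cor: moment VS} into a maximal inequality for partial sums, and conclude by Chebyshev after fixing $\kappa$ with $1+\kappa<2/\alpha$. The only difference is the maximal inequality used: the paper cites \cite[Theorem 10.2]{MR1700749} directly on the tail probabilities of the increments, whereas you invoke the $L^2$ Rademacher--Menshov/M\'oricz chaining, which is the natural tool in the borderline regime where $\|S_j-S_l\|_2^2$ grows only linearly in $j-l$ and costs the harmless extra factor $(\log n)^2$; your explicit acknowledgment of this borderline issue is a plus.
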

\begin{proof}
	Let $\epsilon>0$. We have
	\[
	m\left(\left\|\W_n^{(\mathbf{VS})}(h)\right\|_\infty>\epsilon\right)=m\left(\max_{1\leq j\leq n}\left|S_j\big(\varphi_n\big)\right|>\epsilon n^{1/\alpha}\right).
	\]
	Fix $\kappa>0$ small enough so that $\kappa+1<\frac{2}{\alpha}$. By Corollary \ref{cor: moment VS} and Markov's inequality, for all $1\leq l<j\leq n$, 
	\[
	m\left(\left|S_j\big(\varphi_n\big)-S_l\big(\varphi_n\big)\right|>\epsilon n^{1/\alpha}\right)\leq C\epsilon^{-2}(j-l)n^{\kappa-\frac{2}{\alpha}}.
	\] 
	By \cite[Theorem 10.2]{MR1700749} with $\beta=\frac{1}{2}$ and $u_l:=\sqrt{C}n^{\frac{\kappa}{2}-\frac{1}{\alpha}}$, 
	\[
	m\left(\max_{1\leq j\leq n}\left|S_j\big(\varphi_n\big)\right|>\epsilon n^{1/\alpha}\right)\leq C\epsilon^{-2}n^{1+\kappa-\frac{2}{\alpha}}\xrightarrow[n\to\infty]{}0. 
	\]
\end{proof}
\begin{proof}[Proof of Lemma \ref{lem: a>1 small sym}]
The conclusion follows from Lemmas \ref{lem: a>1 LS} and \ref{lem: a>1 VS} and the triangle inequality. 		
\end{proof}	
\section{skewed CLT for $\alpha\in(1,2)$}\label{sec: Skewed CLT a>1}
Assume $\alpha\in(1,2)$ and $(f_k)_{k=1}^\infty$ are the functions from Corollary \ref{cor: weak Sinai} where $X_k(j)$ are $S_\alpha\left(\sqrt[\alpha]{1/l},1,0\right)$ random variables and $Z_k(j)$ is the corresponding discretisation of the truncation $Y_k(j)$.
Recall that  $D_k:=4^{\alpha k}$,
\[
\varphi_k:=\frac{1}{D_k}\sum_{j=0}^{D_k-1}f_k\circ T^j,
\]
$h_k:=f_k-\varphi_k$ and $h=\sum_{k=1}^\infty h_k$. The function $h$ is well defined by Lemma \ref{lem: h is well defined a>1}.

We aim to show that 
\[
\frac{S_n(h)+B_n}{n^{1/\alpha}}\Rightarrow^d S_\alpha(\ln(2),1,0),
\]
where 

\[
B_n:=n\sum_{k=\frac{1}{2\alpha}\log(n)}^{\frac{1}{\alpha}\log(n)}\E\left(X_k(1)1_{\left[X_k(1)\leq 2^k\right]}\right).
\]

\subsection{Proof of Theorem \ref{skewed CLT for a>1}}
The strategy of the proof starts with the decomposition,
\begin{equation}\label{eq: decomp for a>1}
S_n(h)+B_n=S_n^{(\mathbf{S})}(h)+S_n^{(\mathbf{M})}(f)+S_n^{(\mathbf{L})}(f)-V_n(\varphi)
\end{equation}
where
\begin{align*}
	S_n^{(\mathbf{S})}(h)&:=\sum_{k=1}^{\frac{1}{2\alpha}\log(n)}S_{n}\left(h_k\right),\\ S_n^{(\mathbf{M})}(f)&:=B_n+\sum_{k=\frac{1}{2\alpha}\log(n)+1}^{\frac{1}{\alpha}\log(n)}S_{n}\left(f_k-\int f_k dm\right)\\
	S_n^{(\mathbf{L})}(f)&:=\sum_{k=\frac{1}{\alpha}\log(n)+1}^{\infty}S_{n}\left(f_k-\int f_kdm\right) \ \ \text{and}\\
	V_n(\varphi)&:=\sum_{k=\frac{1}{2\alpha}\log(n)+1}^{\infty}S_{n}\left(\varphi_k-\int\varphi_kdm\right).
\end{align*}	
Note that in deriving \eqref{eq: decomp for a>1} we used that for all $k\in\N$, $\int f_kdm=\int \varphi_kdm$ and that both $\sum_{k=1}^N f_k$ and $\sum_{k=1}^N\varphi_k$ converge in $L^1(m)$ as $N\to\infty$. 

The proof of Theorem \ref{skewed CLT for a>1} is by showing that when normalized, three of the four terms converge to $0$ in probability and the remaining one converges in distribution to an $S_\alpha(\ln(2),0,0)$ random variable.

 \begin{lemma}\label{lem: a>>1 large}
 \[
\lim_{n\to\infty} m\left(S_n^{(\mathbf{L})}(f)\neq 0\right)=0.
 \]
\end{lemma}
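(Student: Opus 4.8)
The plan is to estimate $m\bigl(S_n^{(\mathbf{L})}(f)\neq 0\bigr)$ by a crude union bound over the first $n$ points of the orbit, exactly in the spirit of the proof of Lemma \ref{lem: a<1 Large}. The relevant observation is that $S_n^{(\mathbf{L})}(f)$ is assembled only from the functions $f_k$ with $k>\tfrac1\alpha\log n$ evaluated along $x,Tx,\dots,T^{n-1}x$; its purely deterministic centering over this range is carried jointly with the tail of $V_n(\varphi)$ in \eqref{eq: decomp for a>1} and does not affect vanishing, so for the present purpose $S_n^{(\mathbf{L})}(f)=\sum_{k>\frac1\alpha\log n}\sum_{j=0}^{n-1}f_k\circ T^j$. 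Consequently, on the event that every one of these $f_k\circ T^j$ is zero the quantity $S_n^{(\mathbf{L})}(f)$ is zero as well, and I would record the inclusion
\[
\bigl[\,S_n^{(\mathbf{L})}(f)\neq 0\,\bigr]\ \subseteq\ \bigcup_{k=\frac1\alpha\log n+1}^{\infty}\ \bigcup_{j=0}^{n-1}\bigl[\,f_k\circ T^j\neq 0\,\bigr].
\]
By Lemma \ref{lem: BCT 1} the set $\{k:f_k\neq 0\}$ is $m$-a.s.\ finite, so on the complement of the right-hand side the (a priori infinite) sum defining $S_n^{(\mathbf{L})}(f)$ reduces to a finite sum of zeros and the manipulation is legitimate.

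Applying $m$ to both sides, using the union bound and the $T$-invariance of $m$ (which replaces $m(f_k\circ T^j\neq 0)$ by $m(f_k\neq 0)$), I would then invoke the estimate already obtained inside the proof of Lemma \ref{lem: BCT 1}: since $f_k$ is distributed as $Z_k(1)$, one has $0\le Z_k(1)\le X_k(1)\mathbf 1_{[X_k(1)\ge 2^k]}$ with $X_k(1)$ an $S_\alpha(\sigma_k,1,0)$ random variable, $\sigma_k^\alpha=1/k$, so there is a global constant $C$ with
\[
m(f_k\neq 0)=\P\bigl(Z_k(1)\neq 0\bigr)\le \P\bigl(X_k(1)>2^k\bigr)\le C\,\frac{2^{-\alpha k}}{k}.
\]
Combining the last two displays gives $m\bigl(S_n^{(\mathbf{L})}(f)\neq 0\bigr)\le C\,n\sum_{k=\frac1\alpha\log n+1}^{\infty}2^{-\alpha k}/k$.

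Finally, since $\log$ is taken in base $2$, the tail series is bounded up to a constant by its first term, $\sum_{k>\frac1\alpha\log n}2^{-\alpha k}/k\lesssim 2^{-\log n}\big/\bigl(\tfrac1\alpha\log n\bigr)=\alpha/(n\log n)$, whence $m\bigl(S_n^{(\mathbf{L})}(f)\neq 0\bigr)\lesssim 1/\log n\to 0$ as $n\to\infty$, which is the assertion. I do not expect a genuine obstacle here: the only step that needs a line of care is the set inclusion in the first paragraph — one must keep track of the deterministic centering and confirm that it is absorbed together with $V_n(\varphi)$ in \eqref{eq: decomp for a>1}, so that what is being shown to vanish really is $S_n^{(\mathbf{L})}(f)$ — after which the computation is word for word the one already carried out in Lemmas \ref{lem: a<1 Large} and \ref{lem: a>1 Large}.
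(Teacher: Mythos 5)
Your union-bound computation is exactly the argument the paper intends (it omits the proof, pointing back to Lemma \ref{lem: a<1 Large}), and it correctly shows
\[
m\Bigl(\textstyle\sum_{k>\frac{1}{\alpha}\log n}S_n(f_k)\neq 0\Bigr)\ \leq\ Cn\sum_{k>\frac{1}{\alpha}\log n}\frac{2^{-\alpha k}}{k}\ \lesssim\ \frac{1}{\log n}\xrightarrow[n\to\infty]{}0 .
\]
The gap is in how you dispose of the centering. In this section $S_n^{(\mathbf{L})}(f)$ is \emph{defined} as $\sum_{k>\frac{1}{\alpha}\log n}S_n\bigl(f_k-\int f_k\,dm\bigr)$, and the cancellation between $\int f_k\,dm$ and $\int\varphi_k\,dm$ has already been spent in deriving \eqref{eq: decomp for a>1}; nothing in $V_n(\varphi)$ absorbs the centering of the $\mathbf{L}$-range. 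Consequently, on the complement of your union the quantity $S_n^{(\mathbf{L})}(f)$ equals the strictly negative deterministic constant $-n\sum_{k>\frac{1}{\alpha}\log n}\int f_k\,dm$ (each $Z_k(1)$ has positive mean), so your set inclusion fails — and indeed the lemma as literally stated cannot be proved; this is an imprecision of the paper as well, not only of your write-up.

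What is true, and all that the proof of Theorem \ref{skewed CLT for a>1} actually uses, is that $n^{-1/\alpha}S_n^{(\mathbf{L})}(f)\to 0$ in probability. This follows from your union bound applied to the uncentered sum together with the one-line deterministic estimate, via Corollary \ref{cor: moments below a},
\[
n\sum_{k>\frac{1}{\alpha}\log n}\int f_k\,dm\ =\ n\sum_{k>\frac{1}{\alpha}\log n}\E\bigl(Z_k(1)\bigr)\ \leq\ Cn\sum_{k>\frac{1}{\alpha}\log n}\frac{2^{(1-\alpha)k}}{k}\ \lesssim\ \frac{n^{1/\alpha}}{\log n}\ =\ o\bigl(n^{1/\alpha}\bigr).
\]
So either restate the conclusion in this normalized form or add the displayed line; as written, the reduction ``for the present purpose $S_n^{(\mathbf{L})}(f)=\sum_{k}\sum_{j}f_k\circ T^j$'' is not justified by the decomposition you cite.
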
 
 The proof of Lemma \ref{lem: a>>1 large} is similar  to the proof of Lemma \ref{lem: a<1 Large} and is thus omitted. We next show.
 \begin{lemma}\label{lem: a>>1 phi}
$n^{-1/\alpha}V_n(\varphi)$ converges to $0$ in probability.  
 \end{lemma}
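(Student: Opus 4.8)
The plan is to bound $\|V_n(\varphi)\|_2$ (or an appropriate maximal quantity) using the fact that each $\varphi_k$ is an average of $D_k$ consecutive iterates of $f_k$, which makes $\varphi_k$ a very small function in $L^2$. First I would note that by construction $\varphi_k - \int\varphi_k\,dm$ is a function of the i.i.d.\ block $(f_k\circ T^j)_{0\le j< D_k}$, and since $D_k = 4^{\alpha k}$ while the relevant range of $k$ is $k > \frac{1}{2\alpha}\log(n)$, we have $D_k > n$ for all such $k$ once $n$ is large; hence for a fixed $k$ the functions $\varphi_k, \varphi_k\circ T, \dots, \varphi_k\circ T^{n-1}$ are \emph{not} independent, but $S_n(\varphi_k - \int\varphi_k\,dm)$ can still be controlled by a direct second-moment computation because $\varphi_k$ is an average.

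The key estimate is that $\|\varphi_k - \int \varphi_k\,dm\|_2^2 \le \frac{1}{D_k}\|f_k - \int f_k\,dm\|_2^2 \lesssim \frac{1}{D_k}\cdot\frac{4^{(2-\alpha)k}}{k} = \frac{4^{-\alpha k}\cdot 4^{(2-\alpha)k}}{k} = \frac{4^{(2-2\alpha)k}}{k}$, using that $\varphi_k$ is the Cesàro average over $D_k$ \emph{independent} copies of $f_k$ so its centered $L^2$ norm drops by a factor $D_k$ (here I would invoke the $L^2$ bound $\|f_k\|_2^2 \lesssim 4^{(2-\alpha)k}/k$ coming from Proposition \ref{prop: second moment for a>=1} and $Z_k(1)\le |X_k(1)|\mathbf 1_{[X_k(1)\le 4^k]}$, exactly as used in the proof of Lemma \ref{lem: a>1 LS}). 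Then $\|S_n(\varphi_k - \int\varphi_k\,dm)\|_2^2 \le n\cdot n \cdot\|\varphi_k - \int\varphi_k\,dm\|_2^2$ by Claim \ref{claim: crude variance bound} applied crudely, or better, for the range $k > \frac{1}{2\alpha}\log n$ where $D_k > n$ one can telescope: writing $\varphi_k = \frac1{D_k}\sum_{j=0}^{D_k-1} f_k\circ T^j$, the sum $S_n(\varphi_k - \int f_k\,dm)$ is a telescoping-type average whose $L^2$ norm is $\lesssim \sqrt{n}\,\|f_k - \int f_k\,dm\|_2$ (each increment being a sum of at most $n$ i.i.d.\ centered terms). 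Summing over the relevant range of $k$ and over the tail $k > \frac1\alpha\log n$ as well, one gets $\|V_n(\varphi)\|_2 \lesssim \sqrt{n}\sum_{k > \frac{1}{2\alpha}\log n} \frac{2^{(1-\alpha)k}}{\sqrt k}$ (or a similar geometrically-decaying-in-$k$ bound since $\alpha>1$ makes the exponent negative), so that $\|V_n(\varphi)\|_2 \lesssim \sqrt{n}\cdot n^{(1-\alpha)/(2\alpha)} = n^{1/(2\alpha)}$ up to logarithmic factors, whence $n^{-1/\alpha}\|V_n(\varphi)\|_2 \to 0$ and Chebyshev's inequality finishes the proof.

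The step I expect to be the main obstacle is getting a clean $L^2$ bound on $S_n(\varphi_k - \int\varphi_k\,dm)$ that is uniform in $k$ in the relevant range and, crucially, exploits the averaging to beat the naive $\|S_n(\psi)\|_2 \le n\|\psi\|_\infty$ bound — one must be careful that although $\varphi_k\circ T^j$ for $0\le j < n$ are dependent, the underlying randomness is the single i.i.d.\ block $(f_k\circ T^i)_{0\le i < D_k + n}$, and $S_n(\varphi_k - c)$ is a fixed linear functional of these i.i.d.\ variables with small coefficients (each coefficient is at most $n/D_k$ in absolute value and there are at most $D_k + n$ of them), giving $\|S_n(\varphi_k - c)\|_2^2 \lesssim (D_k + n)\cdot (n/D_k)^2 \|f_k - c\|_2^2 \lesssim \frac{n^2}{D_k}\|f_k - c\|_2^2$ when $D_k > n$. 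Once this coefficient bookkeeping is done, summing the resulting geometric-in-$k$ series (convergent because $\alpha > 1$) and comparing with $n^{1/\alpha}$ is routine, and the passage from the $L^2$ bound to convergence in probability is immediate via Markov/Chebyshev. If a maximal statement over $t\in[0,1]$ were needed one would additionally invoke \cite[Theorem 10.2]{MR1700749} as in the proof of Lemma \ref{lem: a>1 VS}, but for the present lemma plain convergence in probability of $n^{-1/\alpha}V_n(\varphi)$ suffices.
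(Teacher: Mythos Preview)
Your approach is essentially the paper's: express $S_n(\varphi_k)$ as a linear combination of the i.i.d.\ block $(f_k\circ T^i)_{0\le i<D_k+n}$ with coefficients bounded by $n/D_k$, deduce $\mathrm{Var}(S_n(\varphi_k))\lesssim \frac{n^2}{D_k}\mathrm{Var}(f_k)\lesssim n^2\cdot\frac{4^{(2-2\alpha)k}}{k}$, sum over $k>\frac{1}{2\alpha}\log n$ (the paper uses independence across $k$ to write $\mathrm{Var}(V_n(\varphi))=\sum_k\mathrm{Var}(S_n(\varphi_k))$; the triangle inequality also works since the series is geometric), and conclude via Chebyshev that $\mathrm{Var}(V_n(\varphi))\lesssim n^{2/\alpha}/\log n$.

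One caveat: your intermediate displayed bounds are off. The cruder estimate $\|S_n(\varphi_k-c)\|_2\lesssim\sqrt{n}\,\|f_k-c\|_2$ is \emph{not} sufficient on its own, because $\|f_k-c\|_2\lesssim 2^{(2-\alpha)k}/\sqrt{k}$ (not $2^{(1-\alpha)k}/\sqrt{k}$ as you wrote), and $\sum_{k>\frac{1}{2\alpha}\log n}2^{(2-\alpha)k}/\sqrt{k}$ diverges since $\alpha<2$. You must use the sharper bound $\|S_n(\varphi_k-c)\|_2^2\lesssim\frac{n^2}{D_k}\|f_k-c\|_2^2$ that you correctly derive in your final paragraph; with it the series becomes $\sum_k 4^{(2-2\alpha)k}/k$, which converges because $\alpha>1$, and the correct order is $\|V_n(\varphi)\|_2\lesssim n^{1/\alpha}/\sqrt{\log n}$ rather than $n^{1/(2\alpha)}$.
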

  
The proof of Lemma \ref{lem: a>>1 phi} begins with the following easy calculation.
\begin{fact}
If $n\leq D_k$ then
\begin{equation}\label{eq: sum phi n<d}
S_n\left(\varphi_k\right)=
\sum_{j=0}^{n-2}\frac{j+1}{D_k}f_k\circ T^j+\frac{n}{D_k}\sum_{j=n-1}^{D_k-1}f_k\circ T^j+\sum_{j=1}^{n-1}\frac{n-j}{D_k}f_k\circ T^{D_k+j-1}
\end{equation}
If $D_k\leq n$ then
\begin{equation}\label{eq: sum phi n>d}
	S_n\left(\varphi_k\right)=
	\sum_{j=0}^{D_k-2}\frac{j+1}{D_k}f_k\circ T^j+\sum_{j=D_k-1}^{n-1}f_k\circ T^j+\sum_{j=1}^{D_k-1}\frac{D_k-j}{D_k}f_k\circ T^{n+j-1}
\end{equation}
\end{fact}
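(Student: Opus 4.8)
The plan is to treat this as a pure bookkeeping identity: expand $S_n(\varphi_k)$ from the definition $\varphi_k=D_k^{-1}\sum_{j=0}^{D_k-1}f_k\circ T^j$ and then re-index the resulting double sum by the total exponent of $T$ that occurs. Explicitly,
\[
S_n(\varphi_k)=\sum_{i=0}^{n-1}\varphi_k\circ T^i=\frac{1}{D_k}\sum_{i=0}^{n-1}\sum_{j=0}^{D_k-1}f_k\circ T^{i+j}=\frac{1}{D_k}\sum_{m=0}^{n+D_k-2}c(m)\,f_k\circ T^{m},
\]
where $c(m):=\#\{(i,j):0\le i\le n-1,\ 0\le j\le D_k-1,\ i+j=m\}$. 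Since for fixed $m$ the value $j=m-i$ is determined by $i$, we have $c(m)=\#\{i:\max(0,m-D_k+1)\le i\le\min(n-1,m)\}=\min(n-1,m)-\max(0,m-D_k+1)+1$, so the whole statement reduces to computing this piecewise-linear function of $m$ and matching it, term by term, against the three sums in each display.

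First I would handle the case $n\le D_k$. Here the range of $m$ splits into a rising part $0\le m\le n-1$ where $c(m)=m+1$, a plateau $n-1\le m\le D_k-1$ where $c(m)=n$, and a falling part $D_k\le m\le n+D_k-2$ where $c(m)=n+D_k-1-m$; substituting $m=D_k+j-1$ in the last range converts $c(m)$ into $n-j$ and $f_k\circ T^m$ into $f_k\circ T^{D_k+j-1}$. Plugging these into the reorganized sum and dividing by $D_k$ produces exactly \eqref{eq: sum phi n<d}, where the plateau is written as $\sum_{j=n-1}^{D_k-1}$ precisely so that the overlap value $m=n-1$ (where both the rising and plateau descriptions give $c=n$) is counted once. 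The case $D_k\le n$ is the mirror image: $c(m)=m+1$ for $0\le m\le D_k-1$, $c(m)=D_k$ for $D_k-1\le m\le n-1$, and $c(m)=n+D_k-1-m$ for $n\le m\le n+D_k-2$, the last becoming $D_k-j$ under $m=n+j-1$; this gives \eqref{eq: sum phi n>d}.

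There is no real obstacle here; the only thing to be careful about is the interval arithmetic for $c(m)$ — getting the $\max(0,\cdot)$ versus $\min(n-1,\cdot)$ cutoffs right and making sure the shared endpoints of the three regimes (namely $m=n-1$ in the first case and $m=D_k-1$ in the second) are assigned to exactly one of the three sums, which is exactly what the half-open index ranges in the statement encode.
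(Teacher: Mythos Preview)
Your proof is correct and is exactly the direct double-sum re-indexing one would expect; the paper itself omits the argument entirely, calling it an ``easy calculation'', so there is nothing to compare against. Your counting of $c(m)$ and the splitting into rising/plateau/falling ranges (with the boundary indices $m=n-1$ resp.\ $m=D_k-1$ assigned to the plateau) matches the index ranges in \eqref{eq: sum phi n<d} and \eqref{eq: sum phi n>d} precisely.
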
	
Since $f_k$ and $Z_k(1)$ are equally distributed and $Z_k(1)\leq Y_k(1)$, the next claim follows easily from Proposition \ref{prop: second moment for a>=1}.
\begin{claim}\label{clm: varf}
	For every $k\in\N$, 
\[
Var(f_k)\leq E\left(Y_k(1)^2\right)\leq C\frac{4^{(2-\alpha)k}}{k}.
\]
\end{claim}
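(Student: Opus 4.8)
The plan is to reduce the claim to Proposition \ref{prop: second moment for a>=1} by two elementary monotonicity steps; this mirrors the computation already carried out inside the proof of Lemma \ref{lem: a>1 LS}. First I would observe that $f_k$ is equally distributed to $Z_k(1)$, and dropping the centering only increases the second moment, so $Var(f_k)=Var(Z_k(1))\le E(Z_k(1)^2)$. By Fact \ref{fact:1} the discretisation satisfies $0\le Z_k(1)\le Y_k(1)$ pointwise, hence $E(Z_k(1)^2)\le E(Y_k(1)^2)$, which already yields the first inequality in the statement. Note there are no integrability concerns here, since $Z_k(1)$ and $Y_k(1)$ are bounded by $4^k$.

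For the second inequality, recall $Y_k(1)=X_k(1)\mathbf{1}_{[2^k\le X_k(1)\le 4^k]}$, so $Y_k(1)^2\le X_k(1)^2\mathbf{1}_{[0\le X_k(1)\le 4^k]}$ and therefore $E(Y_k(1)^2)\le E\big(X_k(1)^2\mathbf{1}_{[X_k(1)\le 4^k]}\big)$. Since $X_k(1)$ is $S_\alpha(\sigma_k,1,0)$ with $\sigma_k^\alpha=1/k$ and $\alpha\in(1,2)$, Proposition \ref{prop: second moment for a>=1} applied with truncation level $4^k$ bounds this last quantity by a constant multiple of $\sigma_k^\alpha(4^k)^{2-\alpha}=\frac{4^{(2-\alpha)k}}{k}$, which is exactly the asserted bound, with $C$ the (absolute) constant produced by that proposition.

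There is essentially no obstacle: the only point requiring a word is that Proposition \ref{prop: second moment for a>=1} is phrased for the one-sided truncation $[X_k\le\,\cdot\,]$, which is why I first enlarge the event $[2^k\le X_k(1)\le 4^k]$ to $[X_k(1)\le 4^k]$ before invoking it; discarding the lower truncation at $2^k$ is harmless since it only decreases the expectation. I would present the whole argument as a short chain of inequalities followed by a single citation of Proposition \ref{prop: second moment for a>=1}.
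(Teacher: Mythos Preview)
Your proposal is correct and follows exactly the approach the paper intends: the paper's proof is the single sentence ``Since $f_k$ and $Z_k(1)$ are equally distributed and $Z_k(1)\leq Y_k(1)$, the next claim follows easily from Proposition \ref{prop: second moment for a>=1},'' and you have simply unpacked those steps (equality in distribution, $\mathrm{Var}\le$ second moment, pointwise domination $Z_k(1)\le Y_k(1)$, enlarging the truncation window, then invoking the proposition with $K=4^k$). There is no substantive difference.
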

Using \eqref{eq: sum phi n<d} and this claim we obtain.
\begin{lemma}
$\mathrm{Var}\left(V_n(\varphi)\right)\lesssim \frac{n^{2/\alpha}}{\log(n)}$
\end{lemma}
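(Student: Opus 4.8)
The plan is to estimate $\mathrm{Var}\left(V_n(\varphi)\right)$ term by term over $k$, splitting the range of summation according to whether $n\leq D_k$ or $D_k\leq n$, and using the representations \eqref{eq: sum phi n<d} and \eqref{eq: sum phi n>d} together with the variance bound of Claim \ref{clm: varf}. First I would note that $V_n(\varphi)=\sum_{k>\frac{1}{2\alpha}\log(n)}S_n(\varphi_k-\int\varphi_k dm)$, and that the summands are \emph{not} independent across $k$ (the shifts $T^{D_k+j}$ range over overlapping index sets for different $k$), so I expect to use Claim \ref{claim: crude variance bound} to reduce to bounding $\sum_k \mathrm{Var}\left(S_n(\varphi_k)\right)$, paying a factor equal to the number of relevant $k$'s, which is $O(\log n)$. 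Actually, a cleaner route is to observe that for each fixed $k$, the random variables $f_k\circ T^j$ appearing in \eqref{eq: sum phi n<d} or \eqref{eq: sum phi n>d} are i.i.d.\ (the shifts are all distinct integers and $f_k\circ T^j$ is a stationary i.i.d.\ sequence in $j$), so $\mathrm{Var}(S_n(\varphi_k))$ equals $\mathrm{Var}(f_k)$ times the sum of squares of the coefficients appearing there.

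The key computation is then this: when $n\leq D_k$, the coefficients in \eqref{eq: sum phi n<d} are $\frac{j+1}{D_k}$ for $0\leq j\leq n-2$, the constant $\frac{n}{D_k}$ repeated $D_k-n+1$ times, and $\frac{n-j}{D_k}$ for $1\leq j\leq n-1$; the sum of their squares is bounded by $\frac{1}{D_k^2}\left(2\sum_{j=1}^{n}j^2 + (D_k-n+1)n^2\right)\lesssim \frac{n^3}{D_k^2}+\frac{n^2}{D_k}\lesssim \frac{n^2}{D_k}$ since $n\leq D_k$. Hence for these $k$,
\[
\mathrm{Var}\left(S_n(\varphi_k)\right)\lesssim \frac{n^2}{D_k}\cdot\frac{4^{(2-\alpha)k}}{k}=\frac{n^2}{k}4^{(2-\alpha)k-\alpha k}=\frac{n^2}{k}4^{(2-2\alpha)k}.
\]
For $k$ with $D_k\leq n$, i.e.\ $4^{\alpha k}\leq n$, the middle block in \eqref{eq: sum phi n>d} contributes $n-D_k+1\leq n$ coefficients equal to $1$ and the boundary sums contribute sums of squares $\lesssim D_k$, so $\mathrm{Var}(S_n(\varphi_k))\lesssim (n+D_k)\frac{4^{(2-\alpha)k}}{k}\lesssim n\frac{4^{(2-\alpha)k}}{k}$; summing this over $k\leq \frac{1}{2\alpha}\log n$ (which forces $D_k=4^{\alpha k}\leq \sqrt n$, hence these $k$ satisfy $4^{(2-\alpha)k}\lesssim n^{(2-\alpha)/(2\alpha)}$) gives a geometric-type sum dominated by its largest term, of order $n\cdot n^{(2-\alpha)/(2\alpha)}=n^{1/2+1/\alpha}\ll n^{2/\alpha}$ since $\alpha<2$. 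For the range $k>\frac{1}{2\alpha}\log n$ with $n\leq D_k$, I sum $\frac{n^2}{k}4^{(2-2\alpha)k}$ over $k\geq \frac{1}{2\alpha}\log n$; since $2-2\alpha<0$ this is again a geometric sum whose leading term is $\frac{n^2}{\log n}4^{(2-2\alpha)\cdot\frac{1}{2\alpha}\log n}=\frac{n^2}{\log n}\cdot n^{(1-\alpha)/\alpha}=\frac{1}{\log n}n^{2+1/\alpha-1}=\frac{n^{1+1/\alpha}}{\log n}$, which is $\ll \frac{n^{2/\alpha}}{\log n}$ precisely because $\alpha<2$ forces $1+\tfrac1\alpha<\tfrac2\alpha$. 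Finally I would collect these three contributions and note they are all $\lesssim \frac{n^{2/\alpha}}{\log n}$ (the dominant one being the boundary range $k\asymp\frac{1}{2\alpha}\log n$), using the factor from Claim \ref{claim: crude variance bound} if cross-terms are handled that way — and here one must check that the $O(\log n)$ penalty from Claim \ref{claim: crude variance bound} does not destroy the bound, which it does not since each individual term already carries a spare power of $n$.

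The main obstacle I anticipate is the bookkeeping at the transition scale $k\approx \frac{1}{2\alpha}\log n$, where $D_k\approx \sqrt n$ and neither $n\leq D_k$ nor $D_k\leq n$ is comfortably dominant; this is exactly the range that saturates the claimed bound $\frac{n^{2/\alpha}}{\log n}$, so the constants and the exponent arithmetic must be tracked carefully there, and one has to make sure the "crude" variance bound of Claim \ref{claim: crude variance bound} (if used to handle the lack of independence in $k$) is applied only after the per-$k$ estimate already has room to spare. A secondary point is to confirm that for fixed $k$ the shifts $T^{j}$ and $T^{D_k+j-1}$ in \eqref{eq: sum phi n<d} never coincide when $n\leq D_k$ (they don't, since the first family has indices in $[0,n-1]$ and the second in $[D_k,D_k+n-2]$, disjoint as $n\leq D_k$), so that the sum-of-squares-of-coefficients formula for the variance is legitimate; the analogous disjointness in \eqref{eq: sum phi n>d} needs $D_k\leq n$, which holds in that case.
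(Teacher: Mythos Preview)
There is a genuine gap, and it stems from two compounding errors.

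First, the range in $V_n(\varphi)$ is $k>\frac{1}{2\alpha}\log n$, and for every such $k$ one has $D_k=4^{\alpha k}\geq 4^{\frac12\log n}=n$. So only the representation \eqref{eq: sum phi n<d} is ever relevant; your entire discussion of the case $D_k\leq n$ concerns indices that do not appear in the sum.

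Second, and more importantly, you assert that the summands $S_n(\varphi_k)$ are not independent across $k$ and therefore plan to absorb an $O(\log n)$ penalty from Claim~\ref{claim: crude variance bound}. In fact they \emph{are} independent: by Corollary~\ref{cor: weak Sinai} the full array $\{f_k\circ T^{j-1}:k\in\N,\ 1\le j\le 4^{k^2}\}$ is jointly independent, and for $k>\frac{1}{2\alpha}\log n$ one checks $n+D_k\leq 4^{k^2}$, so all the shifts occurring in \eqref{eq: sum phi n<d} lie in this range. The paper uses exactly this: $\mathrm{Var}(V_n(\varphi))=\sum_k\mathrm{Var}(S_n(\varphi_k))$, with no penalty.

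This matters because your arithmetic in the final step is wrong in a way that hides the issue. One has $4^{(2-2\alpha)\cdot\frac{1}{2\alpha}\log n}=n^{(2-2\alpha)/\alpha}$, not $n^{(1-\alpha)/\alpha}$; hence $\sum_{k>\frac{1}{2\alpha}\log n}\frac{n^2}{k}4^{(2-2\alpha)k}\asymp \frac{n^{2/\alpha}}{\log n}$, which \emph{saturates} the claimed bound rather than leaving a spare power of $n$. Your assertion that $1+\tfrac{1}{\alpha}<\tfrac{2}{\alpha}$ ``precisely because $\alpha<2$'' is simply false for $\alpha\in(1,2)$ (it is equivalent to $\alpha<1$). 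Consequently there is no room to absorb an extra $\log n$ factor, and the crude-bound route as you describe it would yield only $\mathrm{Var}(V_n(\varphi))\lesssim n^{2/\alpha}$, which is insufficient. The independence across $k$ is the missing ingredient.
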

\begin{proof}
For all $k\geq \frac{1}{2\alpha}\log(n)$, $D_k\geq n$ and (up to finitely many $n$) $D_k\leq 4^{k^2}$. Since $\left\{f_k\circ T^j:0\leq j<2\cdot 4^{k^2}\right\}$ is equally distributed as $\left\{Z_k(j):j\leq 1\leq j\leq 2\cdot 4^{k^2}\right\}$, 	we deduce from \eqref{eq: sum phi n<d} and the fact that the $f_k$ are the functions from Corollary \ref{cor: weak Sinai} that;
\begin{itemize}
	\item[(a)] for all $k\geq \frac{1}{2\alpha}\log(n)$, $S_n\left(\varphi_k\right)$ is a sum of independent random variables.  
	\item[(b)]  $S_n\left(\varphi_k\right),\ k\geq \frac{1}{2\alpha}\log(n)$ are independent. 
\end{itemize}

By item (a), 
\begin{align*}
\mathrm{Var}\left(S_n\left(\varphi_k\right)\right)&=\mathrm{Var}(f_k)\left(\sum_{j=0}^{n-2}\frac{(j+1)^2}{D_k^2}+\frac{n^2}{D_k^2}(D_k-n+1)+\sum_{j=1}^{n-1}\frac{(n-j)^2}{D_k^2}\right)\\
&\leq \frac{3n^2}{D_k}\mathrm{Var}(f_k),\ \ \text{as }\ D_k\geq n\\
&\leq 3C n^2\cdot \frac{4^{(2-2\alpha)k}}{k}.
\end{align*}
Here the last inequality follows from Claim \ref{clm: varf} and $\frac{4^{(2-\alpha)k}}{D_k}=4^{(2-2\alpha)k}$. 

Finally by item (b), 
\begin{align*}
\mathrm{Var}\left(V_n(\varphi)\right)&=\sum_{k=\frac{1}{2\alpha}\log(n)+1}^\infty	\mathrm{Var}\left(S_n\left(\varphi_k\right)\right)\\
&\leq 3Cn^2\sum_{k=\frac{1}{2\alpha}\log(n)+1}^\infty \frac{4^{(2-2\alpha)k}}{k}\lesssim \frac{n^{2/\alpha}}{\log(n)}.	
\end{align*}	
\end{proof}
Applying Markov's inequality we obtain. 
\begin{corollary}\label{cor: a>>1 phi}
$\frac{V_n(\varphi)}{n^{1/\alpha}}\xrightarrow[n\to\infty]{}0$ in probability. 
\end{corollary}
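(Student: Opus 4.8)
The plan is to read off the statement directly from the variance bound of the preceding lemma via Chebyshev's inequality. First I would record that $V_n(\varphi)$ is a centered, square-integrable random variable. Square-integrability (indeed the variance bound $\mathrm{Var}(V_n(\varphi)) \lesssim n^{2/\alpha}/\log(n)$) is exactly the content of the lemma just proved. Centering is immediate: each block $S_n\!\left(\varphi_k - \int \varphi_k\, dm\right)$ has zero mean because $T$ preserves $m$, and the defining series $V_n(\varphi) = S_n\!\left(\sum_{k > \frac{1}{2\alpha}\log(n)} \left(\varphi_k - \int \varphi_k\, dm\right)\right)$ converges in $L^1(m)$ — this is the $L^1$ convergence of $\sum_k \varphi_k$ noted just after \eqref{eq: decomp for a>1}, which parallels Lemma \ref{lem: h is well defined a>1} — so the $L^1$-limit $V_n(\varphi)$ is again centered.

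Next, fix $\varepsilon > 0$. Since $\E(V_n(\varphi)) = 0$, Chebyshev's inequality together with the variance bound gives
\[
m\!\left(\frac{|V_n(\varphi)|}{n^{1/\alpha}} > \varepsilon\right) = m\!\left(|V_n(\varphi)| > \varepsilon\, n^{1/\alpha}\right) \leq \frac{\mathrm{Var}(V_n(\varphi))}{\varepsilon^2\, n^{2/\alpha}} \lesssim \frac{1}{\varepsilon^2 \log(n)} \xrightarrow[n\to\infty]{} 0 .
\]
As $\varepsilon > 0$ was arbitrary, this is precisely the assertion that $n^{-1/\alpha} V_n(\varphi) \to 0$ in probability.

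There is no real obstacle left at this stage: all the work has been front-loaded into the variance estimate, whose proof used (i) the independence of the blocks $S_n(\varphi_k)$ for $k > \frac{1}{2\alpha}\log(n)$ so that variances add, (ii) the crude bound $\mathrm{Var}(S_n(\varphi_k)) \leq 3 n^2 \mathrm{Var}(f_k)/D_k$ coming from the explicit formula \eqref{eq: sum phi n<d} and $D_k \geq n$, and (iii) the gain $\mathrm{Var}(f_k) \lesssim 4^{(2-\alpha)k}/k$ from Claim \ref{clm: varf} combined with the averaging denominator $D_k = 4^{\alpha k}$, which produces the summable factor $4^{(2-2\alpha)k}/k$. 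The only point meriting an explicit sentence in the write-up is the centering of $V_n(\varphi)$, and that follows once the defining series is known to converge in $L^1$.
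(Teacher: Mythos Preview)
Your argument is correct and matches the paper's approach exactly: the paper simply states ``Applying Markov's inequality we obtain'' and records the corollary, relying on the variance bound $\mathrm{Var}(V_n(\varphi)) \lesssim n^{2/\alpha}/\log(n)$ from the preceding lemma. Your additional remark about centering of $V_n(\varphi)$ is a helpful clarification that the paper leaves implicit.
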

We now turn to show that $	n^{-1/\alpha}S_n^{(\mathbf{S})}(h)$ tends to $0$ in probability. The first step is the following simple claim. Recall the notation, $F_k=f_k-\int f_kdm$. 
\begin{claim}\label{clm: a step for small a>1}
For every $k\leq \frac{1}{2\alpha}\log(n)$, 
\[
S_n(h_k)=\sum_{j=0}^{D_k-2}\left(\frac{D_k-j-1}{D_k}\right)F_k\circ T^j-U^n\left(\sum_{j=0}^{D_k-2}\left(\frac{D_k-j-1}{D_k}\right)F_k\circ T^j\right).
\]
\end{claim}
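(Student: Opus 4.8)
The plan is to exhibit $h_k$ as a genuine coboundary with an explicit transfer function and then telescope. Set
\[
G_k:=\sum_{j=0}^{D_k-2}\left(\frac{D_k-j-1}{D_k}\right)F_k\circ T^j .
\]
Once we know that $h_k=G_k-G_k\circ T$, the claimed identity follows at once: since $h_k\circ T^i=G_k\circ T^i-G_k\circ T^{i+1}$, the sum $S_n(h_k)=\sum_{i=0}^{n-1}h_k\circ T^i$ telescopes to $G_k-G_k\circ T^n=G_k-U^nG_k$, which is precisely the asserted formula.

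So the work reduces to checking $h_k=G_k-G_k\circ T$. First I would rewrite $h_k$ in terms of $F_k$: because $T$ preserves $m$ we have $\int\varphi_k\,dm=\int f_k\,dm$, so subtracting this common constant gives
\[
h_k=f_k-\varphi_k=F_k-\frac{1}{D_k}\sum_{j=0}^{D_k-1}F_k\circ T^j .
\]
Next I would compute $G_k-G_k\circ T$ by an index shift, writing $G_k\circ T=\sum_{i=1}^{D_k-1}\frac{D_k-i}{D_k}F_k\circ T^i$, and then collect the coefficient of each $F_k\circ T^j$: the index $j=0$ occurs only in $G_k$, with coefficient $\frac{D_k-1}{D_k}$; each $j$ with $1\le j\le D_k-2$ gets $\frac{D_k-j-1}{D_k}-\frac{D_k-j}{D_k}=-\frac{1}{D_k}$; and $j=D_k-1$ occurs only in $G_k\circ T$, with coefficient $-\frac{1}{D_k}$. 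Summing and using $\frac{D_k-1}{D_k}F_k=F_k-\frac{1}{D_k}F_k$ produces exactly $F_k-\frac{1}{D_k}\sum_{j=0}^{D_k-1}F_k\circ T^j$, which is $h_k$.

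There is no real obstacle here: the statement is a purely algebraic identity holding for every $k$ and $n$, and the hypothesis $k\le\frac{1}{2\alpha}\log n$ merely records the range in which the claim will later be applied. The only thing requiring a little care is the bookkeeping at the two boundary indices $j=0$ and $j=D_k-1$ in the index shift defining $G_k\circ T$. It is worth remarking that all the functions in sight are finite $m$-a.e. and square integrable (by Claim \ref{clm: varf}), so every manipulation above is legitimate in $L^2(m)$.
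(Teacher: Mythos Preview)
Your proof is correct. It differs in presentation from the paper's argument: the paper invokes the precomputed formula \eqref{eq: sum phi n>d} for $S_n(\varphi_k)$ in the regime $D_k\le n$ (this is where the hypothesis $k\le\frac{1}{2\alpha}\log n$ enters), subtracts it from $S_n(f_k)$, and then passes from $f_k$ to $F_k$ by noting that the constant shifts cancel. You instead exhibit $h_k$ directly as a coboundary $h_k=G_k-G_k\circ T$ with transfer function $G_k$ and telescope. The two computations are of course the same arithmetic rearranged, but your route is a bit more conceptual and, as you observe, makes it transparent that the identity holds for every $k$ and $n$, the hypothesis on $k$ being irrelevant to the algebra.
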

\begin{proof}
As $D_k\leq n$ and $h_k=f_k-\varphi_k$, it follows from \eqref{eq: sum phi n>d} that
\begin{align*}
S_n(h_k)&=\sum_{j=0}^{D_k-2}\left(\frac{D_k-j-1}{D_k}\right)f_k\circ T^j-U^n\left(\sum_{j=0}^{D_k-2}\left(\frac{D_k-j-1}{D_k}\right)f_k\circ T^j\right)\\
&=\sum_{j=0}^{D_k-2}\left(\frac{D_k-j-1}{D_k}\right)F_k\circ T^j-U^n\left(\sum_{j=0}^{D_k-2}\left(\frac{D_k-j-1}{D_k}\right)F_k\circ T^j\right)
\end{align*}
\end{proof}
\begin{lemma}\label{lem: a>1 small}
$	n^{-1/\alpha}S_n^{(\mathbf{S})}(h)$ tends to $0$ in probability.
\end{lemma}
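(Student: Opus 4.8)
The plan is to leverage Claim \ref{clm: a step for small a>1}, which exhibits $S_n(h_k)$ as a \emph{difference} $G_k - G_k\circ T^n$, where $G_k := \sum_{j=0}^{D_k-2}\left(\tfrac{D_k-j-1}{D_k}\right)F_k\circ T^j$ is supported on the ``window'' of length $D_k=4^{\alpha k}$. Summing over $k\le \tfrac{1}{2\alpha}\log(n)$ gives $S_n^{(\mathbf S)}(h) = G^{(n)} - G^{(n)}\circ T^n$ with $G^{(n)} := \sum_{k=1}^{\frac{1}{2\alpha}\log(n)} G_k$, so that $\|n^{-1/\alpha}S_n^{(\mathbf S)}(h)\|_1 \le 2 n^{-1/\alpha}\|G^{(n)}\|_1 \le 2 n^{-1/\alpha}\|G^{(n)}\|_2$, and it suffices to bound $\|G^{(n)}\|_2$. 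First I would use Claim \ref{claim: crude variance bound} to reduce to $\|G^{(n)}\|_2^2 \le \tfrac{1}{2\alpha}\log(n)\sum_{k=1}^{\frac{1}{2\alpha}\log(n)}\|G_k\|_2^2$.

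Next I would estimate each $\|G_k\|_2^2$. Since $G_k$ is an explicit weighted sum of the i.i.d.\ translates $F_k, F_k\circ T, \ldots, F_k\circ T^{D_k-2}$ — these are genuinely independent because $D_k - 1 < 2\cdot 4^{k^2}$ (the range in which Corollary \ref{cor: weak Sinai} guarantees independence for our fixed $k\le \tfrac{1}{2\alpha}\log(n)$) — the cross terms vanish and
\[
\|G_k\|_2^2 = \operatorname{Var}(f_k)\sum_{j=0}^{D_k-2}\left(\frac{D_k-j-1}{D_k}\right)^2 \le D_k\operatorname{Var}(f_k).
\]
By Claim \ref{clm: varf}, $\operatorname{Var}(f_k)\le C\,4^{(2-\alpha)k}/k$, hence $\|G_k\|_2^2 \le C\,D_k\,4^{(2-\alpha)k}/k = C\,4^{(2+\alpha)k}/k$. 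Summing the geometric-type series, $\sum_{k=1}^{\frac{1}{2\alpha}\log(n)} 4^{(2+\alpha)k}/k \lesssim 4^{(2+\alpha)\cdot\frac{1}{2\alpha}\log(n)}/\log(n) = n^{(2+\alpha)/(2\alpha)}/\log(n) = n^{1/\alpha+1/2}/\log(n)$, so $\|G^{(n)}\|_2^2 \lesssim n^{1/\alpha+1/2}$ and thus $n^{-1/\alpha}\|G^{(n)}\|_2 \lesssim n^{1/4 - 1/(2\alpha)}$.

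This exponent $\tfrac14 - \tfrac{1}{2\alpha}$ is negative precisely when $\alpha < 2$, which is our standing assumption, so $n^{-1/\alpha}\|G^{(n)}\|_2 \to 0$, and by Markov's inequality $n^{-1/\alpha}S_n^{(\mathbf S)}(h)\to 0$ in probability, completing the proof. I expect the only delicate point to be verifying the independence used to kill the cross terms in $\|G_k\|_2^2$: one must check that for $k\le\tfrac1{2\alpha}\log(n)$ the window $D_k = 4^{\alpha k} \le 4^{\sqrt{\log n}} \le n \le 2\cdot 4^{k^2}$ — the last inequality being exactly the range where Corollary \ref{cor: weak Sinai} provides the i.i.d.\ structure of $(f_k\circ T^{j})_j$ — so that all shifts appearing in $G_k$ and the comparison with $\operatorname{Var}(f_k)$ via Proposition \ref{prop: second moment for a>=1} are legitimate. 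Everything else is a routine geometric-series bound; the coboundary representation from Claim \ref{clm: a step for small a>1} is what makes the crude $L^2$ estimate strong enough, exactly as in the treatment of $\W_n^{(\mathbf{VS})}$ in the previous section.
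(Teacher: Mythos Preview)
Your overall strategy matches the paper's exactly: write $S_n^{(\mathbf S)}(h)=G^{(n)}-G^{(n)}\circ T^n$ via Claim~\ref{clm: a step for small a>1} and bound $\|G^{(n)}\|_2$. However, there is an arithmetic slip that masks a real gap. Since $D_k=4^{\alpha k}$, one has
\[
D_k\cdot 4^{(2-\alpha)k}=4^{\alpha k+(2-\alpha)k}=4^{2k},
\]
not $4^{(2+\alpha)k}$. With the correct exponent the geometric sum is
\[
\sum_{k=1}^{\frac{1}{2\alpha}\log n}\frac{4^{2k}}{k}\;\lesssim\;\frac{4^{\frac{1}{\alpha}\log n}}{\log n}=\frac{n^{2/\alpha}}{\log n},
\]
and your crude bound via Claim~\ref{claim: crude variance bound} then yields only
\[
\|G^{(n)}\|_2^2\;\le\;\tfrac{1}{2\alpha}\log(n)\cdot\frac{n^{2/\alpha}}{\log n}\;\lesssim\; n^{2/\alpha},
\]
so $n^{-1/\alpha}\|G^{(n)}\|_2=O(1)$, which does not tend to zero. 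The favourable exponent $\tfrac14-\tfrac1{2\alpha}$ you obtained was an artefact of the miscomputation.

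The repair is exactly the one extra piece of independence you did not invoke: by Corollary~\ref{cor: weak Sinai} the full family $\{f_k\circ T^{j}:k\in\N,\ 0\le j<4^{k^2}\}$ is independent, hence the $G_k$ are independent \emph{across $k$} as well as being mean zero. Therefore $\|G^{(n)}\|_2^2=\sum_k\|G_k\|_2^2\lesssim n^{2/\alpha}/\log n$ without the lossy factor $\log n$, and Chebyshev gives $m(|n^{-1/\alpha}G^{(n)}|>\varepsilon)\lesssim (\varepsilon^2\log n)^{-1}\to 0$. This is precisely what the paper does (it treats the double sum $A_n=G^{(n)}$ directly as a sum of independent mean-zero variables). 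Your chain $D_k\le 4^{\sqrt{\log n}}$ is also not right for $k$ near $\tfrac{1}{2\alpha}\log n$; what you actually need, and what holds, is simply $D_k=4^{\alpha k}\le 4^{k^2}$ for $k\ge 2$, which keeps all the shifts in $G_k$ inside the i.i.d.\ window.
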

\begin{proof}
By Claim \ref{clm: a step for small a>1}, 
\begin{equation}\label{eq: S_n small a>1}
S_n^{(\mathbf{S})}(h)=A_n-U^n\left(A_n\right),
\end{equation}
where 
\[
A_n:=\sum_{k=0}^{\frac{1}{2\alpha}\log(n)}\hspace{0.4mm}\sum_{j=0}^{D_k-2}\left(\frac{D_k-j-1}{D_k}\right)F_k\circ T^j. 
\]
As $A_n$ is a sum of independent random variables,
\begin{align*}
\mathrm{Var}\left(A_n\right)&=\sum_{k=0}^{\frac{1}{2\alpha}\log(n)}\hspace{0.4mm}\sum_{j=0}^{D_k-2}\left(\frac{D_k-j-1}{D_k}\right)^2\mathrm{Var}\left(F_k\circ T^j\right)\\
&\leq \sum_{k=0}^{\frac{1}{2\alpha}\log(n)}D_k\mathrm{Var}\left(F_k\right).
\end{align*}
Noting that for all $k\in\N$, $\mathrm{Var}(F_k)=\mathrm{Var}(f_k)$, we deduce from the last inequality and Claim \ref{clm: varf} that 
\begin{align*}
	\mathrm{Var}\left(A_n\right)&=\sum_{k=0}^{\frac{1}{2\alpha}\log(n)}D_k\mathrm{Var}\left(f_k\right)\\
	&\lesssim \sum_{k=0}^{\frac{1}{2\alpha}\log(n)} \frac{4^{2k}}{k}\lesssim \frac{n^{2/\alpha}}{\log(n)}.
\end{align*}
Next, as $\int A_ndm=0$, it follows from Chebyshev's inequality that for every $\epsilon>0$,
\begin{align*}
m\left(\left|n^{-1/\alpha}A_n\right|>\epsilon\right)&\leq \frac{\mathrm{Var}\left(A_n\right)}{n^{2/\alpha}\epsilon^2}\lesssim \frac{1}{\log(n)\epsilon^2}.	
\end{align*}	
This shows that $n^{-1/\alpha}A_n$ tends to $0$ in probability.

Since $A_n$ and $U^n\left(A_n\right)$ are equally distributed,  $n^{-1/\alpha}U^n\left(A_n\right)$ also tends to $0$ in probability. The claim now follows from the converging together lemma. 
\end{proof}
\begin{proposition}\label{prop: middle a>1}
$S_n^{(\mathbf{M})}(f)$ converges in distribution to an $S_\alpha(\sigma,1,0)$ random variable with $\sigma^\alpha=\ln2$. 
\end{proposition}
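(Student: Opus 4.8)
The plan is to pass to an i.i.d.\ model, replace the discretised truncations $Z_k$ by genuine $\alpha$-stable variables, and then read off the limit from the stability of the $\alpha$-stable laws; the constant $B_n$ is to play the role of the centering that removes the bias of the lower truncation and of subtracting $\sum_k\int f_k\,dm$. Write $I_n:=\{k\in\N:\ \tfrac1{2\alpha}\log n<k\le\tfrac1\alpha\log n\}$. Since $d_k=4^{k^2}>n$ for $k\in I_n$ once $n$ is large, Corollary~\ref{cor: weak Sinai} gives that $\{f_k\circ T^{j-1}:\ k\in I_n,\ 1\le j\le n\}$ is equidistributed with $\{Z_k(j):\ k\in I_n,\ 1\le j\le n\}$, whose columns $V_n(j):=\sum_{k\in I_n}Z_k(j)$, $1\le j\le n$, are i.i.d.; arguing as in the proof of Lemma~\ref{lem: a<1 equal dist}, $\tfrac1{n^{1/\alpha}}S_n^{(\mathbf M)}(f)$ is then distributed as $\tfrac1{n^{1/\alpha}}\big(B_n+\sum_{j=1}^n(V_n(j)-\E V_n(1))\big)$, so the claim reduces to a central limit theorem for this i.i.d.\ triangular array. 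Next I would de-truncate: with $W_n(j):=\sum_{k\in I_n}X_k(j)$ (i.i.d.\ in $j$) and $Z_k-X_k=-(Y_k-Z_k)-X_k\mathbf 1_{[X_k<2^k]}-X_k\mathbf 1_{[X_k>4^k]}$, and using $\E X_k(1)=0$ (valid as $\alpha>1$), one gets
\[
B_n+\sum_{j=1}^n\big(V_n(j)-\E V_n(1)\big)-\sum_{j=1}^nW_n(j)=(\bar R_1-R_1)+(\bar R_2-R_2)+(\bar R_3-R_3),
\]
where $\bar R_i:=\E R_i$ and $R_1,R_2,R_3$ are the sums over $k\in I_n$, $1\le j\le n$, of $Y_k(j)-Z_k(j)$, $X_k(j)\mathbf 1_{[X_k(j)<2^k]}$, $X_k(j)\mathbf 1_{[X_k(j)>4^k]}$ respectively, the identity hinging on $B_n=\bar R_2+o(n^{1/\alpha})$. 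Routine estimates then show: $|R_1-\bar R_1|\le n\sum_{k\in I_n}4^{-k}\lesssim n^{1-1/\alpha}=o(n^{1/\alpha})$ since $\alpha<2$; $R_3=0$ off a set of probability $\lesssim 1/\log n$ by the same tail estimate used in the proof of Lemma~\ref{lem: BCT 1}, while $\bar R_3\lesssim n^{1/\alpha}/\log n$; and, by Proposition~\ref{prop: second moment for a>=1}, $\mathrm{Var}(R_2)=n\sum_{k\in I_n}\mathrm{Var}\big(X_k(1)\mathbf 1_{[X_k(1)<2^k]}\big)\lesssim n\sum_{k\in I_n}2^{(2-\alpha)k}/k\lesssim n^{2/\alpha}/\log n$, so $R_2-\bar R_2=o_{\mathbb P}(n^{1/\alpha})$ by Chebyshev. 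Hence $\tfrac1{n^{1/\alpha}}S_n^{(\mathbf M)}(f)$ has the same distributional limit as $\tfrac1{n^{1/\alpha}}\sum_{j=1}^nW_n(j)$.

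This last limit is computed exactly rather than asymptotically. For each $j$, $W_n(j)$ is a sum of independent $S_\alpha(\sigma_k,1,0)$ variables with $\sigma_k^\alpha=1/k$, hence (by \cite[Properties 1.2.1 and 1.2.3]{MR1280932}, as $\alpha\neq1$) it is $S_\alpha(A_n,1,0)$ with $A_n^\alpha=\sum_{k\in I_n}1/k\to\ln 2$; by the scaling property of the stable laws $\tfrac1{n^{1/\alpha}}\sum_{j=1}^nW_n(j)$ is $S_\alpha(A_n,1,0)$ for every $n$, and since $A_n\to\sqrt[\alpha]{\ln2}$ and characteristic functions depend continuously on the dispersion parameter, it converges in distribution to $S_\alpha(\sqrt[\alpha]{\ln2},1,0)$. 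Together with the previous paragraph and Slutsky's theorem (cf.\ Lemma~\ref{lem: conv together D}) this gives the proposition. Alternatively one may bypass $W_n$ entirely and apply the classical triangular-array criterion for convergence of i.i.d.\ row sums to a stable law directly to $V_n$, verifying $n\,\P(V_n(1)>xn^{1/\alpha})\to\ln2\cdot c_\alpha x^{-\alpha}$ for $x>0$, that the limiting Lévy measure charges no negatives, that the truncated second moment of $V_n(1)/n^{1/\alpha}$ is negligible, and that $B_n$ is the prescribed centering.

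The step I expect to be the real obstacle is the centering bookkeeping: showing that $B_n$ is, up to $o(n^{1/\alpha})$, precisely the centering constant which makes the i.i.d.\ triangular array converge to a \emph{mean-zero} $\alpha$-stable law — equivalently, that $B_n$ cancels the combined bias of $\sum_{k\in I_n}\int f_k\,dm$ and of $\sum_{k,j}X_k(j)\mathbf 1_{[X_k(j)<2^k]}$. This rests on the fine asymptotics of $\E\big(X_k(1)\mathbf 1_{[X_k(1)\le2^k]}\big)$ (cf.\ the Claim following Theorem~\ref{skewed CLT for a>1}) summed over the range $I_n$, and one must be careful about the precise summation endpoints and the accumulation of the various error terms. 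The remainder is a routine reprise of the estimates already carried out in Sections~3 and~4.
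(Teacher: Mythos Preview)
Your approach is the paper's own: pass to the i.i.d.\ array via Corollary~\ref{cor: weak Sinai}, compare $Z_k$ with the genuine stable $X_k$ through the three error terms (discretisation $Y_k-Z_k$, lower cut $X_k\mathbf 1_{[X_k<2^k]}$, upper cut $X_k\mathbf 1_{[X_k>4^k]}$), and read off the limit of the $X$-sum exactly by stability. This is precisely Lemmas~\ref{lem: a>1 equal dist skewed} and~\ref{lem: a>1 middle Z skewed}, and your estimates on $R_1,R_2,R_3$ coincide with the paper's.

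The one place your write-up slips is the centring identity, and it is exactly the obstacle you flag at the end. Since $\alpha>1$ gives $\E X_k(1)=0$, one has $n\,\E V_n(1)=\E\sum_j\bigl(V_n(j)-W_n(j)\bigr)=-(\bar R_1+\bar R_2+\bar R_3)$, and the exact identity reads
\[
B_n+\sum_{j}\bigl(V_n(j)-\E V_n(1)\bigr)-\sum_{j}W_n(j)\;=\;B_n+(\bar R_1-R_1)+(\bar R_2-R_2)+(\bar R_3-R_3),
\]
with an \emph{additional} $B_n$ on the right that the relation ``$B_n=\bar R_2+o(n^{1/\alpha})$'' does not absorb (it is additive, not subtractive). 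This term is not negligible: using $\E X_k=0$ one gets $|B_n|=n\sum_{k\in I_n}\E\bigl(X_k\mathbf 1_{[X_k>2^k]}\bigr)\asymp n^{(\alpha+1)/(2\alpha)}/\log n\gg n^{1/\alpha}$. The paper avoids this double counting by comparing the \emph{uncentred} quantity $S_n^{(\mathbf M)}(Z)+B_n$ directly with $S_n^{(\mathbf M)}(X)$ in Lemma~\ref{lem: a>1 middle Z skewed}(a), so that $B_n$ serves only to centre $R_2$; in your formulation the subtraction of $n\,\E V_n(1)$ already carries $\bar R_2$, and adding $B_n$ on top over-counts it. Your closing worry about how the $-\int f_k\,dm$ built into $S_n^{(\mathbf M)}(f)$ interacts with the explicit $B_n$ is therefore exactly on point, and the displayed identity needs to be redone with this in mind.
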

We postpone the proof of this proposition to Subsection \ref{sub: proof of middle a>1} and we can now prove Theorem \ref{skewed CLT for a>1}. 

\begin{proof}[Proof of Theorem \ref{skewed CLT for a>1}]
We deduce from Lemmas \ref{lem: a>1 small},  \ref{lem: a>>1 large} and \ref{lem: a>>1 phi} that 
\[
S_n^{(\mathbf{S})}(h)+V_n(\varphi)+S_n^{(\mathbf{L})}(f)\xrightarrow[n\to\infty]{}0,\ \ \text{in probability.}
\]
The result now follows from \eqref{eq: decomp for a>1}, Corollary \ref{cor: a>>1 phi}, Proposition \ref{prop: middle a>1} and the converging together lemma. 	
\end{proof}	

\subsection{Proof of Proposition \ref{prop: middle a>1}}\label{sub: proof of middle a>1}
The proof of this proposition goes along similar lines as the proof of Proposition \ref{prop: a<1 middle h} with some (rather) obvious modifications. We first define, 
\begin{align*}
S_n^{(\mathbf{M})}(Z)&:=\sum_{k=\frac{1}{2\alpha}\log(n)+1}^{\frac{1}{\alpha}\log(n)}S_{n}\left(Z_k(\cdot)\right),\\
S_n^{(\mathbf{M})}(Y)&:=\sum_{k=\frac{1}{2\alpha}\log(n)+1}^{\frac{1}{\alpha}\log(n)}S_{n}\left(Y_k(\cdot)\right)\\
S_n^{(\mathbf{M})}(X)&:=\sum_{k=\frac{1}{2\alpha}\log(n)+1}^{\frac{1}{\alpha}\log(n)}S_{n}\left(X_k(\cdot)\right)
\end{align*}	
The following is the analogue of Lemma \ref{lem: a<1 equal dist} for the current case. 
\begin{lemma} \label{lem: a>1 equal dist skewed}
	$S_n^{(\mathbf{M})}(f)$ and $S_n^{(\mathbf{M})}(Z)+B_n$ are equally distributed. 
\end{lemma}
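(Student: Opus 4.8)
The plan is to run the argument of Lemma~\ref{lem: a<1 equal dist} essentially verbatim, adapted from $D[0,1]$-valued processes to the $\R$-valued quantities at hand. I will exhibit two finite families of random variables with the same joint law, one built from the dynamics and one built from the independent triangular array, together with a single deterministic affine functional $\Phi$ whose value on the first family is $S_n^{(\mathbf{M})}(f)$ and whose value on the second is $S_n^{(\mathbf{M})}(Z)+B_n$; since a deterministic map turns equal-in-law inputs into equal-in-law outputs, the lemma follows.

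Concretely, the first ingredient is that, for all $n$ outside a finite set, the finite family $\big(f_k\circ T^{j-1}\big)_{\frac{1}{2\alpha}\log(n)<k\le\frac{1}{\alpha}\log(n),\ 1\le j\le n}$ has the same joint distribution as $\big(Z_k(j)\big)_{\frac{1}{2\alpha}\log(n)<k\le\frac{1}{\alpha}\log(n),\ 1\le j\le n}$. Indeed, by Corollary~\ref{cor: weak Sinai} the entire array $\big(f_k\circ T^{j-1}\big)_{k\in\N,\ 1\le j\le 4^{k^2}}$ is distributed as $\big(Z_k(j)\big)_{k\in\N,\ 1\le j\le 4^{k^2}}$, and as soon as $n\ge 2^{2\alpha^2}$ every $k>\frac{1}{2\alpha}\log(n)$ satisfies $4^{k^2}\ge n$, so the length-$n$ time window $\{1,\dots,n\}$ lies inside the block $\{1,\dots,4^{k^2}\}$ for each middle-range $k$; the finitely many exceptional $n$ are irrelevant for a statement about distributions. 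The second ingredient is the affine functional
\[
\Phi\big((x_k(j))_{k,j}\big):=B_n+\sum_{k=\frac{1}{2\alpha}\log(n)+1}^{\frac{1}{\alpha}\log(n)}\Big(\sum_{j=1}^{n}x_k(j)-n\!\int\! f_k\,dm\Big),
\]
which is continuous. By the definition of $S_n^{(\mathbf{M})}(f)$ one has $\Phi\big((f_k\circ T^{j-1})_{k,j}\big)=B_n+\sum_k S_n\!\big(f_k-\int f_k\,dm\big)=S_n^{(\mathbf{M})}(f)$; and since $\int f_k\,dm=\E(Z_k(1))$, a direct computation identifies $\Phi\big((Z_k(j))_{k,j}\big)$ with $S_n^{(\mathbf{M})}(Z)+B_n$ (matching the global shift $B_n$ and the per-index centring against the definition of $S_n^{(\mathbf{M})}(Z)$). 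Applying $\Phi$ to the two equidistributed families of the previous paragraph and invoking the continuous mapping theorem — equivalently, that the law of $\Phi$ applied to a random vector depends only on the law of that vector — gives $S_n^{(\mathbf{M})}(f)\eqd S_n^{(\mathbf{M})}(Z)+B_n$ for all large $n$, hence in the sense used afterwards.

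I do not expect a genuine obstacle: once the copying lemma is in hand the argument is purely formal and mirrors the one already carried out for Lemma~\ref{lem: a<1 equal dist}. The only two points needing care are (i) checking the inclusion of the length-$n$ time window in the independence block $\{1,\dots,4^{k^2}\}$ for every middle index $k$, which forces the harmless restriction $n\ge 2^{2\alpha^2}$, and (ii) keeping track of the additive constants — the shift $B_n$ and the centring $\int f_k\,dm=\E(Z_k(1))$ — so that $\Phi$ reproduces exactly both $S_n^{(\mathbf{M})}(f)$ and $S_n^{(\mathbf{M})}(Z)+B_n$.
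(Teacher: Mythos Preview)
Your approach is precisely the paper's intended one: mimic Lemma~\ref{lem: a<1 equal dist}, invoke the equidistribution of the finite arrays from Corollary~\ref{cor: weak Sinai}, and push both through a single deterministic affine functional. The check that $n\le 4^{k^2}$ for every middle-range $k$ is exactly the bookkeeping point that needs attention.

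There is, however, a gap at the step you label ``a direct computation''. With your $\Phi$ one gets
\[
\Phi\big((Z_k(j))_{k,j}\big)=B_n+\sum_k\Big(S_n(Z_k)-n\,\E(Z_k(1))\Big)=S_n^{(\mathbf{M})}(Z)+B_n-n\!\!\sum_{k=\frac{1}{2\alpha}\log(n)+1}^{\frac{1}{\alpha}\log(n)}\!\!\E(Z_k(1)),
\]
because $S_n^{(\mathbf{M})}(Z)=\sum_kS_n(Z_k)$ is defined \emph{without} centring. The residual constant is strictly positive (each $Z_k\ge 0$ with $\P(Z_k>0)>0$) and, since $\E(Z_k(1))\asymp 2^{k(1-\alpha)}/k$, one checks it is not $o(n^{1/\alpha})$, so it cannot simply be dropped. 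What your argument actually establishes is $S_n^{(\mathbf{M})}(f)\eqd S_n^{(\mathbf{M})}(Z)+B_n-n\sum_k\E(Z_k(1))$, not the identity as stated. This discrepancy is a slip in the lemma's formulation rather than in your method; either $S_n^{(\mathbf{M})}(Z)$ should be read as the centred sum $\sum_kS_n\big(Z_k-\E(Z_k(1))\big)$, or the extra constant must be carried into Lemma~\ref{lem: a>1 middle Z skewed} and absorbed there.
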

The proof of Lemma \ref{lem: a>1 equal dist skewed} is similar to the proof of Lemma \ref{lem: a<1 equal dist} with obvious modifications. We leave it to the reader. Proposition \ref{prop: middle a>1} follows from Lemma \ref{lem: a>1 equal dist skewed} and the following. 

\begin{lemma}\label{lem: a>1 middle Z skewed}
	$ $
	\begin{itemize}
		\item[(a)] $
		\frac{1}{n^{1/\alpha}}\left(S_n^{(\mathbf{M})}(X)-S_n^{(\mathbf{M})}(Z)-B_n\right)\xrightarrow[n\to\infty]{}0$ in measure. 
		\item[(b)] $\frac{1}{n^{1/\alpha}}S_n^{(\mathbf{M})}(X)$ converges in distribution to an $S_\alpha\left(\sqrt[\alpha]{\ln(2)},1,0\right)$ random variable.
	\end{itemize}
	Consequently  $S_n^{(\mathbf{M})}(Z)+B_n$ converges in distribution to a $\aS\left(\sqrt[\alpha]{\ln(2)}\right)$ random variable
\end{lemma}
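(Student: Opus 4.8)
The plan is to mimic the argument of Lemma \ref{lem: a<1 middle Z}, with the crucial difference that, since $1<\alpha<2$, the truncation $X_k(j)\mapsto Y_k(j)$ now \emph{changes the expectation} by a non-negligible amount, and this drift is precisely what $B_n$ compensates. I would first dispose of part (b): for $n$ large enough each $k$ in the middle range satisfies $4^k>n$, so $V_n(j):=\sum_{k} X_k(j)$, $1\le j\le n$, are i.i.d., and being a finite sum of independent $S_\alpha(\sigma_k,1,0)$ variables with $\alpha\neq 1$, by \cite[Properties 1.2.1 and 1.2.3]{MR1280932} $V_n(j)$ is $S_\alpha(A_n,1,0)$ with $(A_n)^\alpha=\sum_{k=\frac{1}{2\alpha}\log n}^{\frac{1}{\alpha}\log n}\tfrac1k\to\ln 2$. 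Then $\frac{1}{n^{1/\alpha}}S_n^{(\mathbf{M})}(X)=\frac{1}{n^{1/\alpha}}S_n(V_n)$ has exactly the $S_\alpha(A_n,1,0)$ distribution (stability under i.i.d. sums with scaling $n^{1/\alpha}$), and since $A_n\to\sqrt[\alpha]{\ln 2}$ we get the claimed convergence in distribution to an $S_\alpha(\sqrt[\alpha]{\ln 2},1,0)$ random variable.

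For part (a), I would split $X_k(j)-Z_k(j)=(X_k(j)-Y_k(j))+(Y_k(j)-Z_k(j))$ and further decompose $X_k(j)-Y_k(j)=X_k(j)\mathbf 1_{[X_k(j)>4^k]}+X_k(j)\mathbf 1_{[X_k(j)\le 2^k]}=:\widetilde V_k(j)+\widehat V_k(j)$ (using that $X_k(j)\ge 0$ here since $\alpha<2$... actually $X_k(j)$ is skewed but \emph{can} be negative when $\alpha>1$; I should instead write $X_k(j)\mathbf 1_{[X_k(j)\notin[2^k,4^k]]}$ and handle the lower tail $X_k(j)<2^k$, which includes the negative part, via its first moment). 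The grid error $\sum_k S_n(Y_k-Z_k)$ is bounded by $n\sum_k 4^{-k}\lesssim n^{1-1/\alpha}\cdot\!$(something small), so after dividing by $n^{1/\alpha}$ it vanishes. The top-tail term $\sum_k S_n(\widetilde V_k)$ is nonzero only on an event of probability $\le\sum_k\sum_{j=1}^n\P(X_k(j)>4^k)\lesssim n\sum_k \frac{4^{-\alpha k}}{k}\lesssim 1/\log n\to 0$, using the tail bound from (the proof of) Lemma \ref{lem: BCT 1}. The main work is the remaining term $n^{-1/\alpha}\big(\sum_k S_n(\widehat V_k)-B_n\big)$: since $B_n=n\sum_{k}\E(X_k(1)\mathbf 1_{[X_k(1)\le 2^k]})$ is exactly the sum of the expectations of the $\widehat V_k$'s over the $n$ time steps, this is $n^{-1/\alpha}$ times a sum of $n$ i.i.d., \emph{centered} random variables $\sum_k(\widehat V_k(j)-\E\widehat V_k(j))$. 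Here I would invoke Proposition \ref{prop: second moment for a>=1} to get $\E\big((X_k(1)\mathbf 1_{[X_k(1)\le 2^k]})^2\big)\lesssim \frac{2^{(2-\alpha)k}}{k}$, hence the per-step variance is $\lesssim\sum_{k=\frac{1}{2\alpha}\log n}^{\frac{1}{\alpha}\log n}\frac{2^{(2-\alpha)k}}{k}\lesssim \frac{n^{2/\alpha-1}}{\log n}$; Chebyshev (or Kolmogorov's maximal inequality, though here only the endpoint $S_n$ is needed since this term is not being supremized over $t$ — only over the index range of the middle block) then gives $n^{-2/\alpha}\cdot n\cdot\frac{n^{2/\alpha-1}}{\log n}=\frac{1}{\log n}\to 0$.

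Combining, $\frac{1}{n^{1/\alpha}}(S_n^{(\mathbf{M})}(X)-S_n^{(\mathbf{M})}(Z)-B_n)\to 0$ in probability, which with part (b) and the converging-together lemma (Slutsky) yields that $S_n^{(\mathbf{M})}(Z)+B_n$ converges in distribution to an $S_\alpha(\sqrt[\alpha]{\ln 2},1,0)$ random variable; Lemma \ref{lem: a>1 equal dist skewed} then transfers this to $S_n^{(\mathbf{M})}(f)$, proving Proposition \ref{prop: middle a>1}. The main obstacle I anticipate is bookkeeping the truncation levels correctly: one must check that the centering constant $B_n$ matches the expectation of the $[X_k\le 2^k]$-truncated part (not the $[2^k,4^k]$-truncated part $Y_k$), so that the top-tail piece $\widetilde V_k$ and the grid-discretisation piece $Y_k-Z_k$ are genuinely negligible \emph{without} further centering — this works because those pieces are either supported on a rare event or uniformly tiny, so their contribution to the mean is already $o(n^{1/\alpha})$.
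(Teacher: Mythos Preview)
Your proposal is correct and follows essentially the same route as the paper: for (b) you use stability and the fact that $\sum_{k}\sigma_k^\alpha\to\ln 2$, and for (a) you use the same three-term decomposition (grid error $Y_k-Z_k$ bounded deterministically by $n^{1-2/\alpha}$, top-tail $X_k\mathbf 1_{[X_k>4^k]}$ vanishing in probability via a union bound, and centered lower-truncation handled by Chebyshev with the second-moment bound of Proposition~\ref{prop: second moment for a>=1}). Your mid-sentence self-correction about the sign of $X_k$ when $\alpha>1$ is the right observation, and the resolution---treating $X_k\mathbf 1_{[X_k\le 2^k]}$ (negative part included) via its second moment rather than its first---is exactly what the paper does.
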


\begin{proof}[Proof of Lemma \ref{lem: a>1 middle Z skewed}.(b)]
For all $n\in\mathbb{N}$, $S_n^{(\mathbf{M})}(X)$ is a sum of independent totally skewed $\alpha$ stable random variables. By \cite[Property 1.2.1]{MR1280932} $n^{-1/\alpha}S_n(X)$ is $S_\alpha(\Sigma_n,1,0)$ distributed with
\begin{align*}
(\Sigma_n)^\alpha&:=\frac{1}{n}\sum_{k=\frac{1}{2\alpha}\log(n)+1}^{\frac{1}{\alpha}\log(n)}\frac{n}{k}\sim \ln(2),\ \ \text{as}\ n\to\infty. 
\end{align*}
The result follows from the fact that if $B_n$ is $S_\alpha(\Sigma_n,1,0)$ distributed and $\Sigma_n\to \sqrt[\alpha]{\ln(2)}$ then $B_n$ converges to an $S_\alpha(\sqrt[\alpha]{\ln(2)},1,0)$ random variable.
\end{proof}
\begin{proof}[Proof of Lemma \ref{lem: a>1 middle Z skewed}.(a)]
We assume $n$ is large enough so that if $k>\frac{1}{2\alpha}\log(n)$ then $n<4^{k^2}$. 

 Firstly, since for all $k\in\N$ and $j\leq d_k$, 
	\[
	0<Y_k(j)-Z_k(j)\leq 4^{-{k}},
	\]
	we have
	\begin{align*}
	\left|S_n^{(\mathbf{M})}(Y)-S_n^{(\mathbf{M})}(Z)\right|&\leq \sum_{k=\frac{1}{2\alpha}\log(n)+1}^{\frac{1}{\alpha}\log(n)}S_n\left(Y_k(\cdot)-Z_k\left(\cdot\right)\right)\\
	&\leq \sum_{k=\frac{1}{2\alpha}\log(n)+1}^{\frac{1}{\alpha}\log(n)}4^{-k}n\lesssim n^{1-\frac{1}{\alpha}}
	\end{align*}
	Consequently,
	\begin{equation}\label{eq: a>1 skewed S_n(Y)-S_n(Z)}
	\frac{1}{n^{1/\alpha}}\left| S_n^{(\mathbf{M})}(Y)-S_n^{(\mathbf{M})}(Z)\right|\lesssim n^{1-\frac{2}{\alpha}}\xrightarrow[n\to\infty]{}0.
	\end{equation}
	We turn to look at $S_n^{(\mathbf{M})}(X)-S_n^{(\mathbf{M})}(Y)-B_n$. For all $n$, 
	\[
	S_n^{(\mathbf{M})}(X)-\W_n^{(\mathbf{M})}(Y)-B_n=\mathrm{II}_n+\mathrm{III}_n,
	\]
	where 
	\begin{align*}
	\widetilde{V}_k(m)&:=X_k(m)1_{\left[X_k(m)>4^k\right]}\ \ \ \ \text{and}\\ 
	\widehat{V}(m)&:=\sum_{k=\frac{1}{2\alpha}\log(n)+1}^{\frac{1}{\alpha}\log(n)}\left(X_k(m)1_{\left[X_k(m)\leq 2^k\right]}-\mathbb{E}\left(X_k(m)1_{\left[X_k(m)\leq 2^k\right]}\right)\right), 
	\end{align*}
	and
	\begin{align*}
	\mathrm{II}_n&:=n^{-1/\alpha}S_{n}\left(\widehat{V}\right)\\
	\mathrm{III}_n&:=n^{-1/\alpha}\sum_{k=\frac{1}{2\alpha}\log(n)+1}^{\frac{1}{\alpha}\log(n)}S_{n}(\widetilde{V}_k).
	\end{align*}
	Similarly to the proof of Lemma \ref{lem: a<1 Large}, 
	\begin{align*}
	\P\left(\mathrm{III}_n\neq 0\right)&\leq \sum_{k=\frac{1}{2\alpha}\log(n)+1}^{\frac{1}{\alpha}\log(n)}\sum_{j=1}^n\P\left(\widetilde{V}_k(j)\neq 0\right)\\
	&\leq \sum_{k=\frac{1}{2\alpha}\log(n)+1}^{\frac{1}{\alpha}\log(n)}\sum_{j=1}^n\P\left(X_k(m)>4^k\right)\\
	&\lesssim 2Cn\sum_{k=\frac{1}{2\alpha}\log(n)+1}^{\frac{1}{\alpha}\log(n)}\frac{4^{-k\alpha}}{k}\lesssim \frac{1}{\log(n)},
	\end{align*}
	Now $ \widehat{V}(m)$, $1\leq m \leq n$, are zero-mean, independent random variables. By Proposition \ref{prop: second moment for a>=1}, they also have second moment and for all $1\leq j\leq n$, 
	\begin{align}
	\E\left(\widehat{V}(j)^2\right)&\lesssim 2\sum_{k=\frac{1}{2\alpha}\log(n)+1}^{\frac{1}{\alpha}\log(n)}\E\left(X_k(m)^21_{\left[X_k(m)\leq 2^k\right]}\right) \nonumber \\
	&\lesssim  \sum_{k=\frac{1}{2\alpha}\log(n)+1}^{\frac{1}{\alpha}\log(n)}\frac{2^{(2-\alpha)k}}{k}\lesssim \frac{n^{\frac{2}{\alpha}-1}}{\log(n)}.
	\end{align}
	It follows from Markov's inequality that for every $\epsilon>0$,
	\begin{align*}
	\P\left(\big|\mathrm{II}_n\big|>\epsilon\right)
	&\leq \epsilon^{-2}n^{-2/\alpha}\E\left(\left|S_n\left(\widehat{V}\right)\right|^2\right)\\
	&= \epsilon^{-2}n^{-2/\alpha}\left(n\E\left(\widehat{V}(1)^2\right)\right)\lesssim \frac{1}{\epsilon^{2}\log(n)}.
	\end{align*}
	This concludes the proof that 
$\mathrm{II}_n\xrightarrow[n\to\infty]{}0$ in probability. 

	The claim now follows from \eqref{eq: a>1 skewed S_n(Y)-S_n(Z)} and the convergence in probability of $\mathrm{II}_n+\mathrm{III}_n$ to $0$. 
\end{proof}	

\subsection{Deducing Theorem \ref{thm: CLT a>1} from Theorem \ref{skewed CLT for a>1}}
This is similar to the strategy and steps which were carried out in Subsection \ref{sub: Theorem general from skewed}. 

Recall the notation $\hat{\varphi}_k:=\frac{1}{D_k}\sum_{j=0}^{D_k-1}g_k\circ T^j$ and $\hat{h}_k:=g_k-\hat{\varphi}_k$. Since for all $k\in\N$, $\varphi_k$ and $\hat{\varphi}_k$ are equally distributed, by mimicking the proof of Lemma \ref{lem: a>>1 phi} and Corollary \ref{cor: a>>1 phi} we obtain the following. 
 \begin{lemma}\label{lem: a>>1 phi'}
	$n^{-1/\alpha}V_n\left(\hat{\varphi}\right)$ converges to $0$ in probability.  
\end{lemma}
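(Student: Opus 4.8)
The plan is to run the proof of Lemma~\ref{lem: a>>1 phi} and Corollary~\ref{cor: a>>1 phi} essentially verbatim, replacing the pair $(f_k,\varphi_k)$ by $(g_k,\hat\varphi_k)$ throughout. The only structural input used there is the distributional identity of Corollary~\ref{cor: weak Sinai}, and that identity holds for $g_k$ in exactly the same form as for $f_k$: the array $(g_k\circ T^{j-1})_{k\in\N,\,1\le j\le 4^{k^2}}$ has the same law as $(Z_k(j))_{k\in\N,\,1\le j\le 4^{k^2}}$. First I would record the analogue of the Fact preceding \eqref{eq: sum phi n<d}: since $\hat\varphi_k=\frac{1}{D_k}\sum_{j=0}^{D_k-1}g_k\circ T^j$, for $n\le D_k$ the sum $S_n(\hat\varphi_k)$ is given by the right-hand side of \eqref{eq: sum phi n<d} with $f_k$ replaced by $g_k$.

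Next I would do the index bookkeeping. For $k\ge\frac{1}{2\alpha}\log(n)+1$ one has $D_k\ge n$, and for all but finitely many $n$ every such $k$ also satisfies $2D_k=2\cdot4^{\alpha k}\le 4^{k^2}$; hence every translate $g_k\circ T^j$ occurring in $S_n(\hat\varphi_k)$ has index $0\le j\le D_k+n-2<2D_k\le4^{k^2}$, so the joint law of all of these translates, over all $k$ in the range of summation, is governed by the independent array $(Z_k(j))_{1\le j\le4^{k^2}}$. This delivers the analogues of items (a) and (b) in the proof of Lemma~\ref{lem: a>>1 phi}: for each such $k$, $S_n(\hat\varphi_k)$ is a sum of independent random variables, and the family $\{S_n(\hat\varphi_k):k\ge\frac{1}{2\alpha}\log(n)+1\}$ is independent. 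It also gives the variance bound $\mathrm{Var}(g_k)=\mathrm{Var}(Z_k(1))\le C\,4^{(2-\alpha)k}/k$ of Claim~\ref{clm: varf}, which applies equally to $g_k$ since $g_k$ is distributed as $Z_k(1)$.

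With these in place the variance computation is identical to the one establishing $\mathrm{Var}(V_n(\varphi))\lesssim n^{2/\alpha}/\log(n)$: one gets $\mathrm{Var}(S_n(\hat\varphi_k))\le 3n^2D_k^{-1}\mathrm{Var}(g_k)\le 3C\,n^2\,4^{(2-2\alpha)k}/k$, and summing over $k\ge\frac{1}{2\alpha}\log(n)+1$ yields $\mathrm{Var}(V_n(\hat\varphi))\lesssim n^{2/\alpha}/\log(n)$. Since $\int\hat\varphi_k\,dm=\int f_k\,dm$, the random variable $V_n(\hat\varphi)=\sum_{k=\frac{1}{2\alpha}\log(n)+1}^{\infty}S_n(\hat\varphi_k-\int\hat\varphi_k\,dm)$ is centered, so Chebyshev's inequality gives $n^{-1/\alpha}V_n(\hat\varphi)\to0$ in probability, exactly as in Corollary~\ref{cor: a>>1 phi}.

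I do not anticipate any genuine difficulty here; the only point that requires thought rather than copying is the index check in the second paragraph — verifying that the translates of $g_k$ used by $S_n(\hat\varphi_k)$ stay inside the window $\{1,\dots,4^{k^2}\}$ on which Corollary~\ref{cor: weak Sinai} supplies the i.i.d.\ structure. For $f_k$ the original argument used the window up to $2\cdot4^{k^2}$, so here the inequality $2D_k\le4^{k^2}$, valid for $k$ large and hence throughout the range of summation once $n$ is large, is precisely what makes the $g_k$ version go through.
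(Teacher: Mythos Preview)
Your proposal is correct and follows exactly the approach the paper takes: the paper simply observes that $\varphi_k$ and $\hat\varphi_k$ are equally distributed and says the result follows ``by mimicking the proof of Lemma~\ref{lem: a>>1 phi} and Corollary~\ref{cor: a>>1 phi}''. Your only addition is the explicit index check that the translates $g_k\circ T^j$ appearing in $S_n(\hat\varphi_k)$ lie within the window $\{0,\dots,4^{k^2}-1\}$ guaranteed by Corollary~\ref{cor: weak Sinai}; this is precisely the point the paper leaves implicit, and your verification that $2D_k\le 4^{k^2}$ for the relevant range of $k$ settles it.
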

Next we have the following analogue of Lemma \ref{lem: beta general 1}.

\begin{lemma}\label{lem: beta general 1 a>1}
	$ $
	\begin{itemize}
		\item[(a)] $\frac{1}{n^{1/\alpha}}\left(S_n^{(\mathbf{S})}(h),S_n^{(\mathbf{S})}\big(\hat{h}\big)\right)$ converges in probability to $(0,0)$. 
		\item[(b)] $\frac{1}{n^{1/\alpha}}\left(S_n^{(\mathbf{M})}(f)+B_n,S_n^{(\mathbf{M})}(g)+B_n\right)$ converges in distribution to $(W,W')$ where $W,W'$ are independent $S_\alpha\left(\sqrt[\alpha]{\ln(2)},1,0\right)$ random variables. 
		\item[(c)]$\frac{1}{n^{1/\alpha}}\left(S_n^{(\mathbf{L})}(f),S_n^{(\mathbf{L})}(g)\right)$ converges in probability to $(0,0)$. 
	\end{itemize}
\end{lemma}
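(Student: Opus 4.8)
The plan is to dispatch (a) and (c) as routine upgrades of the one--coordinate estimates already in hand, and to isolate the only genuine content, the independence of the two limiting stable laws, in (b).

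\emph{Parts (a) and (c).} Since convergence in probability of a random vector to a constant is equivalent to convergence in probability of each coordinate, I would treat the coordinates separately. The $h$-- and $f$--coordinates are exactly Lemma~\ref{lem: a>1 small} (for $n^{-1/\alpha}S_n^{(\mathbf S)}(h)$) and Lemma~\ref{lem: a>>1 large} (for $n^{-1/\alpha}S_n^{(\mathbf L)}(f)$). For the $\hat h$-- and $g$--coordinates I would rerun those two proofs verbatim with $g_k$ replacing $f_k$ and $\hat h_k=g_k-\hat{\varphi}_k$ replacing $h_k$: the arguments use only that $f_k\eqd Z_k(1)$, the variance bound $\mathrm{Var}(f_k)\le C4^{(2-\alpha)k}/k$ of Claim~\ref{clm: varf}, and the fact that the partial sums that occur are, level by level, sums of i.i.d.\ random variables — and all three persist for $g_k=f_k\circ T^{d_k}$ by the invariance of $m$ and Corollary~\ref{cor: weak Sinai}. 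This yields $n^{-1/\alpha}S_n^{(\mathbf S)}(\hat h)\to0$ and $n^{-1/\alpha}S_n^{(\mathbf L)}(g)\to0$ in probability, and combining with the $h$--, $f$--coordinates gives (a) and (c).

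\emph{Part (b).} That each coordinate converges in distribution to an $S_\alpha(\sqrt[\alpha]{\ln 2},1,0)$ random variable is Proposition~\ref{prop: middle a>1}, applied once to $(f_k)$ and once to $(g_k)$ — the second time by the identical proof, since in the range of indices that enters, $(g_k)_k$ has the same joint law as $(f_k)_k$. The new ingredient is independence of the two limits, and I would obtain it from an exact independence at finite $n$: for all $n$ so large that $4^{k^2}>n$ for every $k$ with $\tfrac1{2\alpha}\log n<k\le\tfrac1\alpha\log n$ — which holds eventually, as the smallest such $k$ is already of order $\log n$ and then $4^{k^2}$ exceeds every fixed power of $n$ — the random variable $S_n^{(\mathbf M)}(f)$ is a measurable function of $\{f_k\circ T^{j}:\tfrac1{2\alpha}\log n<k\le\tfrac1\alpha\log n,\ 0\le j\le n-1\}$ while $S_n^{(\mathbf M)}(g)$ is a measurable function of $\{g_k\circ T^{j}:\tfrac1{2\alpha}\log n<k\le\tfrac1\alpha\log n,\ 0\le j\le n-1\}$; by Corollary~\ref{cor: weak Sinai} these two families, being sub-families of $(f_k\circ T^{j-1})_{k\in\N,\,1\le j\le4^{k^2}}$ and $(g_k\circ T^{j-1})_{k\in\N,\,1\le j\le4^{k^2}}$ respectively, are independent, hence so are $S_n^{(\mathbf M)}(f)$ and $S_n^{(\mathbf M)}(g)$ and the centred, normalised versions appearing in the statement. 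It then only remains to invoke the elementary fact that if $A_n,B_n$ are independent for each $n$ with $A_n\Rightarrow^d A$ and $B_n\Rightarrow^d B$, then $(A_n,B_n)\Rightarrow^d(A',B')$ with $A',B'$ independent, $A'\eqd A$ and $B'\eqd B$ — immediate from $\E e^{i(sA_n+tB_n)}=\E e^{isA_n}\,\E e^{itB_n}\to\E e^{isA}\,\E e^{itB}$ together with L\'evy's continuity theorem — applied to the two coordinates of the statement.

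\emph{Expected obstacle.} There is no analytic difficulty; the work is bookkeeping plus the results already established. The one step that genuinely needs care is the independence claim in (b): one must pin down precisely which entries of the triangular array $S_n^{(\mathbf M)}(f)$ and $S_n^{(\mathbf M)}(g)$ depend on, and check that for $n$ large these entries lie within the range in which Corollary~\ref{cor: weak Sinai} asserts the $f$--array and the $g$--array to be independent.
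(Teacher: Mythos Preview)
Your proposal is correct and follows essentially the same approach as the paper's proof: parts (a) and (c) are obtained by rerunning Lemmas~\ref{lem: a>1 small} and~\ref{lem: a>>1 large} with $g_k,\hat h_k$ in place of $f_k,h_k$ (the paper phrases this as ``$h_k$ and $\hat h_k$ are equally distributed, so mimic the proof''), and part (b) is deduced from Proposition~\ref{prop: middle a>1} together with the independence of $S_n^{(\mathbf M)}(f)$ and $S_n^{(\mathbf M)}(g)$. Your treatment of (b) is in fact more explicit than the paper's one-line justification, since you spell out why the two middle sums are independent for large $n$ (the entries used lie inside the range guaranteed by Corollary~\ref{cor: weak Sinai}) and why joint convergence follows from marginal convergence plus independence.
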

\begin{proof}
As for all $k$, $h_k$ and $\hat{h}_k$ are equally distributed, by mimicking the proof of Lemma \ref{lem: a>1 small} one proves that 
\[
\frac{1}{n^{1/\alpha}}S_n^{(\mathbf{S})}\left(\hat{h}\right)\xrightarrow[n\to\infty]{}0\ \text{in probability}.
\]
Part (a) follows from this and Lemma \ref{lem: a>1 small}.

The deduction of part (c) from Lemma \ref{lem: a>>1 large} and its proof is similar. 

Part (b) follows from Proposition \ref{prop: middle a>1} as $S_n^{(\mathbf{M})}(f)$ and $S_n^{(\mathbf{M})}(g)$ are independent and equally distributed. 
\end{proof}
Now fix $\beta\in [-1,1]$ and recall that  $H=\Phi_{\beta}\left(h,\hat{h}\right)$ where $\Phi_\beta$ is the linear function defined by for all $x,y\in\R$, 
\[
\Phi_\beta(x,y):=\left(\frac{\beta+1}{2}\right)^{1/\alpha}x-\left(\frac{1-\beta}{2}\right)^{1/\alpha}y
\]
\begin{proof}[Proof of Theorem \ref{thm: CLT a>1}]
Writing, 
\[
A_n:=\frac{1}{n^{1/\alpha}}\left(S_n^{(\mathbf{S})}(h)+V_n(\varphi)+S_n^{(\mathbf{L})}(f),S_n^{(\mathbf{S})}\big(\hat{h}\big)+V_n\big(\hat{\varphi}\big)+S_n^{(\mathbf{L})}(g)\right),
\]	
we have for all $n\in \N$,
\begin{equation}\label{eq: a>>1 final}
S_n(H)=\Phi_\beta\big(A_n\big)+\Phi_\beta\left(n^{-1/\alpha}S_n^{(\mathbf{M})}(f), n^{-1/\alpha}S_n^{(\mathbf{M})}(g)\right).
\end{equation}
By Lemma \ref{lem: a>>1 phi'} and parts parts (a) and (c) of Lemma \ref{lem: beta general 1 a>1} , $A_n\to (0,0)$ in probability. Since $\Phi_\beta$ is continuous with $\Phi_\beta(0,0)=0$, it follows that  $\Phi_\beta\big(A_n\big)$ converges to $0$ in probability as $n\to\infty$. 

By Lemma \ref{lem: beta general 1 a>1}.(b) and the continuous mapping theorem, 
\[
\Phi_\beta\left(n^{-1/\alpha}S_n^{(\mathbf{M})}(f), n^{-1/\alpha}S_n^{(\mathbf{M})}(g)\right)\Rightarrow^d \Phi_\beta(W,W'),
\]
where $W,W'$ are independent $S_\alpha(\sqrt[\alpha]{\ln(2)},1,0)$ distributed random variables. By \cite[Property 1.2.13]{MR1280932}, $\Phi_\beta(W,W')$ is $S_\alpha(\sqrt[\alpha]{\ln(2)},\beta,0)$ distributed. 

The conclusion now follows from \eqref{eq: a>>1 final} and the converging together lemma.
\end{proof}	
\section{Appendix}
The following tail bound follows easily from  \cite[Property 1.2.15]{MR1280932}
\begin{proposition}\label{prop: ST}
	There exists $C>0$ such that if $Y$ is $S_\alpha(\sigma,1,0)$ distributed with $0<\sigma\leq1$ and $K>1$ then 
	\[
	\P\left(Y>K\right)\leq C\sigma^\alpha K^{-\alpha}.
	\]
\end{proposition}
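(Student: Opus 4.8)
The plan is to upgrade the pointwise tail asymptotic of \cite[Property 1.2.15]{MR1280932} into a bound that is uniform in the dispersion parameter $\sigma\in(0,1]$ and in $K>1$; the device that removes the dependence on $\sigma$ is the scaling property of stable laws.

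First I would normalise the dispersion parameter to $1$. If $\alpha\neq 1$, the scaling property \cite[Property 1.2.3]{MR1280932} gives that $Y/\sigma$ is $S_\alpha(1,1,0)$ distributed; writing $Z$ for such a random variable, $\P(Y>K)=\P(Z>K/\sigma)$, and $K/\sigma\geq K>1$ since $0<\sigma\leq 1$. If $\alpha=1$ the scaling property instead produces an additional shift proportional to $-\sigma\ln\sigma$, which is nonnegative because $\sigma\leq 1$; hence $Y$ is stochastically dominated by $\sigma Z$ and again $\P(Y>K)\leq\P(\sigma Z>K)=\P(Z>K/\sigma)$ with $K/\sigma>1$. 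So in all cases it suffices to exhibit a constant $C=C(\alpha)>0$ with $\P(Z>\lambda)\leq C\lambda^{-\alpha}$ for every $\lambda>1$, because then $\P(Y>K)\leq\P(Z>K/\sigma)\leq C(K/\sigma)^{-\alpha}=C\sigma^{\alpha}K^{-\alpha}$.

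For the remaining bound I would set $g(\lambda):=\lambda^{\alpha}\,\P(Z>\lambda)$ on $[1,\infty)$. Since stable distributions are absolutely continuous, $\lambda\mapsto\P(Z>\lambda)$ is continuous, so $g$ is continuous on $[1,\infty)$; and by \cite[Property 1.2.15]{MR1280932}, applied with skewness $\beta=1$ (so the prefactor $\tfrac{1+\beta}{2}$ equals $1$), $\lim_{\lambda\to\infty}g(\lambda)$ exists and is finite. A continuous function on $[1,\infty)$ that converges at $+\infty$ is bounded, and any such bound serves as $C$, completing the argument.

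The argument is essentially routine; the only point requiring care is that \cite[Property 1.2.15]{MR1280932} is a priori just an asymptotic as $\lambda\to\infty$ for a single fixed distribution, whereas the statement demands an estimate holding for \emph{every} $K>1$ and uniformly over $\sigma\in(0,1]$. The scaling step supplies the uniformity in $\sigma$, the continuity observation upgrades the asymptotic to a bound valid on all of $[1,\infty)$ rather than only near infinity, and the $\alpha=1$ case costs only the small extra remark about the shift that appears under scaling.
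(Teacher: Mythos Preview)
Your argument is correct and is precisely the intended elaboration: the paper gives no proof beyond the remark that the bound ``follows easily from \cite[Property 1.2.15]{MR1280932}'', and your scaling reduction to a single $S_\alpha(1,1,0)$ variable together with the continuity-plus-limit argument is exactly how one makes that remark rigorous. The handling of the $\alpha=1$ shift is also fine, though in the paper the proposition is only invoked for a fixed $\alpha\in(0,2)$, so the constant may depend on $\alpha$ and the case distinction is harmless.
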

In a similar way to the appendix in \cite{2022arXiv221103448K},the tail bound implies the following two estimates on moments of truncated $S_\alpha(\sigma,1,0)$ random variables. 
\begin{corollary}\label{cor: moments}
	For every $r>\alpha$, there exists $C>0$ such that if $Y$ is $S_\alpha(\sigma,1,0)$ distributed with $0<\sigma\leq1$ and $K>1$,
	\[
	\mathbb{E}\left(Y^r1_{[0\leq Y\leq K]}\right)\leq C\sigma^\alpha K^{r-\alpha}. 
	\] 
\end{corollary}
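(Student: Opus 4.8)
The plan is to bound the truncated moment by tail integration and then feed in the tail estimate of Proposition \ref{prop: ST}. The first step is the layer-cake identity: for $Y\sim S_\alpha(\sigma,1,0)$ and $K>1$,
\[
\E\big(Y^r1_{[0\le Y\le K]}\big)=\int_0^K rt^{r-1}\,\P(t<Y\le K)\,dt\le\int_0^K rt^{r-1}\,\P(Y>t)\,dt,
\]
which holds for every real $r>0$; only the restriction to $\{Y\ge0\}$ is used and no further sign information about $Y$ is needed.

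The essential point is that I need the tail bound $\P(Y>t)\le C_1\sigma^\alpha t^{-\alpha}$ for \emph{all} $t>0$, rather than only for $t>1$ as stated in Proposition \ref{prop: ST}; using the trivial bound $\P(Y>t)\le1$ on $(0,1]$ would throw away the factor $\sigma^\alpha$, which is fatal when $\sigma$ is small. For $t>1$ this is Proposition \ref{prop: ST}. For $t\le\sigma$ it is immediate, since then $C_1\sigma^\alpha t^{-\alpha}\ge1\ge\P(Y>t)$ once $C_1\ge1$. For $\sigma<t<1$ I would invoke the self-similarity relation $Y\eqd\sigma Z$ with $Z\sim S_\alpha(1,1,0)$, so that $\P(Y>t)=\P(Z>t/\sigma)\le C(t/\sigma)^{-\alpha}=C\sigma^\alpha t^{-\alpha}$ by Proposition \ref{prop: ST} applied to $Z$, since $t/\sigma>1$ in this range. (When $\alpha=1$ the scaling picks up an extra deterministic shift, so this middle step needs a minor modification; but the corollary is only applied for $\alpha\in(0,1)$ in the sequel.)

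With this uniform tail bound in hand I would split $\int_0^K=\int_0^\sigma+\int_\sigma^1+\int_1^K$ and estimate each piece. On $(0,\sigma)$: $\int_0^\sigma rt^{r-1}\,dt=\sigma^r\le\sigma^\alpha\le\sigma^\alpha K^{r-\alpha}$. On $(\sigma,1)$: $\int_\sigma^1 rt^{r-1}\,C_1\sigma^\alpha t^{-\alpha}\,dt\le C_1 r\sigma^\alpha\int_0^1 t^{r-1-\alpha}\,dt=\frac{C_1 r}{r-\alpha}\,\sigma^\alpha$, where finiteness of the last integral is exactly the hypothesis $r>\alpha$. On $(1,K)$: $\int_1^K rt^{r-1}\,C_1\sigma^\alpha t^{-\alpha}\,dt=\frac{C_1 r}{r-\alpha}\,\sigma^\alpha\big(K^{r-\alpha}-1\big)\le\frac{C_1 r}{r-\alpha}\,\sigma^\alpha K^{r-\alpha}$. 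Summing the three bounds and using $K^{r-\alpha}\ge1$ (as $K>1$ and $r>\alpha$) gives $\E(Y^r1_{[0\le Y\le K]})\le C\sigma^\alpha K^{r-\alpha}$ with $C$ depending only on $\alpha$ and $r$.

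I do not anticipate a genuine obstacle here: the only substantive point is the observation that the tail bound of Proposition \ref{prop: ST} can be made uniform in $t>0$ — the two added ranges being handled by the trivial estimate for $t\le\sigma$ and by self-similarity for $\sigma<t<1$ — after which the claim reduces to the elementary integral splitting above.
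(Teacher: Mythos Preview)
Your argument is essentially the paper's---layer cake followed by the tail bound of Proposition~\ref{prop: ST}---but you are more careful: the paper simply uses $\P(Y>u)\le1$ on $(0,1)$ and arrives at $r+C\sigma^\alpha r\int_1^K u^{r-1-\alpha}\,du$, which is not literally $\le C'\sigma^\alpha K^{r-\alpha}$ when $\sigma$ is small and $K$ is close to $1$. Your extension of the tail bound to all $t>0$ via $Y\eqd\sigma Z$ closes this gap and delivers the statement as written; in the paper's applications one always has $\sigma_k^\alpha K^{r-\alpha}\to\infty$, so the additive constant is harmless there, but your version is the cleaner one. One minor point: the corollary is also invoked (through Proposition~\ref{prop: second moment for a>=1}) for $\alpha\ge1$, so your $\alpha=1$ caveat is not purely academic; it is easily handled, since for $\alpha=1$ the scaling reads $Y\eqd\sigma Z+\tfrac{2}{\pi}\sigma\ln\sigma$ with the shift lying in $[-\tfrac{2}{\pi e},0]$ for $\sigma\in(0,1]$, which only improves the tail estimate.
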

\begin{proof}
	The bound follows from
	\begin{align*}
	\mathbb{E}\left(Y^r1_{[0\leq Y\leq K]}\right)&=\int_{0}^\infty \P\left(Y^r1_{[0\leq Y\leq K]}>t\right)dt\\
	&=	\int_{0}^{K^r} \P\left(Y>t^{1/r}\right)dt\\
	&=r\int_{0}^{K} u^{r-1}\P\left(Y>u\right)du\\
	&=r\int_{0}^{1} u^{r-1}\P\left(Y>u\right)du+r\int_{1}^{K} u^{r-1}\P\left(Y>u\right)du\\
	&\leq r+C\sigma^\alpha r\int_{1}^Ku^{r-1-\alpha}du. 
\end{align*}
Here the last inequality follows from Proposition \ref{prop: ST}. 
\end{proof}
\begin{corollary}\label{cor: moments below a}
For every $r<\alpha$, there exists $C>0$ such that if $Y$ is $S_\alpha(\sigma,1,0)$ distributed with $0<\sigma\leq1$as $K\to\infty$,
\[
\mathbb{E}\left(Y^r1_{[Y\geq K]}\right)\lesssim C\sigma^\alpha K^{r-\alpha}. 
\] 
\end{corollary}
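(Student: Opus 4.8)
The plan is to follow the same layer-cake argument used in the proof of Corollary \ref{cor: moments}, but now integrating a tail that is truncated from below rather than from above. First I would observe that on the event $[Y\ge K]$ with $K>1$ the variable $Y$ is strictly positive, so $Z:=Y^r\mathbf{1}_{[Y\ge K]}$ is a genuine nonnegative random variable (this is the only place where one needs to be slightly careful, since for $\alpha\ge 1$ an $S_\alpha(\sigma,1,0)$ variable can take negative values), and the formula $\E(Z)=\int_0^\infty \P(Z>t)\,dt$ applies.

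Next I would split the tail integral at $t=K^r$. For $0\le t\le K^r$ one has $t^{1/r}\le K$, so the event $[Z>t]$ coincides with $[Y\ge K]$, contributing $K^r\,\P(Y\ge K)$; for $t>K^r$ it coincides with $[Y>t^{1/r}]$, and after the substitution $u=t^{1/r}$, $dt=ru^{r-1}\,du$, this piece becomes $r\int_K^\infty u^{r-1}\P(Y>u)\,du$. Both pieces are then controlled by Proposition \ref{prop: ST}: since a stable law is absolutely continuous, $\P(Y\ge K)=\P(Y>K)\le C\sigma^\alpha K^{-\alpha}$, so the first term is at most $C\sigma^\alpha K^{r-\alpha}$, while the second is at most $Cr\sigma^\alpha\int_K^\infty u^{r-1-\alpha}\,du$.

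Finally, because $r<\alpha$ we have $r-1-\alpha<-1$, so the remaining integral converges and equals $K^{r-\alpha}/(\alpha-r)$; this is exactly the step where the hypothesis $r<\alpha$ is used. Adding the two contributions gives $\E\big(Y^r\mathbf{1}_{[Y\ge K]}\big)\le \frac{\alpha}{\alpha-r}\,C\sigma^\alpha K^{r-\alpha}$, which is the asserted bound with a new constant. I do not expect any genuine obstacle: the argument is a routine reprise of the proof of Corollary \ref{cor: moments}, the only subtleties being the well-definedness of $Y^r$ on $[Y\ge K]$ and the convergence of the lower-truncated integral.
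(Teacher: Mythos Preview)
Your argument is correct and is precisely the analogue of the proof of Corollary~\ref{cor: moments} that the paper has in mind when it says the proof is ``similar'': the same layer-cake formula and the same substitution $u=t^{1/r}$, with the only change being that the integral now runs over $[K,\infty)$ and converges because $r<\alpha$. The paper does not spell out these details, so your write-up in fact fills in exactly what was left to the reader.
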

The proof of Corollary \ref{cor: moments below a} is similar to the proof of Corollary \ref{cor: moments}. The following is important in the proofs of Theorems \ref{thm: a>1} and \ref{thm: CLT a>1}.
\begin{proposition}\label{prop: second moment for a>=1}
For every $K,\sigma>0$, if $X$ is $S_\alpha(\sigma,1,0)$ distributed then $X1_{X<K}$ is square integrable. Furthermore,	there exists $C>0$ such that for every $X$ an $S_\alpha(\sigma,1,0)$ random variable with $0<\sigma\leq 1$ and $K>1$, 
\[
\E\left(\left(X \mathbf{1}_{[X<K]}\right)^2\right)\leq C\sigma^\alpha K^{2-\alpha}.
\]   
\end{proposition}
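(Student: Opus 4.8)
The plan is to compute $\E\big((X\mathbf 1_{[X<K]})^2\big)$ by the layer-cake (distribution function) formula, exactly as in the proof of Corollary \ref{cor: moments}, but now tracking both tails of the truncated variable. Write $\E\big((X\mathbf 1_{[X<K]})^2\big)=\int_0^\infty \P\big((X\mathbf 1_{[X<K]})^2>t\big)\,dt$. For the positive part $X>0$ one gets, after the substitution $t=u^2$, a contribution $2\int_0^K u\,\P(X>u)\,du$, which splits as $2\int_0^1 u\,\P(X>u)\,du+2\int_1^K u\,\P(X>u)\,du\le 1+2C\sigma^\alpha\int_1^K u^{1-\alpha}\,du$ by Proposition \ref{prop: ST}. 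Since $\alpha<2$, $\int_1^K u^{1-\alpha}\,du\le \tfrac{1}{2-\alpha}K^{2-\alpha}$, so this part is $\lesssim \sigma^\alpha K^{2-\alpha}$ (using $\sigma\le 1$, $K>1$ to absorb the additive constant). This already shows finiteness of the second moment of $X\mathbf 1_{[X<K]}$ restricted to $X\ge 0$.

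The remaining point is the negative part of $X$, i.e. the event $X<0$ (which is included in $[X<K]$), where the truncation does nothing and we need genuine square-integrability of $X^-=\max(-X,0)$. For a totally skewed-to-the-right stable law $S_\alpha(\sigma,1,0)$ the left tail is well known to be very light: by \cite[Property 1.2.15]{MR1280932} (the same property cited for Proposition \ref{prop: ST}), $\P(X<-u)$ decays faster than any power of $u$ as $u\to\infty$ — in fact exponentially when $\alpha\in(1,2)$ and it is identically $0$ when $\alpha\in(0,1]$ (the variable is then a.s. positive, up to a shift which is zero here). Hence $\int_0^\infty u\,\P(X<-u)\,du<\infty$, and moreover this quantity is bounded by a constant depending only on $\alpha$; combined with $\sigma\le 1<\sigma^{\alpha}K^{2-\alpha}/\sigma^{\alpha}$... more cleanly: since $\sigma^\alpha K^{2-\alpha}\ge 1$ under our hypotheses, any absolute constant can be absorbed into $C\sigma^\alpha K^{2-\alpha}$. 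One should be a little careful that \cite[Property 1.2.15]{MR1280932} gives tail asymptotics for a fixed $\sigma$; to get a bound uniform in $\sigma\le 1$ one writes $X=\sigma Z$ with $Z\sim S_\alpha(1,1,0)$, so $\E\big((X^-)^2\big)=\sigma^2\E\big((Z^-)^2\big)\le \sigma^\alpha\E\big((Z^-)^2\big)$ (as $\sigma\le 1$ and $\alpha\le 2$), and $\E\big((Z^-)^2\big)=:c_\alpha'<\infty$ is an absolute constant.

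Putting the two pieces together: rescale $X=\sigma Z$, bound the positive truncated part by $\sigma^\alpha\big(c_\alpha K^{2-\alpha}+c_\alpha'\big)$ and the negative part by $\sigma^\alpha c_\alpha''$, and then use $K>1$, $2-\alpha>0$ to write all the additive constants as $\le c_\alpha''' K^{2-\alpha}$. This yields $\E\big((X\mathbf 1_{[X<K]})^2\big)\le C\sigma^\alpha K^{2-\alpha}$ with $C$ depending only on $\alpha$, and the first (qualitative) assertion follows a fortiori for arbitrary $\sigma>0$ by the same scaling. The only mild obstacle I anticipate is the book-keeping around the left tail — ensuring the bound is uniform over $\sigma\le1$ and distinguishing the cases $\alpha\le 1$ (where $X\ge 0$ a.s., so there is nothing to do) and $\alpha\in(1,2)$ (where one invokes the light left tail) — but this is exactly parallel to the estimates already used for Proposition \ref{prop: ST} and Corollary \ref{cor: moments}.
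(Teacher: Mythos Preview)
Your approach is essentially the same as the paper's: split $X\mathbf 1_{[X<K]}$ into its positive and negative parts, bound the positive part via Corollary~\ref{cor: moments} (which you unroll via layer-cake), and handle the negative part by scaling $X=\sigma Z$ and invoking the light left tail of a totally right-skewed stable law. Two small corrections: the left-tail decay for $\beta=1$ is \emph{not} Property~1.2.15 of \cite{MR1280932} (that is the right-tail asymptotic); the paper instead cites \cite[Theorems~2.5.3--2.5.4]{MR0854867} and equations~(1.2.11)--(1.2.12) of \cite{MR1280932}, and your parenthetical that $X\ge 0$ a.s.\ when $\alpha=1$ is false---the support is all of $\R$ there, though the left tail is still super-polynomially light, so your argument goes through unchanged.
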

\begin{proof}
 Let $Y$ be an $S_\alpha(1,1,0)$ random variable and note that $\sigma Y$ and $X$ are equally distributed. By \cite[Theorems 2.5.3 and 2.5.4]{MR0854867} see also equations (1.2.11) and (1.2.12) in \cite{MR1280932}, $\P(Y<-\lambda)$ decays faster than any polynomial as $\lambda\to\infty$.  This implies that $Y1_{Y<0}$ has moments of all orders and
 \[
 \E\left(\left(X1_{[X<0]}\right)^2\right)=\sigma^2\E\left(Y^21_{[Y<0]}\right)\leq D,
 \]
 where $D=\E\left(Y^21_{[Y<0]}\right)$. 
Now by this and Corollary \ref{cor: moments}, we have
\begin{align*}
\E\left(\left(X \mathbf{1}_{[X<K]}\right)^2\right)&\leq 4\left(\E\left(\left(X \mathbf{1}_{[0<X<K]}\right)^2\right)+\E\left(\left(X \mathbf{1}_{[X<0]}\right)^2\right)\right)\\
&\leq 4\left(C\sigma^\alpha K^{2-\alpha}+D\right)\sim 4C\sigma^\alpha K^{2-\alpha},\ \ \text{as}\ K\to\infty. 
\end{align*}
The claim follows from this upper bound. 
\end{proof}


\bibliographystyle{abbrv}
\bibliography{embedding.bib}

\end{document}